\newtheorem{theorem}{Theorem}
\newtheorem{proposition}{Proposition}
\newtheorem{lemma}{Lemma}
\newtheorem{remark}{Remark}
\newtheorem{discussion}{Discussion}
\newtheorem{assumption}{Assumption}
\newtheorem{algorithm}{Algorithm}
\def\R{\mathbb{R}}
\def\N{\mathbb{N}}
\def\Lc{\mathcal{L}}
\def\Uc{\mathcal{U}}
\def\Ucs{{\mathcal{U}^*}}
\def\Vc{\mathcal{V}}
\def\Yc{\mathcal{Y}}
\def\Ft{\prescript{}{\theta}{\mathbf{F}}}
\def\Cfu{C_{f_u}}
\def\Ldf{L_{\nabla f}}
\def\jmax{\widetilde{j}^*}
\def\etildk{e_{j}^\delta}
\def\epk{\epsilon_{K(j)}}
\def\thedag{\theta^\dagger}
\def\thej{\theta^{\delta,j}}
\def\thejn{\theta^{\delta,j+1}}
\def\thekj{\widetilde{\theta}^{\delta,j}}
\def\thekjn{\widetilde{\theta}^{\delta,j+1}}
\def\thekjbar{\widetilde{\theta}^{\delta,\bar{j}}}
\def\thekjstar{\widetilde{\theta}^{\delta,\jmax(\delta)}}
\def\thekjsub{\widetilde{\theta}^{\delta_n,\jmax(\delta_n)}}
\def\uk{u_k}
\def\ukn{u_{k+1}}
\def\ztil{\widetilde{z}}
\def\Stil{\widetilde{S}_{K(j)}}
\def\ydel{y^\delta}
\def\Bthe{B_R(\theta^\dagger)}
\def\Bu{B_r(u^\dagger)}
\def\Buh{B_{r/2}(u^\dagger)}
\def\But{\prescript{}{\theta}{B}_{r/2}(u^*)}
\def\({\left(}
\def\){\right)}
\newcommand{\wto}{\rightharpoonup}
\newcommand{\embed}{\hookrightarrow}
\newcommand{\highlight}[1]{#1}
\def\R{\mathbb{R}}
\def\mv{\mathbf{m}}
\def\N{\mathbb{N}}
\def\m{\textbf{m}}
\def\h{\textbf{h}}
\def\Om2R{\Omega,\R^3}
\newcommand{\inter}{\mathop{\mathrm{int}}}
\title{Bi-level iterative regularization for inverse problems in nonlinear PDEs}
\author{Tram Thi Ngoc Nguyen\\[0.5ex] \normalsize{nguyen@mps.mpg.de}\\\normalsize{MPI Solar Systems Research - Fellow Group Inverse Problems}}
\date{ }
\begin{document}

\maketitle

\paragraph{Abstract.}We investigate the ill-posed inverse problem of recovering unknown spatially dependent parameters in nonlinear evolution PDEs. We propose a bi-level Landweber scheme, where the upper-level parameter reconstruction embeds a lower-level state approximation. This can be seen as combining the classical reduced setting and the newer all-at-once setting, allowing us to, respectively, utilize well-posedness of the parameter-to-state map, and to bypass having to solve nonlinear PDEs exactly. Using this, we derive stopping rules for lower- and upper-level iterations and convergence of the bi-level method. %Our bi-level convergence analysis can also be well-integrated to other inexact PDE solving methods in the lower-level, offering an overall error analysis. 
We discuss application to parameter identification for the Landau-Lifshitz-Gilbert equation in magnetic particle imaging.\\[2ex]
\emph{Key words:} parameter identification, bi-level approach, Landweber method, tangential cone condition, stability estimate, Landau-Lifshitz-Gilbert equation, magnetic particle imaging.

%\tableofcontents

\section{Parameter identification}
\subsection{Introduction}\label{sec:intro}

We investigate the inverse problem of recovering the spatially dependent parameter $\theta$ in the nonlinear evolution system
\begin{equation}\label{original-pde}
\begin{split}
&\dot{u}(t)+f(t,\theta,u(t))=0\qquad t\in(0,T)\\
&u(0)=A\theta.
\end{split}
\end{equation}
with nonlinear model $f$ and linear initial condition $A$; here, $\dot{u}$ denotes time derivative of $u$. The parameter identification is carried out with additional linear measurement of the state $u$
\begin{align}\label{original-measure}
y=Lu,
\end{align}
which is usually contaminated by noise of some level $\delta$, resulting in $y^\delta$ with $\|y^\delta-y\|\leq\delta$.

The parameter identification \eqref{original-pde}-\eqref{original-measure} can be formulated in a reduced setting with the parameter-to-observation map $G$ as the forward operator
\begin{align}\label{original-red-forward}
G(\theta)=y\qquad{\text{with}}\qquad G=L\circ S:X\to\Yc,  \quad \theta\mapsto y.
\end{align}
Essentially, $S$ is a well-defined parameter-to-solution map whose output is the unique solution of the nonlinear PDE \eqref{original-pde}
\begin{align}\label{original-Smap}
S:X\to \Vc,\quad \theta\mapsto u\quad\text{solving PDE }\eqref{original-pde},
\end{align}
while $L$ is the linear observation operator 
\begin{align}\label{original-red-measure}
L:(\Vc\subseteq)\,\Uc\to\Yc, \quad u\mapsto y.
\end{align}
This abstract setting is the framework considered throughout the paper.

\paragraph{Regularization with state approximation error}Iterative regularization algorithms for solving the inverse problem \eqref{original-red-forward}-\eqref{original-red-measure}, such as Landweber-type or Newton-type methods, require the repeated evaluation of $S(\theta)$ to perform an update for the search parameter. %This evaluation step means a procedure that can solve the underlying PDE \eqref{original-pde} exactly for $u$. However, 
In practice, one invokes some PDE solvers, for instance finite difference (FDM) or finite element methods (FEM), to instead find a numerical solution $\tilde{u}:=\widetilde{S}(\theta)\approx S(\theta)=u$. The task becomes challenging when the PDE model $f$ is nonlinear. Nonlinear solvers frequently involve an application of the fixed point theorem to handle the nonlinearity operation, which results in additional internal iterations. The convergence for numerically approximating $u$ for nonlinear PDEs is rather technical. \cite{alougesKritsikisSteinerToussaint,BanasEtAl,BartelsProhl1, cimrak} are a few examples of FEM solvers for the nonlinear Landau-Lifshitz-Gilbert equation in magnetic particle imaging (more details in Section \ref{sec:mpi}). Approximating $u$ by $\tilde{u}$ introduces additional error to the primary reconstruction process for the parameter $\theta$; as such, it is important that we consider the effect that the propagation of the \emph{state approximation error} $\epsilon:=\|\tilde{u}-u\|$, or the discretization error in case of FEM, has on our recovery of $\theta$. This is the motivating point to this work.

Inspired by this interesting line of research, in particular when the underlying PDE model is nonlinear, we investigate a general framework suitable for inverse coefficient problems. In this spirit, we consider the problem of approximating the solution to the PDE \eqref{original-pde} as another inverse problem strategically embedded into the main ill-posed parameter identification problem. %We study Landweber iteration for in both levels, where in the lower-level it plays the role of an approximation method for the nonlinear PDEs, and in the upper-level it regularizes the ill-posed nature of the inverse coefficient problem.

\paragraph{Introducing the lower-level}Adapting our notation to the framework of iterative methods, we call the iteration for estimating $u$ the lower-level iteration, and the iteration for reconstructing $\theta$ the upper-level iteration. To this end, we first recast the lower level problem as an inverse problem in which the unknown is the state $u$. The forward map, at a given parameter $\theta$, is the PDE model \eqref{original-pde}
\begin{align}\label{original-pde-reform}
F(\theta):\Vc\to \Uc^*\times H \qquad
F(\theta)(u):=(\dot{u}+f(t,\theta,u(t));A\theta-u|_{t=0})
\end{align}
with the nonlinear model $f:X\times\Vc\to \Uc^*$ and the initial condition $A:X\to H$. We aim at minimizing the PDE residual towards zero, that is solving
\begin{align}\label{original-pde-reform-res}
F(\theta)(u)=\varphi:=(0,0)
\end{align}
with the image $\varphi\in\Uc^*\times H$ simply being zero. Seeking a good approximation $\widetilde{S}(\theta)$ for the exact state $S(\theta)$ in \eqref{original-Smap} is equivalent to finding an $\tilde{u}\in\Vc$ such that the PDE residual \eqref{original-pde-reform-res} is sufficiently small. Here, the choice of function space plays an important role and will be discussed in detail later.

The state approximation error $\epsilon:=\|\tilde{u}-u\|$ in the lower-level iteration must be handled with care: Allowing it to be large saves computational time, yet at the same time, it must be kept small enough such that when it enters the upper-level iteration, one can still obtain an acceptable reconstruction result for $\theta$.
%To the end, we derive appropriate stopping rules for both levels by dedicatedly controlling the interacting between the approximation error and noise propagation through the regularization process. This constitutes the key contribution of the paper.

% As the problem is ill-posed, this requires appropriate stopping rules for both levels with respect to noise level and/or noisy data, in order to confirm a vanishing reconstruction error, i.e. convergence of $\theta_\text{iterate}$ to an exact $\theta^\dagger$ when $\delta\to 0$.

\paragraph{Bi-level as a mixed approach} The suggested approach could be seen as a combined version of the reduced formulation \eqref{original-red-forward}-\eqref{original-red-measure} and an \emph{all-at-once} approach. All-at-once formulations is a recent approach introduced in optimization with PDE constraint. %\cite{KunischSachs,KupferSachs,LeibfritzSachs,LeHe16,orozco-ghattas-97b,shenoy-heinkenschloss-cliff-98,Taasan91}. 
This concept was subsequently studied in the context of inverse problems \cite{BurgerMuehlhuberIP,BurgerMuehlhuberSINUM,HaAs01,aao16,KKV14b,LeHe16}, and more recently specifically for time dependent inverse problems \cite{Kaltenbacher:17,Nguyen:19}. %The all-at-once formulation suggests an alternative that bypasses constructing the parameter-to-solution map $S$. 
In this setting, one constructs, instead of the reduced map $G$ \eqref{original-red-forward}, the higher-dimensional all-at-once map $\mathbb{G}$ and
\begin{equation}\label{aao}
\begin{split}
&\text{solve:}\quad \mathbb{G}(\theta,u)=(0,0,y)\\
&\text{with}\quad\mathbb{G}:X\times\Vc\to \Uc^*\times H\times\Yc, \quad \mathbb{G}(\theta,u):=(\dot{u}+f(t,\theta,u(t));A\theta-u|_{t=0};Lu).
\end{split}
\end{equation}
By treating $u$ as an independent unknown, one jointly recovers the parameter and the state $(\theta,u)$, bypassing the step of solving the nonlinear PDE \eqref{original-pde} exactly. This feature makes the all-at-once approach powerful in practice. A comparison between these formulations can be found in \cite{Kaltenbacher:17,Nguyen:19}. 

We remark that the classical reduced approach usually brings in certain structural conditions on the model $f$ to induce unique existence for the PDE \eqref{original-pde}. 
The new all-at-once approach \eqref{aao}, as explained, bypasses the need to study unique existence as well as solving the PDE exactly. 
This raises the question: \highlight{Can one utilize} accessible properties of $f$, and still bypass the exact solution for any nonlinear PDE? This is the viewpoint that we take in this study. In particular, we employ coercivity of the PDE model to ensure convergence of the lower-level iteration. Accordingly, as with the all-at-once approach, no nonlinear PDE needs to be solved exactly. % The approximate $\tilde{u}$ is then integrated into an upper-level iteration for the step of regularized reconstructing the parameter $\theta$. 
Essentially, bi-level as a mixed approach combining the advantages of the reduced setting and the all-at-once setting is the new perspective of this work. %\red{Repeating with end of previous paragraph? Aaa picture about 3 approach}

\paragraph{Connection and contrast to other directions}
The importance of analyzing the finite-dimensional approximation error in an inverse problem framework has been pointed out in \cite{NeubauerScherzer90}; in particular, discretization error in FEM was treated in \cite{Harrach21,Clason_2016,Kaltenbacher_2011,JinZhou,Hinze_2019}.   These stimulating works specialize in inverse problems with linear PDEs, and mainly in the context of variational regularization. Here, we focus on a general framework of inverse coefficient problems with nonlinear PDEs. In addition, our research focuses on Landweber regularization instead of variational regularization. Furthermore, we do not use FEM to approximately solve the PDE, but rather take the perspective of solving the nonlinear PDE as a lower-level inverse problem.

We point to the fact that the bi-level viewpoint has been realized in optimization and is an active research area  on its own \cite{Bard}. There, the main focus is on optimality conditions rather than, as is the case in inverse problems, regularization parameters; in our case, the regularization parameters will take the form of stopping rules for the Landweber iteration. To the best of the author's knowledge, the bi-level Landweber (or gradient descent) iteration has yet to be discussed for inverse problems.
% This forms the core of our work.
%viewpoint bi-level opimization \blue{[ref?? Kurnisch-Gernot?]}
%\[\min_\theta \|Lu(\theta)-y^\delta\|\quad s.t.\quad u(\theta)=\argmin_u \|F(\theta,u)-\varphi\|\]
%}

Interestingly, the idea of approximating the inverse component in a regularized iteration by another iteration can already be found in existing literature. For Newton-type method, \cite{Kaltenbacher97,Hanke97,Rieder99} propose to approximate the inverse component $[\alpha_k\text{Id}+G'(\theta_k)^*G(\theta_k)]^{-1}$ by either Landweber or Conjugate Gradient (CG) inner iteration, forming the so-called Newton-Landweber and Newton-CG methods. Independently, the inner iteration in our work is to approximate $S(\theta_k)=F(\theta_k)^{-1}$ (recalling that $G=L\circ S$), rather than terms related to the derivative $G'$. The outer iteration in our algorithm also employs the Landweber method. Following the established pattern, we might refer to our approach as a \emph{Landweber-Landweber method}.

It is worth remarking on the growing popularity of the model order reduction approach for large-scale inverse problems \cite{Frangos10}. This and our approach align in the idea of solving PDEs inexactly. If one replaces the lower-level by other inexact PDE solvers, our upper-level analysis is still well-integrated. In particular, it suggests an adaptive approximation/discretization estimation, e.g. via increasing degrees of freedom in the solvers, that can ensure convergence of the regularized reconstruction. 

%\blue{Learn operator correction: paper by [Lunz,Carola...20]. Barbara recomend this}\

\paragraph{Contribution}

In a broad sense, we exploit the structure of the inverse coefficient problem $G(\theta)=L\circ S(\theta)$, where in practice, the observation operator $L$ is linear but ill-posed, while the PDE solution map $S$ is nonlinear but well-posed. In addition, we pay attention to nonlinear time-dependent PDEs, a consideration studied relatively infrequently.

The techniques in this study are largely influenced by the seminal works \cite{scherzer95,KalNeuSch08} for the use of the \emph{tangential cone condition} in iterative regularization to handle nonlinearity. Regarding regularization parameter choice technique, we are inspired by the very recent work \cite{Kindermann17}, in which an a priori stopping scheme was introduced, as opposed to a classical posterior stopping rule. Although our proof strategy is motivated by these profound results, there are important differences between these works and our analysis. First, we investigate the Landweber method in a bi-level framework, in comparison to the standard single-level setting. Second, our work is tailored particularly to parameter identification in PDEs, rather than to the abstract formulation $F(x)=y$. Third, from a practical point of view, our bi-level algorithm (Algorithm \ref{algorithm}) is ready to use in real-life applications, as all the ingredients in the algorithm are explicitly expressed.

Concretely, we impose a coercivity assumption borrowed from PDEs theory on the lower Landweber iteration. This yields interesting outcomes such as verification of tangential cone condition (Lemma \ref{coerive-alpha=1}), convergence rate (Theorem \ref{low-conv-rate}) and monotonicity of the residual sequence (Proposition \ref{low-monotone-res}). For the upper Landweber iteration, we carefully control the interaction between the state approximation error and the noise level, and their propagation through the adjoint process and iterative update. As our main result, we provide explicit adaptive stopping rules, both for lower and upper iteration, confirming convergence of the bi-level scheme (Theorems \ref{theo:bi-priorstop}).

\paragraph{Structure} The paper can be outlined as follows: After an introduction of the problem setting in Section \ref{sec:setting}, we present the bi-level algorithm in Section \ref{sec:bi-level}. Section \ref{sec:lowiter} is dedicated to studying the lower-level problem. Section \ref{sec:upper} advances the investigation to the upper-level problem. Section \ref{sec:discus} verifies the proposed assumptions and discusses the application to magnetic particle imaging. Finally, we summarize our findings and future prospects in Section \ref{sec:conclude}.

%%%%%%%%%%%%%%%%%%%%%%%%%%%%%%%%%%%%%%%%%%%
\subsection{Problem setting}\label{sec:setting}

\subsubsection*{Setting}\label{dis-setting}
\begin{itemize}[label=,leftmargin=*]
\item The space of the spatially dependent parameter is $X$, and the data space  is denoted by $\Yc$. Notionally, calligraphic letters indicate spaces that involve the time variable $t$.
\item The state space is $\Uc:=\{u\in L^2(0,T;U)\}$, with $U\embed H\embed U^*$ forming a Gelfand triple. 
\item The \emph{smooth} state space is $\Vc:=\{u\in \Uc:\dot{u}\in \Uc^*\}=\{u\in L^2(0,T;U): \dot{u}\in L^2(0,T;U^*)\}$. 
\item The image space, which the PDE residual belongs to, is $\Uc^*=L^2(0,T;U^*)$. Our choice of $\Vc$ ensures well-definedness and boundedness of $\frac{d}{dt}:\Vc\to\Uc^*$. Unlike the reduced formulation, where $\Uc^*$ does not appear, in the all-at-once setting \eqref{aao} and the combined setting  proposed in \eqref{original-pde-reform}, the PDE image space $\Uc^*$ plays a clear role, guaranteeing well-definedness for  the lower-level problem. 

The state space $\Vc$ must be sufficiently smooth for well-definedness of the PDE \eqref{original-pde} and existence of an adjoint state (Lemma \ref{lem:adjoint}). This function space setting is inspired by the book \cite[Chapter 8]{Roubicek}.
\item
$X$, $\Yc$, $\Uc$, $\Vc$, $\Uc^*$ are all Hilbert spaces, ensuring that our problem remains in a Hilbert space framework.  

\item The parameter-to-state (or solution) map $S: X\ni\theta\mapsto u\in\Vc(\subseteq\Uc)$ is nonlinear and assumed to be Fr\'echet differentiable.
\item The observation (or measurement) operator $L:\Uc\ni u\mapsto y\in\Yc$ is linear and bounded. In real life, $L$ is usually a compact operator; it cannot capture smoothness of $u\in\Vc$ due to discrete or noisy measurements. Its domain $\Uc$ hence does not need to be as smooth as $\Vc$. %supports a convenient and feasible adjoint derivation $G'(\theta)^*=S'(\theta)^*L^*:\Yc\to\Uc\to X$ later (see Proposition \ref{lem:adjoint}). This again coincides with the requirement for $\Uc,\Vc$ argued above. In summary, the criterion for $\Uc$ is that it should be weaker than (or equals) $\Vc$ and supports the adjoint derivation.
\item  The choice of data (or observation) space $\Yc$ depends on how data is acquired, e.g. full measurements, partial measurements or \highlight{snapshots} of $u$; thus we keep $\Yc$ as a general Hilbert space. The supercsript $\delta$ in $y^\delta$ denotes the noise level.
\item  The initial value operator $A:X\to H$ is linear and bounded. The continuous embedding $\Vc\embed C(0,T;H)$ \cite[Section 7.2]{Roubicek} enables the time-wise evaluation of $u$, ensuring well-definedness of the setting $u(t_0)=A\theta\in H$.
\item The PDE model $f:(0,T)\times X\times U\to U^*$ is nonlinear and is assumed to satisfy the Carath\'eodory condition. This means that for each $\theta\in X$, the mapping $f$ is measurable with respect to $t$ for any $u\in U$, and continuous with respect to $u$ for almost every ﬁxed $t\in(0,T)$. In this way, $f$ can be naturally supposed to induce the similarly denoted Nemytskii operator 
\[f:X\times\Vc\to \Uc^* \qquad [f(\theta,u)](t):=f(t,\theta,u(t)).\]
We refer to \cite{Roubicek, Troeltzsch} for details on Carath\'eodory and Nemytskii mappings.
\item Finally, $\Omega$ is a smooth and bounded spatial domain. The time line $[0,T]$ is finite.
\end{itemize}

\subsubsection*{Notation}
\begin{itemize}[label=,leftmargin=*]
\item
We use the capitalized subscript $R$ for constants relating to the upper-level problem (e.g. \eqref{cond-upper-boundedG'}-\ref{cond-upper-tcc}), while the lower-case subscript $r$ is reserved for constants in the lower-level problem (e.g. \eqref{low-ass-derivative}-\eqref{low-ass-tcc}). The subscript $S$ is used in constants relating to the parameter-to-state map $S$ (e.g. \eqref{cond-S'}). Regarding iteration index, the superscript $(\cdot)^j$ denotes the iterates of the upper-level problem, and subscript $(\cdot)_k$ is used for the lower-level problem. 
\item 
$B^*$ denotes the Hilbert space adjoint, and $B^\star$ denotes the Banach space adjoint of a linear, bounded operator $B$. The notation $(\cdot,\cdot)$ means the inner product, and $\langle\cdot,\cdot\rangle$ is the dual paring (c.f. Proposition \ref{lem:adjoint}).
\item
$D_U:U\to U^*$ and $I_X:X^*\to X$ are isomorphisims (c.f. Proposition \ref{lem:adjoint}). As an example, consider $U=X=H^1_0(\Omega)$. In this case, $D_U$ might be a differential operator, e.g. $(\text{Id}-\Delta)$; and $I_X$ could be an integral operator, e.g. $(\text{Id}-\Delta)^{-1}$.
\item 
$\|B\|_{X\to Y}$ denotes the operator norm of $B:X\to Y$. In most of the cases, where it is clear which norms are used due to the definition of the operators, we abbreviate, e.g. $\|L\|:=\|L\|_{\Uc\to\Yc}, \|A\|:=\|A\|_{X\to H}$ etc. Nevertheless, in some computations when technicality is involved, we explicitly write out the norm. We also write various other norms in shortened form, e.g. $\|\cdot\|_{L^2}:=\|\cdot\|_{L^2(\Omega)}$, $\|u\|_{L^2(L^2)}:=\|u\|_{L^2(0,T;L^2(\Omega))}$.
\item The constant $C_{X\to Y}$ indicates the norm of the continuous embedding $X\embed Y$.
\item 
Throughout the paper, we assume that the PDE \eqref{original-pde}-\eqref{original-measure} is solvable, and denote by $\theta^\dagger$ the exact parameter and $u^\dagger$ the corresponding exact solution. $\Bthe\subset X$ is the ball of radius $R$ around the ground truth $\theta^\dagger$; similar interpretation applies for the ball $\Bu\subseteq\Vc$. 
\item
For the lower-level problem in Section \ref{sec:lowiter}, $\But\subseteq\Vc$ denotes the ball of radius $r/2$ centered at $u^*$, where $u^*$ is a solution of $F(\theta)(u)=\varphi$ at a given $\theta$ (see \eqref{low-IP}).

$f'_\theta, f'_u$ are the partial derivatives of $f$, while $\nabla f$ is its gradient.
\end{itemize}

%%%%%%%%%%%%%%%%%%%%%%%%%%%%%%%%%%%%%%%%%%%
\section{Bi-level iteration}\label{sec:bi-level}
\subsection{Bi-level algorithm}\label{sec:algorithm}
We now establish the bi-level Landweber algorithm \ref{algorithm}. The main idea is to first perform upper-level iteration for $\theta$ by the standard Landweber iteration with the exact forward map $G=L\circ S$. We then replace the parameter-to-state map $S$ by an approximation $\Stil$ obtained through a lower-level Landweber iteration. $\Stil$ enters the forward and backward (adjoint) evaluation in the upper-level iteration, yielding a gradient update step for $\theta$. After Algorithm \ref{algorithm} is constructed, we perform some preliminary error analysis in Section \ref{sec:pre-erroranalysis}.\\

First, recalling the set up of the forward map $G(\theta)=L\circ S(\theta)$ in \eqref{original-red-forward}-\eqref{original-red-measure}, the standard Landweber iteration for $\theta$ in this case reads as
\begin{align*}
\thejn = \thej -S'(\thej)^*L^*\big(LS(\thej)-\ydel\big) \qquad j\leq j^*(\delta).
\end{align*}
Replacing $S$ by its approximation $\Stil$, the Landweber iteration with $\Stil$ becomes
\begin{align}
\thekjn = \thekj -S'(\thekj)^*L^*\(L\Stil(\thekj)-\ydel\) \qquad j\leq\jmax(\delta),
\end{align}
where at each $j$, that is, each $\thekj$, the state approximation $\Stil(\thekj):=u^j_{K(j)}$ is the outcome of the lower-level iteration
\begin{align}
u^j_{k+1}=u^j_k-F_u'(\thekj,u^j_k)^*\(F(\thekj,u^j_k)-\varphi\) \qquad k< K(j)
\end{align}
with the PDE map $F(\thekj)$ described in \eqref{original-pde-reform}-\eqref{original-pde-reform-res}.
The stopping rule $K(j)$ for the lower-level iteration as well as $\jmax$ for the upper-level iteration will be the subject of the later sections. 
\\

In order to see how the state approximation enters the adjoint $S'(\thekj)^*$, we first of all derive the adjoint process. %While the Banach space adjoints are more straightforward, the adjoints in Hilbert spaces, which are the desire ones, involves some technicalities. In the following Proposition, we show the resulted Hilbert space adjoint calculated from the Banach space adjoints with the help of some isomorphisms.
\begin{proposition}[Adjoint]\label{lem:adjoint}
Let $D_U:U\to U^*$ and $I_X:X^*\to X$ be isomorphisms.\\
Then, the Hilbert space adjoint of $G'$ with $G$ defined in \eqref{original-red-forward}-\eqref{original-red-measure} is given by
\begin{align}
&G'(\theta)^*:\Yc\to X && G'(\theta)^*= S'(\theta)^*L^* \label{adjoint}\\[1ex]
&L^*:\Yc \to \Uc \nonumber\\ 
&S'(\theta)^*:\Uc\to X && S'(\theta)^*v = -\int_0^T I_Xf'_\theta(\theta,u)(t)^\star z(t)\,dt +I_XA^\star z(0), \label{adjoint-S'}
\end{align}
where $z$ is the solution to the final value problem at source $D_Uv$
\begin{align}\label{low-ass-adjointeq}
\begin{cases}
&-\dot{z}(t)+f'_u(\theta,u)(t)^\star z(t)= D_Uv(t) \qquad  t\in(0,T)\\
&z(T)=0.
\end{cases}
\end{align}
Here, the Banach space adjoints are
\begin{align*}
&f'_u(t,\theta,u(t)):U\to U^*,\qquad  &&f'_\theta(t,\theta,u(t)):X\to U^*, \qquad && A:X\to H=L^2(\Omega), \\
&f'_u(t,\theta,u(t))^\star:U^{**}=U\to U^*,&& f'_\theta(t,\theta,u(t))^\star:U\to X^*, && A^\star:H^*=H\to X^*. 
\end{align*}
\end{proposition}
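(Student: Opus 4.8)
The plan is to combine the classical adjoint-state (Lagrangian) calculation for PDE-constrained problems with the chain rule for Hilbert-space adjoints. First I would identify $S'(\theta)$: since $u=S(\theta)$ solves \eqref{original-pde}, differentiating the state equation in a direction $h\in X$ shows that $w:=S'(\theta)h$ is the solution of the linearized (sensitivity) problem $\dot w+f'_u(\theta,u)(t)\,w=-f'_\theta(\theta,u)(t)\,h$ on $(0,T)$ with $w(0)=Ah$ --- a linear evolution equation whose well-posedness in $\Vc$ is implicit in the assumed Fr\'echet differentiability of $S$, and for which the embedding $\Vc\embed C(0,T;H)$ makes the initial trace $w(0)=Ah$ meaningful. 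Since $L$ is linear and bounded, $G=L\circ S$ gives $G'(\theta)=L\,S'(\theta)$, so taking the Hilbert-space adjoint of this composition of bounded operators yields $G'(\theta)^*=S'(\theta)^*L^*$ with $L^*:\Yc\to\Uc$; this establishes \eqref{adjoint} and reduces the proposition to computing $S'(\theta)^*:\Uc\to X$.

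For the latter I would fix $v\in\Uc$ and evaluate $(S'(\theta)h,v)_{\Uc}$ for an arbitrary $h\in X$. Using that $D_U:U\to U^*$ is the Riesz isomorphism of the Gelfand triple $U\embed H\embed U^*$, the $\Uc=L^2(0,T;U)$ inner product can be rewritten as a duality pairing, $(S'(\theta)h,v)_{\Uc}=\int_0^T\langle D_Uv(t),w(t)\rangle\,dt$ with $w=S'(\theta)h$. Next I introduce the adjoint state $z$ as the solution of the backward problem \eqref{low-ass-adjointeq}, namely $-\dot z+f'_u(\theta,u)(t)^\star z=D_Uv$ with $z(T)=0$, and substitute this expression for $D_Uv$ into the integral. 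Integrating the term containing $-\dot z$ by parts via the Gelfand-triple formula $\int_0^T\big[\langle\dot z,w\rangle+\langle\dot w,z\rangle\big]\,dt=(z(T),w(T))_H-(z(0),w(0))_H$ (with $z(T)=0$), and then eliminating $f'_u(\theta,u)(t)\,w=-\dot w-f'_\theta(\theta,u)(t)\,h$ by the sensitivity equation, the two $\int_0^T\langle\dot w,z\rangle\,dt$ contributions cancel and one obtains $(S'(\theta)h,v)_{\Uc}=(z(0),w(0))_H-\int_0^T\langle f'_\theta(\theta,u)(t)\,h,z(t)\rangle\,dt$. Finally, inserting $w(0)=Ah$, moving the Banach adjoints $A^\star$ and $f'_\theta(\theta,u)(t)^\star$ across the pairings (using reflexivity of $U$ to identify $U^{**}=U$), and converting the resulting $X^*$-pairings into $X$-inner products via the isomorphism $I_X:X^*\to X$, produces exactly \eqref{adjoint-S'}.

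The step I expect to be the main obstacle is not the algebra but the functional-analytic bookkeeping: one must keep track scrupulously of inner products versus duality pairings in the triples $U\embed H\embed U^*$ and $X^*\leftrightarrow X$ --- which is precisely where $D_U$ and $I_X$ enter --- and, above all, one must justify the integration by parts rigorously. This requires verifying that both the sensitivity solution $w$ and the adjoint state $z$ actually lie in $\Vc=\{u\in\Uc:\dot u\in\Uc^*\}$, so that the Gelfand-triple integration-by-parts identity applies and the traces $w(0),z(0)\in H$ are well defined. In turn this rests on well-posedness of the linear backward equation \eqref{low-ass-adjointeq} with $\dot z\in\Uc^*$; since $f'_u(\theta,u)(t)^\star$ inherits the structural (boundedness/coercivity) properties of $f'_u(\theta,u)(t)$, standard linear parabolic theory (cf. \cite{Roubicek}) supplies existence, uniqueness and the required regularity, after which the identity above is a routine manipulation under the integral sign.
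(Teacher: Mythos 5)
Your proposal is correct and follows essentially the same route as the paper's proof: identify $S'(\theta)\xi$ as the solution of the linearized (sensitivity) equation, introduce the backward adjoint state $z$, substitute $D_Uv=-\dot z+f'_u(\theta,u)^\star z$ into $(S'(\theta)\xi,v)_\Uc$, integrate by parts in the Gelfand triple, eliminate via the sensitivity equation, and translate the resulting dual pairings back to inner products with $D_U$ and $I_X$, finishing with the chain rule for the composition adjoint. Your additional remarks on justifying the regularity of $w$ and $z$ in $\Vc$ are sound and slightly more explicit than the paper, which simply asserts well-definedness from the choice of $\Vc$ and the Nemytskii assumption (and defers continuous dependence of $z$ on the source to a later assumption).
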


\begin{proof}
As the parameter-to-state map $S:X\to\Vc$ in \eqref{original-Smap} is assumed to differentiable, $S'(\theta)\xi=:p\in\Vc$ solves the sensitivity (or linearized) equation
\begin{equation}
\begin{split}\label{low-sensitive-eq}
&\dot{p}(t)+f'_u(\theta,u)(t)p(t)= -f'_\theta(\theta,u)(t)\xi \qquad  t\in(0,T)\\
&p(0)=A\xi,
\end{split}
\end{equation}
where $u=S(\theta)$ is the solution to the original PDE \eqref{original-pde}.
Note that $p\in\Vc$, with $\Vc$ being smooth as defined in Section \ref{dis-setting}, along with the Nemytskii assumption on $f$ ensure $\dot{p}+f'_u(\theta,u)p\in \Ucs$ and $f'_\theta(\theta,u)\xi\in \Ucs$, confirming well-definedness of the sensitivity equation. 

In the following computation, we will respectively use \eqref{low-ass-adjointeq}, integration by parts and \eqref{low-sensitive-eq} with the helps of isomorphisms to translate inner products to dual pairings. For every $\xi\in X, v\in\Uc$, one has
\begin{align*}
&\(S'(\theta)\xi,v\)_\Uc = \int_0^T\( p, v\)_Udt=\int_0^T\langle p, D_Uv\rangle_{U,U^*}dt\\
&= \int_0^T\int_\Omega \langle p,-\dot{z}+f'_u(\theta,u)^\star z \rangle_{U,U^*}\,dt\,dx\\
& = \int_0^T\int_\Omega \(\dot{p}+f'_u(\theta,u)p \)z\,dt\,dx + \int_\Omega A\xi\, z(0)\,dx
=\int_0^T \langle \dot{p}+f'_u(\theta,u)p,z\rangle_{U^*,U} dt + \(A\xi, z(0)\)_H\\
&=\int_0^T \langle-f'_\theta(\theta,u)\xi, z\rangle_{U^*,U}\,dt + \langle \xi, A^\star z(0)\rangle_{X,X^*}\\
&=\int_0^T \langle\xi, -f'_\theta(\theta,u)^\star z\rangle_{X,X^*}\,dt +  \langle \xi, A^\star z(0)\rangle_{X,X^*}\\
&=\(\xi, I_X\int_0^T -f'_\theta(\theta,u)^\star z + I_XA^\star z(0)\)_X
=:\(\xi, S'(\theta)^*v\)_X.
\end{align*}
This proves that the adjoints of $S'(\theta)$ are as shown in \eqref{adjoint-S'}. Applying the chain rule, we obtain the adjoint of the composition map
\[G'(\theta)^*=(L\circ S'(\theta))^*=S'(\theta)^*L^*\]
as claimed in \eqref{adjoint}.
\end{proof}

With the adjoints detailed, we can now state the Landweber iterations for the exact problem (single-level) and the approximated problem (bi-level).\\

\hrule
\begin{algorithm}\,\label{algorithm}
\begin{enumerate}
\item \textbf{\{Single-level\} Iterations with exact $S$.}\\[1ex]
Update parameter:
\begin{equation}
\begin{split}
&\theta^{\delta,0}\in\Bthe, \\
&\thejn = \thej -S'(\thej)^*L^*\(LS(\thej)-\ydel\)\qquad j\leq j^*(\delta),
\end{split}
\end{equation}
 where at each $j$: 
\begin{align}
\text{solve exactly PDE }\eqref{original-pde} \text{ at $\thej$ for } u=:S(\thej)
\end{align}
then compute adjoint
\begin{equation}\label{al:adjoint}
\begin{split}
&v:=L^*\(LS(\thej)-\ydel\)\\
&S'(\thej)^*v = -\int_0^T I_Xf'_\theta(\thej,S(\thej))(t)^\star z(t)\,dt +I_XA^\star z(0) \\[0.5ex]
&\begin{cases}
&-\dot{z}(t)+f'_u(\thej,S(\thej))^\star z(t)= D_Uv(t) \qquad  t\in(0,T)\\
&z(T)=0.
\end{cases}
\end{split}
\end{equation}
\item \textbf{\{Bi-level\} Iterations with approximated $\Stil$.}\\[1ex]
\{Upper-level\} Update parameter:
\begin{equation}
\begin{split}\label{al:approx-theta}
&\widetilde{\theta}^{\delta,0}\in\Bthe\\
& \thekjn = \thekj -S'(\thekj)^*L^*\(L\Stil(\thekj)-\ydel\) \qquad j\leq \jmax(\delta),
\end{split}
\end{equation}
\{Lower-level\} where at each $j$, meaning at each $\thekj$, iterate :
\begin{align}\label{al:approx-S}
& u^j_0\in\But  \nonumber\\
& u^j_{k+1}=u^j_k-F_u'(\thekj,u^j_k)^*\(F(\thekj,u^j_k)-\varphi\) \qquad k\leq K(j)\\
& u^j_{K(j)} = :\Stil(\thekj). \nonumber
\end{align}
\{back to Upper-level\}Then compute the approximate adjoint 
\begin{equation}\label{al:approx-adjoint}
\begin{split}
&v:=L^*\(L\Stil(\thej)-\ydel\)\\
&S'(\thekj)^*v = -\int_0^T I_Xf'_\theta(\thekj,\Stil(\thekj) )(t)^\star\ztil(t)\,dt +I_XA^\star z(0) \\[0.5ex]
&\begin{cases}
&-\dot{\ztil}(t)+f'_u(\thekj,\Stil(\thekj) )^\star \ztil(t)= D_Uv(t) \qquad  t\in(0,T)\\
&\ztil(T)=0.
\end{cases}
\end{split}
\end{equation}
\end{enumerate}
\end{algorithm}
\hrule
\begin{remark}\label{rem:algorithm}
In the bi-level algorithm, no exact $S$ appears; equivalently, no nonlinear PDE needs to be exactly solved. The state approximation $\Stil(\thekj)$ enters the residual $\(L\Stil(\thekj)-\ydel\)$ as well as the adjoint \eqref{al:approx-adjoint}. We emphasize that the term $S'(\thekj)^*$ appearing here is not, in fact, the adjoint of the derivative of the exact $S$ evaluated at $\thekj$, but an approximation thereof.

%We wish to emphasize that while approximating $S$ by $\Stil$ at each $\theta$ via this expression, we keep the true derivative $S'$ as it relates to merely a linear PDE, which is feasible to solve in practice (see \eqref{low-ass-adjointeq}).
\end{remark}

\subsection{Preliminary error analysis}\label{sec:pre-erroranalysis}
At this stage, the Landweber algorithms for single-level and bi-level have been established. The two algorithms differ in the state and the adjoint as mentioned in Remark \ref{rem:algorithm}. We thus examine the \emph{system output error} and the \emph{adjoint error} with respect to a given approximation error $\epk$; this is the focus of this section. 

The state approximation error $\epk$ will be quantified in the succeeding section. The subscript $K(j)$ alludes to the fact that we will eventually derive a stopping rule $K$ of the lower-level with respect to the upper-level iterate $j$. For now, we assume that $K(j)$ is given.

\begin{lemma}[System output error]\label{lem-outputerror}

Denote the state approximation error $\epk$ at the input $\thekj\in B_R(\theta^\dagger)$ by
\begin{align}
\epk:=\|S(\thekj)-\Stil(\thekj)\|_\Uc,
\end{align}
and assume that there exists a constant $M_S>0$ such that
\begin{align}\label{cond-S'}
\|S'(\theta)\|_{X\to\Uc}\leq M_S \qquad \forall \theta\in B_R(\theta^\dagger).
\end{align}
Given any other input $\theta^j\in B_R(\theta^\dagger)$, the system output error is 
\begin{align}\label{error-system}
\|LS(\theta^j)-L\Stil(\thekj)\|\leq M_S\|L\|\|\thej-\thekj\| + \|L\|\epk.
\end{align}
\end{lemma}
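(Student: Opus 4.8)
The plan is to bound the system output error $\|LS(\theta^j)-L\Stil(\thekj)\|$ by inserting the intermediate quantity $LS(\thekj)$ and applying the triangle inequality:
\[
\|LS(\theta^j)-L\Stil(\thekj)\| \leq \|LS(\theta^j)-LS(\thekj)\| + \|LS(\thekj)-L\Stil(\thekj)\|.
\]
For the second term, I factor out $L$ and use its operator norm together with the definition of the state approximation error, giving $\|LS(\thekj)-L\Stil(\thekj)\| \leq \|L\|\,\|S(\thekj)-\Stil(\thekj)\|_\Uc = \|L\|\,\epk$.

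For the first term, I again factor out $L$, so it remains to estimate $\|S(\theta^j)-S(\thekj)\|_\Uc$. Since $S$ is Fr\'echet differentiable on $X$ (stated in the setting) and $B_R(\theta^\dagger)$ is convex, the mean value inequality in integral form applies: writing $\theta_s := \thekj + s(\theta^j - \thekj)$ for $s\in[0,1]$, which lies in $B_R(\theta^\dagger)$ by convexity, we have
\[
S(\theta^j)-S(\thekj) = \int_0^1 S'(\theta_s)(\theta^j-\thekj)\,ds,
\]
so that $\|S(\theta^j)-S(\thekj)\|_\Uc \leq \sup_{s\in[0,1]}\|S'(\theta_s)\|_{X\to\Uc}\,\|\theta^j-\thekj\| \leq M_S\,\|\thej-\thekj\|$ by the uniform bound \eqref{cond-S'}. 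Combining with $\|L\|$ in front yields $\|LS(\theta^j)-LS(\thekj)\| \leq M_S\|L\|\,\|\thej-\thekj\|$.

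Adding the two bounds gives \eqref{error-system}. The argument is essentially a routine application of the triangle inequality and the mean value theorem for Fr\'echet-differentiable maps; the only point requiring a little care is ensuring that the segment between $\theta^j$ and $\thekj$ stays inside $B_R(\theta^\dagger)$ so that the bound \eqref{cond-S'} on $\|S'\|$ is valid there — but this follows immediately from the convexity of the ball and the hypothesis that both endpoints lie in $B_R(\theta^\dagger)$. No genuine obstacle is anticipated.
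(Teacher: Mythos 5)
Your proof is correct and follows essentially the same route as the paper's: insert $LS(\thekj)$, bound the second term by $\|L\|\epk$, and bound the first via the integral mean value theorem together with the uniform bound \eqref{cond-S'}. Your explicit remark that convexity of $B_R(\theta^\dagger)$ keeps the segment inside the ball is a small point the paper leaves implicit, but the argument is otherwise identical.
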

\begin{proof}
Differentiability of $S:X \to \Vc$, thus of $S:X \to \Uc$ due to $\Vc\embed\Uc$, and application of the mean value theorem imply
\begin{align*}
S(y)-S(x)=\int_0^1S'(x+\lambda(y-x))\,d\lambda(y-x)
\end{align*}
meaning
\begin{align*}
\|LS(\theta^j)-L\Stil(\thekj)\|_\Yc&\leq \|LS(\theta^j)-LS(\thekj)\|_\Yc+\|LS(\thekj)-L\Stil(\thekj)\|_\Yc\\
&\leq\|L\int_0^1 S'(\thekj-\lambda(\theta^j-\thekj))\,d\lambda(\theta^j-\thekj) \|_\Yc + \|L\|_{\Uc\to\Yc}\epk\\
&\leq\|L\|_{\Uc\to\Yc}\sup_{\theta\in \Bthe}\|S'(\theta)\|_{X\to\Uc}\|\theta^j-\thekj \|_X + \|L\|_{\Uc\to\Yc}\epk\\
&\leq M_S\|L\|\|\theta^j-\thekj\| + \|L\|\epk.
\end{align*}
\end{proof}

\begin{remark}\label{rem:bounddev}
$M_S$ in \eqref{cond-S'} is a locally uniform bound for solutions of the linearized PDE \eqref{low-sensitive-eq}. % for $S'(\theta)\xi$ uniformly in $\|\xi\|\leq1, \theta\in\Bthe$. 
A sufficient condition for \eqref{cond-S'} to hold is Lipschitz continuity of the parameter-to-state map $S$. When $\theta$ plays the role of a source term or initial data, results regarding Lipschitz continuity of $S$ for general classes of nonlinear PDE model $f$ exist in the literature \cite[Theorem 8.32]{Roubicek}. 

%Essentially, boundedness of solutions to the linearized PDE is decided by the model operator $f'_\theta(\theta,S(\theta))$ and source term $f'_u(\theta,S(\theta))$ in the linearized equation \eqref{low-sensitive-eq}. Therefore, in the next lemma we use the same constant $M_S$ in the constraint for $\|f'_\theta(\theta,u)\|, \|f'_u(\theta,u)\|$ to remind this relationship and also to avoid superfluous notation.
\end{remark}

We now turn our attention to the discrepancy between the exact adjoint and the approximate adjoint with respect to the state approximation error $\epk$.

\begin{lemma}[Adjoint state error]\label{lem-adjointerror}
Let the assumptions in Lemma \ref{lem-outputerror} hold. % with additionally
%\begin{align}
%\|f'_\theta(\theta,u)\|_{X\to\Ucs}\leq M_S, \quad\|f'_u(\theta,u)\|_{\Uc\to\Ucs}\leq M_S \qquad \forall (\theta,u)\in \Bthe\times\Bu
%\end{align}
Suppose that the solution $z$ of the adjoint PDE
\begin{equation*}
\begin{cases}
&-\dot{z}+f'_u(\theta,u)^\star z= h \\
&z(T)=0,
\end{cases}
\end{equation*}
depends continuously on the source term
\begin{align}\label{cond-linearizedPDE}
\|z\|_\Uc+\|z(0)\|_H\leq \Cfu \|h\|_\Ucs \qquad \forall (\theta,u)\in \Bthe\times\Bu.
\end{align}
Furthermore, assume that $f'_\theta,f'_u$ are Lipschitz continuous, with
\begin{equation}
\begin{split}\label{cond-lipf'}
&\|f'_\theta(\theta_1,u_1)-f'_\theta(\theta_2,u_2)\|_{X\to\Ucs}+\|f'_u(\theta_1,u_1)-f'_u(\theta_2,u_2)\|_{\Uc\to\Ucs}\\
&\hspace{7cm}\leq \Ldf \(\|\theta_1-\theta_2\|_X+\|u_1-u_2\|_\Uc\),
\end{split}
\end{equation}
and denote $
L_{R,r}:=\Ldf\(R+rC_{\Vc\to\Uc}\)+\|f'_\theta(\theta^\dagger,u^\dagger)\|_{X\to\Ucs}+\|f'_u(\theta^\dagger,u^\dagger)\|_{\Uc\to\Ucs}$.\\[1ex]
Then, given any other input $\theta^j\in B_R(\theta^\dagger)$, we obtain the adjoint error
\begin{equation}
\begin{split}\label{error-adjoint}
\|S'&(\theta^j)^*-S'(\thekj)^*\|_{\Uc\to X}\\
&\leq  \Ldf\Cfu\|I_X\|\|D_U\|\Big[L_{R,r} \Cfu+1+\|A\|\Cfu\Big]\((1+M_S)\|\theta^j-\thekj\|+ \epk\)\\
&=:C_{\nabla f,A}\((1+M_S)\|\theta^j-\thekj\|+ \epk\).
\end{split}
\end{equation}
\end{lemma}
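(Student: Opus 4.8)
The plan is to bound the difference of the two adjoint operators by tracking, term by term, the formula in \eqref{adjoint-S'}: the adjoint $S'(\theta)^*v$ is an integral term $-\int_0^T I_Xf'_\theta(\theta,u)^\star z\,dt$ plus a boundary term $I_XA^\star z(0)$, where $z$ solves the final value problem \eqref{low-ass-adjointeq} with source $D_Uv$. Evaluating at the two inputs, we have two triples $(\theta^j,S(\theta^j),z^j)$ and $(\thekj,\Stil(\thekj),\ztil)$, where $z^j$ and $\ztil$ solve the adjoint PDE with the respective coefficients $f'_u$ but the same source $D_Uv$. So the first step is the decomposition
\begin{align*}
S'(\theta^j)^*v-S'(\thekj)^*v = -\int_0^T I_X\big(f'_\theta(\theta^j,S(\theta^j))^\star z^j - f'_\theta(\thekj,\Stil(\thekj))^\star\ztil\big)\,dt + I_XA^\star(z^j(0)-\ztil(0)),
\end{align*}
and then the inner difference is split as $f'_\theta(\theta^j,S(\theta^j))^\star(z^j-\ztil) + \big(f'_\theta(\theta^j,S(\theta^j))^\star-f'_\theta(\thekj,\Stil(\thekj))^\star\big)\ztil$ — a "stability in the coefficient" part and a "stability in the source/state" part, the standard trick for these estimates.

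The second step is to estimate $\|z^j-\ztil\|_\Uc+\|z^j(0)-\ztil(0)\|_H$. Subtracting the two adjoint PDEs, the difference $w:=z^j-\ztil$ solves $-\dot w + f'_u(\theta^j,S(\theta^j))^\star w = \big(f'_u(\thekj,\Stil(\thekj))^\star - f'_u(\theta^j,S(\theta^j))^\star\big)\ztil$ with $w(T)=0$; the source here is genuinely nonzero because the coefficients differ. Applying the continuous-dependence assumption \eqref{cond-linearizedPDE} gives $\|w\|_\Uc+\|w(0)\|_H \leq \Cfu\,\|f'_u(\thekj,\Stil(\thekj)) - f'_u(\theta^j,S(\theta^j))\|_{\Uc\to\Ucs}\,\|\ztil\|_\Uc$, and the Lipschitz bound \eqref{cond-lipf'} turns the operator-norm factor into $\Ldf(\|\theta^j-\thekj\| + \|S(\theta^j)-\Stil(\thekj)\|_\Uc)$. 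The state difference is handled exactly as in Lemma \ref{lem-outputerror} (mean value theorem plus \eqref{cond-S'}), yielding $\|S(\theta^j)-\Stil(\thekj)\|_\Uc\leq M_S\|\theta^j-\thekj\| + \epk$, so altogether $\|w\|_\Uc+\|w(0)\|_H$ is bounded by $\Ldf\Cfu\|\ztil\|_\Uc\big((1+M_S)\|\theta^j-\thekj\|+\epk\big)$. One also needs $\|\ztil\|_\Uc\leq\Cfu\|D_Uv\|_\Ucs\leq\Cfu\|D_U\|\|v\|_\Uc$ from \eqref{cond-linearizedPDE} applied to $\ztil$ itself, which is where the factor $\Cfu\|D_U\|$ and one power of $\Cfu$ in the final constant originate; the constant $L_{R,r}$ will appear as the bound $\|f'_\theta(\theta^j,S(\theta^j))\|_{X\to\Ucs}\leq L_{R,r}$ obtained from \eqref{cond-lipf'} together with the triangle inequality around the ground truth $(\theta^\dagger,u^\dagger)$ and the radius bounds $\|\theta^j-\theta^\dagger\|\leq R$, $\|S(\theta^j)-u^\dagger\|_\Uc\leq rC_{\Vc\to\Uc}$.

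The third step is to assemble: the "coefficient" part of the integral contributes $\|I_X\|\sup_t\|f'_\theta(\theta^j,S(\theta^j))^\star\|\cdot(\|w\|_\Uc)\leq \|I_X\|L_{R,r}\cdot\Ldf\Cfu\|\ztil\|_\Uc(\cdots)$; the "state" part of the integral contributes $\|I_X\|\Ldf(\|\theta^j-\thekj\|+\|S(\theta^j)-\Stil(\thekj)\|_\Uc)\|\ztil\|_\Uc\leq\|I_X\|\Ldf(1+M_S)\|\ztil\|_\Uc(\cdots)$ — note this is where the bare "$1$" in the bracket $[L_{R,r}\Cfu+1+\|A\|\Cfu]$ comes from, after dividing by $\Cfu$; and the boundary term contributes $\|I_X\|\|A\|\,\|w(0)\|_H\leq\|I_X\|\|A\|\Cfu\Ldf\Cfu\|\ztil\|_\Uc(\cdots)$, giving the $\|A\|\Cfu$ summand. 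Collecting the common factor $\Ldf\Cfu\|I_X\|\|D_U\|$ (the $\|D_U\|$ from converting $\|\ztil\|_\Uc$ to $\|v\|_\Uc$, absorbing one $\Cfu$), dividing out $\|v\|_\Uc$ to pass to the operator norm, and bounding $(1+M_S)\|\theta^j-\thekj\|+\epk$ exactly as written gives \eqref{error-adjoint}. The main obstacle is bookkeeping: keeping the three sources of the factor $\Cfu$ (once from bounding $\ztil$, once from the stability of $w$, once from the boundary term of $w$) straight so that the final bracket reads precisely $[L_{R,r}\Cfu+1+\|A\|\Cfu]$ rather than, say, an extra power of $\Cfu$ on the middle term; a careful but routine matter once the decomposition above is in place. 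A minor subtlety worth a sentence is the identification of Banach versus Hilbert adjoints (the $\star$ versus $*$ distinction) and that $\|B^\star\|=\|B\|$, so the operator-norm bounds on $f'_\theta$, $f'_u$, $A$ transfer directly to their Banach adjoints appearing in the formulas.
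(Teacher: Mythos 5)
Your proposal is correct and follows essentially the same route as the paper's proof: the same decomposition into a coefficient-difference term, a term where the adjoint-state difference $z-\ztil$ is propagated through its own final-value problem and bounded via \eqref{cond-linearizedPDE}, and the boundary term, followed by the application of Lemma \ref{lem-outputerror} with $L=\mathrm{Id}$ and the same bookkeeping of the $\Cfu$ factors. The only (immaterial) difference is the symmetric choice in the mixed-term insertion — you let the coefficient difference act on $\ztil$ and drive the equation for $z^j-\ztil$ by $\ztil$ under the exact coefficient, whereas the paper does the mirror-image split with $z$ — and both yield the identical constant $C_{\nabla f,A}$.
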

\begin{proof}
The Lipschitz condition \eqref{cond-lipf'} implies boundedness of the derivatives, due to
\begin{align}\label{bound-f'}
&\|f'_\theta(\theta,u)\|_{X\to\Ucs}+\|f'_u(\theta,u)\|_{\Uc\to\Ucs}\nonumber\\&\leq\|f'_\theta(\theta,u)-f'_\theta(\theta^\dagger,u^\dagger)\|+\|f'_u(\theta,u)-f'_u(\theta^\dagger,u^\dagger)\| +\|f'_\theta(\theta^\dagger,u^\dagger)\|+\|f'_u(\theta^\dagger,u^\dagger)\| \nonumber\\
&\leq \Ldf\big(\|\theta-\theta^\dagger\|_X+C_{\Vc\to\Uc}\|u-u^\dagger\|_\Vc\big)+\|f'_\theta(\theta^\dagger,u^\dagger)\|+\|f'_u(\theta^\dagger,u^\dagger)\| \nonumber\\
&\leq \Ldf\big(R+rC_{\Vc\to\Uc}\big)+\|f'_\theta(\theta^\dagger,u^\dagger)\|+\|f'_u(\theta^\dagger,u^\dagger)\| \nonumber\\
&=L_{R,r},
\end{align}
recalling that $R$ and $r$ are the radii of the balls $\Bthe\subset X$ and $\Bu\subset \Vc$. We will use this bound in evaluating the term $Q_2$ later in the proof.

Consulting Algorithm \ref{algorithm}, subtracting the approximate adjoint state \eqref{al:approx-adjoint} from the exact adjoint state \eqref{al:adjoint}, then inserting the mixed term $f'_\theta(\thekj,\Stil(\thekj))^*z$, we see that for any $v\in \Uc$,
\begin{align*}
&\(S'(\theta^j)-S'(\thekj)\)^*v\\&=-I_X\int_0^T \(f'_\theta(\theta^j,S(\theta^j))(t)^\star z(t)- f'_\theta(\thekj,\Stil(\thekj) )(t)^\star \ztil(t)\)dt+I_XA^*(z(0)-\ztil(0))\\
&=-I_X\int_0^T \(f'_\theta(\theta^j,S(\theta^j))(t)^\star -f'_\theta(\thekj,\Stil(\thekj) )(t)^\star\)z(t)\,dt\\&\qquad\quad- I_X\int_0^T f'_\theta(\thekj,\Stil(\thekj))(t)^\star(z(t)-\ztil(t))\,dt\\&\quad\qquad+I_XA^\star(z(0)-\ztil(0))\\[1ex]
&=:Q_1+Q_2+Q_3.
\end{align*}
We recall that $z$ and $\ztil$ are, respectively, the solutions of
\begin{equation*}
\begin{cases}
&-\dot{z}+f'_u(\theta^j,S(\theta^j))^\star z= D_Uv \\
&z(T)=0,
\end{cases}
\text{ and }
\begin{cases}
&-\dot{\ztil}+f'_u(\thekj,\Stil(\thekj) )^\star \ztil= D_Uv\\
&\ztil(T)=0.
\end{cases}
\end{equation*}
Inspecting $Q_2$, we see that the difference $z-\ztil$ solves
\begin{equation}
\begin{cases}
&-(\dot{z}-\dot{\ztil})+f'_u(\thekj,\Stil(\thekj) )^\star (z-\ztil)= \(-f'_u(\theta^j,S(\theta^j)+f'_u(\thekj,\Stil(\thekj)\)^\star z\\
&\ztil(T)=0.
\end{cases}
\end{equation}
Now, using \eqref{cond-linearizedPDE}, we can bound $z$ by the source term $D_Uv$. In a similar fashion, we bound $z-\ztil$ by the source term $(-f'_u(\theta^j,S(\theta^j)+f'_u(\thekj,\Stil(\thekj))^\star z$, then estimate this source further via the Lipschitz condition \eqref{cond-lipf'}. More precisely, we have
\begin{align*}
&\|z\|_\Uc\leq \Cfu\|D_Uv\|_\Ucs\leq \Cfu\|D_U\|_{U\to U^*}\|v\|_\Uc,\\[2ex]
&\|z-\ztil\|_\Uc+\|z(0)-\ztil(0)\|_H\\
&\qquad\leq \Cfu \|f'_u(\theta^j,S(\theta^j)-f'_u(\thekj,\Stil(\thekj)\|_{\Uc\to\Ucs}\|z\|_{\Uc}\\
&\qquad\leq \Ldf (\Cfu)^2\|D_U\|_{U\to U^*}\(\|\theta^j-\thekj\|_X+\|S(\theta^j)-\Stil(\thekj)\|_\Uc\)\|v\|_\Uc.
\end{align*}
Using these and the boundedness of the derivative from \eqref{bound-f'}, we can estimate $Q_1$, $Q_2$ and $Q_3$ by the parameter difference $\|\theta^j-\thekj\|$ and their difference propagating through the state approximation $\|S(\theta^j)-\Stil(\thekj)\|$, i.e.
\begin{align*}
\|Q_2\|_X&\leq \|I_X\|_{X^*\to X}\|f'_\theta(\thekj,\Stil(\thekj))^\star\|_{\Uc\to X^*}\|z-\ztil\|_\Uc\\
&\leq \|I_X\|L_{R,r}\Ldf (\Cfu)^2\|D_U\| \(\|\theta^j-\thekj\|_X+\|S(\theta^j)-\Stil(\thekj)\|_\Uc\) \|v\|_\Uc,\\[1ex]
\|Q_1\|_X&\leq \|I_X\|\Ldf \(\|\theta^j-\thekj\|_X+\|S(\theta^j)-\Stil(\thekj)\|_\Uc\) \Cfu\|D_U\|_{U\to U^*}\|v\|_\Uc\\
&=\|I_X\|\Ldf\Cfu\|D_U\| \(\|\theta^j-\thekj\|_X+\|S(\theta^j)-\Stil(\thekj)\|_\Uc\)\|v\|_\Uc,\\[1ex]
\|Q_3\|_X&\leq \|I_X\|_{X^*\to X}\|A^\star\|_{H\to X^*}\|z(0)-\ztil(0)\|_H\\
&\leq \|I_X\|\|A\| \Ldf (\Cfu)^2\|D_U\|\(\|\theta^j-\thekj\|_X+\|S(\theta^j)-\Stil(\thekj)\|_\Uc\)\|v\|_\Uc.
\end{align*}
In $Q_2$, we have applied the estimate $\|f'_\theta(\thekj,\Stil(\thekj))^\star\|_{\Uc\to X^*}\leq L_{R,r}$ derived in \eqref{bound-f'}.

In the last step, combining $Q_1$, $Q_2$ and $Q_3$, then applying Lemma \ref{lem-outputerror} for $L=\text{Id}$ to further estimate $\|S(\theta^j)-\Stil(\thekj)\|_\Uc$, we arrive at
\begin{align*}
&\|(S'(\theta^j)-S'(\thekj))^*\|_{\Uc\to X}\\
&\leq\|I_X\| \Ldf\Cfu\|D_U\|\Big[L_{R,r} \Cfu+1+\|A\|\Cfu\Big]\(\|\theta^j-\thekj\|_X+\|S(\theta^j)-\Stil(\thekj)\|_\Uc\)\\
&\leq \|I_X\| \Ldf\Cfu\|D_U\|\Big[L_{R,r} \Cfu+1+\|A\|\Cfu\Big]\((1+M_S)\|\theta^j-\thekj\|+ \epk\),
\end{align*}
which is \eqref{error-adjoint}; this completes the proof.
\end{proof}

At this point, we have analyzed the system output error and adjoint error in the bi-level Algorithm \ref{algorithm} via the state approximation error $\epk$. We now study $\epk$ itself.

\section{The lower-level iteration}\label{sec:lowiter}
In this section, we approximate $S$ by $\widetilde{S}$ at given $\theta\in \Bthe$ via the procedure \eqref{al:approx-S} in Algorithm \ref{algorithm}. In particular, we examine convergence and convergence rate of \eqref{al:approx-S} in order to derive the state approximation error $\epk:=\|S(\thekj)-\Stil(\thekj)\|_\Uc$ in an explicit form. Optimally, $\epk$ should be controllably small by letting the lower-level scheme iterate until a sufficiently large stopping index $K(j)$ is reached. The lower-level stopping index $K(j)$ will be derived in the next section, when its role in the convergence of the upper-level iteration becomes relevant.

For convenience, we shorten the notation in the lower-level problem \eqref{original-pde-reform}. Explicitly, for a fixed $\theta$, we introduce the model operator
\begin{align}\label{low-IP}
\Ft:\Vc\to\Uc^*\times H\qquad \Ft(u):=F(\theta)(u)=(\dot{u}+f(\theta,u);A\theta-u|_{t=0})=\varphi;
\end{align}
thus, $\Ft'$ will indicate the derivative with respect to the state $u$, and an exact solution $u^*$ to \eqref{low-IP} for a given $\theta$ satisfies \[\Ft(u^*)=\varphi.\] 
The procedure \eqref{al:approx-S} in Algorithm \ref{algorithm}, dropping the superscript $j$, now takes the form
\begin{equation}\label{low-iter}
\begin{split}
& u_0\in\But \\
& u_{k+1}=u_k-\Ft'(u_k)^*\(\Ft(u_k)-\varphi\) \qquad k\leq K(j)
\end{split}
\end{equation}
Finally, we recall the use of $(\theta^\dagger,u^\dagger)$ to denote the exact parameter and exact corresponding state of the original inverse problem \eqref{original-pde}-\eqref{original-measure}. 

The following result presents the convergence analysis for the lower-level Landweber iterates. % We follow the ideas in \cite{KalNeuSch08,scherzer95,Kindermann17} with some slight modification. In particularly, comparison to \cite[Section 2]{KalNeuSch08}, 
We derive the convergence rate in the noise-free case not via the well-known source condition, but rather through a coercivity assumption. %, as inspired by PDEs theory \cite{Roubicek}. 
The weak convergence analysis is carried out %under the weak tangential cone condition, 
without assuming weak closedness of $\Ft$. %together with closeness and convexity of the fixed point set \cite{scherzer95}, or weak closedness of $u\mapsto \Ft'(u)(\Ft(u)-\varphi)$ \cite{Kindermann17}. \blue{mention this in Contribution}

\begin{theorem}[Convergence and rate] \label{low-conv-rate}
Suppose that at each $\theta\in\Bthe$, the equation $\Ft(u)=\varphi$ is solvable at some $u^*\in\Buh$.

i) Assume that there exists a constant $M_r>0$ such that 
\begin{align}\label{low-ass-derivative}
&\|\Ft'(u)\|_{\Vc\to\Ucs\times H}\leq M_r<\sqrt{2} \qquad \forall u\in\Bu
\end{align}
and a constant $\mu_r>0 $ such that the weak tangential cone condition (wTC)
\begin{equation}\label{low-ass-tcc}
\begin{split}
2\(\Ft(u)-\Ft(v),\Ft'(u)(u-v)\)_{\Ucs\times H}\geq (M_r^2+\mu_r)\|\Ft(u)-\Ft(v)\|_{\Ucs\times H}^2\\  \forall u,v\in\Bu%\quad\text{for some } M_r^2+\mu_r\leq2
\end{split}
\end{equation}
holds uniformly for all $\theta\in\Bthe$.\\
Then, the whole sequence $\{u_k\}$ as given in \eqref{al:approx-S} and \eqref{low-iter} remains in the ball $\Bu$ for all $k\in\N$. Furthermore, the sequence converges weakly to a solution of $\Ft(u)=\varphi$.\\

ii) If additionally $\Ft$ is coercive, in the sense that there exists some $C_{coe}>0$ such that
\begin{align}\label{low-ass-coercive}
C_{coe}\|\Ft(u)-\Ft(u^*)\|_{\Ucs\times H}\geq \|u-u^*\|_\Vc^\alpha\qquad \text{for some }\alpha\geq1
\end{align}
for all $u\in\Bu$ and all $\theta\in\Bthe$, then we obtain strong convergence of $\{u_k\}$, with rate
\begin{align}\label{low-rate}
\|u_k-u^*\|_\Vc=\mathcal{O}\(\frac{1}{k^{1/(2\alpha)}}\), \quad \|\Ft(u_k)-\varphi\|_{\Ucs\times H}=\mathcal{O}\(\frac{1}{k^{1/(2\alpha)}}\).
\end{align}
\end{theorem}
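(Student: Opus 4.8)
The plan is to run the classical Landweber monotonicity argument, organised so that part~i) needs only weak sequential compactness of bounded sets in the Hilbert space $\Vc$ together with an Opial-type uniqueness step for the weak limit, while the rate in part~ii) falls out of the telescoped energy inequality combined with coercivity.

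First I would fix the solution $u^*\in\Buh$ and show, by induction on $k$, that $u_k\in\Bu$ and that $\|u_k-u^*\|_\Vc$ is non-increasing. The base case uses $u_0\in\But$ and $u^*\in\Buh$, so that $\|u_0-u^\dagger\|\le\|u_0-u^*\|+\|u^*-u^\dagger\|\le r$. For the inductive step, with $\Ft(u^*)=\varphi$ and expanding the square,
\[
\|u_{k+1}-u^*\|_\Vc^2=\|u_k-u^*\|_\Vc^2-2\big(\Ft(u_k)-\Ft(u^*),\,\Ft'(u_k)(u_k-u^*)\big)_{\Ucs\times H}+\|\Ft'(u_k)^*(\Ft(u_k)-\varphi)\|_\Vc^2,
\]
I bound the last term by $M_r^2\|\Ft(u_k)-\varphi\|_{\Ucs\times H}^2$ using \eqref{low-ass-derivative} and the cross term from below using the weak tangential cone condition \eqref{low-ass-tcc}; the constant $M_r^2+\mu_r$ there is calibrated precisely so that the net residual contribution equals $-\mu_r\|\Ft(u_k)-\varphi\|_{\Ucs\times H}^2\le 0$. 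Hence $\|u_{k+1}-u^*\|_\Vc\le\|u_k-u^*\|_\Vc\le\|u_0-u^*\|_\Vc\le r/2$, so $u_{k+1}\in\Bu$, closing the induction. Telescoping gives $\mu_r\sum_{k\ge0}\|\Ft(u_k)-\varphi\|_{\Ucs\times H}^2\le\|u_0-u^*\|_\Vc^2<\infty$, in particular $\|\Ft(u_k)-\varphi\|_{\Ucs\times H}\to0$; and the same inequality holds verbatim with any solution $w\in\Bu$ of $\Ft(w)=\varphi$ in place of $u^*$, so $\|u_k-w\|_\Vc$ is non-increasing for every such $w$.

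For part~i), since $\{u_k\}\subset\Bu$ is bounded and $\Vc$ is Hilbert, every subsequence has a weakly convergent subsubsequence, with limit in the weakly closed ball $\Bu$. I would then identify every weak accumulation point $\bar u$ as a solution \emph{without assuming weak closedness of $\Ft$}: split $\Ft=\Lambda+\mathcal N$, where $\Lambda(u)=(\dot u;\,A\theta-u|_{t=0})$ is affine and bounded linear (because $\frac{d}{dt}\colon\Vc\to\Ucs$ is bounded and $\Vc\embed C(0,T;H)$), hence weakly sequentially continuous, and $\mathcal N(u)=(f(\theta,u);0)$ is the Nemytskii part. Along $u_{k_n}\wto\bar u$ one has $\Lambda(u_{k_n})\wto\Lambda(\bar u)$, while $\Ft(u_{k_n})\to\varphi$ strongly, so $\mathcal N(u_{k_n})\wto\varphi-\Lambda(\bar u)$; the Carath\'eodory/Nemytskii structure of $f$ together with the compactness inherent in the Gelfand-triple setting (by an Aubin--Lions argument, $\Vc$ embeds compactly into the space on which $f(\theta,\cdot)$ acts continuously, so $u_{k_n}$ converges strongly there) then forces $\mathcal N(u_{k_n})\to\mathcal N(\bar u)$, whence $\Ft(\bar u)=\varphi$. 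Finally, since $\|u_k-w\|_\Vc$ converges for every solution $w\in\Bu$ and all accumulation points are such solutions, expanding $\|u_k-\bar u_1\|_\Vc^2-\|u_k-\bar u_2\|_\Vc^2$ for two accumulation points $\bar u_1,\bar u_2$ and passing to the corresponding subsequences shows $\bar u_1=\bar u_2$; boundedness then upgrades this to weak convergence of the whole sequence to a solution of $\Ft(u)=\varphi$. This identification of weak limit points without a soft closedness hypothesis is the step I expect to require the most care: one really has to use the explicit form of $\Ft$ and the compact embedding in the function-space setting, and also keep the $r/2$-versus-$r$ slack in the radii so the induction never leaves $\Bu$.

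For part~ii), coercivity \eqref{low-ass-coercive} with $u^*$ turns $\|\Ft(u_k)-\varphi\|_{\Ucs\times H}\to0$ into $\|u_k-u^*\|_\Vc^\alpha\le C_{coe}\|\Ft(u_k)-\varphi\|_{\Ucs\times H}\to0$, i.e.\ strong convergence $u_k\to u^*$ in $\Vc$. For the rate, write $a_k:=\|u_k-u^*\|_\Vc^2$ and $r_k:=\|\Ft(u_k)-\varphi\|_{\Ucs\times H}$; the energy inequality gives $a_{k+1}\le a_k$ and $\mu_r\sum_{j=0}^{k}r_j^2\le a_0$, while coercivity and monotonicity of $(a_j)$ give $r_j^2\ge C_{coe}^{-2}a_j^\alpha\ge C_{coe}^{-2}a_k^\alpha$ for all $j\le k$ (here $\alpha\ge1$ enters). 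Combining, $(k+1)\,C_{coe}^{-2}a_k^\alpha\le\mu_r^{-1}a_0$, hence $a_k\le\big(C_{coe}^2a_0/\mu_r\big)^{1/\alpha}(k+1)^{-1/\alpha}$, which is exactly $\|u_k-u^*\|_\Vc=\mathcal{O}(k^{-1/(2\alpha)})$; and $\mu_r r_k^2\le a_k-a_{k+1}\le a_k=\mathcal{O}(k^{-1/\alpha})$ delivers $\|\Ft(u_k)-\varphi\|_{\Ucs\times H}=\mathcal{O}(k^{-1/(2\alpha)})$.
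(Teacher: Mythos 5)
Most of your argument is correct and coincides with the paper's: the Fej\'er monotonicity computation, the radius bookkeeping ($u_0\in\But$, $u^*\in\Buh$, hence $u_k\in B_{r/2}(u^*)\subseteq\Bu$), the telescoping bound $\mu_r\sum_k\|\Ft(u_k)-\varphi\|^2\leq\|u_0-u^*\|^2$, the Opial-type uniqueness of weak accumulation points, and both rate derivations in part~ii) are all sound. (Your derivation of the residual rate from $\mu_r r_k^2\leq a_k-a_{k+1}\leq a_k$ is a slightly different but equally valid route to the one in the paper, which instead uses the mean value theorem and the bound $\|\Ft'\|\leq M_r$.)

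The genuine gap is in your identification of weak accumulation points as solutions. You propose to split $\Ft=\Lambda+\mathcal{N}$ and pass to the limit in the Nemytskii part via an Aubin--Lions compactness argument. But the theorem's hypotheses do not supply what that argument needs: the Gelfand triple $U\embed H\embed U^*$ is not assumed to have a \emph{compact} embedding, and $f$ is only assumed Carath\'eodory, which without growth conditions does not give sequential continuity of $u\mapsto f(\theta,u)$ from strong convergence in an intermediate space into $\Ucs$. So your step ``$\mathcal{N}(u_{k_n})\to\mathcal{N}(\bar u)$'' does not follow from the stated assumptions. The paper closes this step using only hypotheses already in place: apply the weak tangential cone condition \eqref{low-ass-tcc} at the pair $(\bar u, u_{k_n})$ to get
\begin{align*}
\frac{M_r^2+\mu_r}{2}\|\Ft(\bar{u})-\Ft(u_{k_n})\|^2\leq \(\Ft'(\bar{u})^*(\Ft(\bar{u})-\varphi),\bar{u}-u_{k_n}\)+\(\Ft'(\bar{u})^*(\varphi-\Ft(u_{k_n})),\bar{u}-u_{k_n}\),
\end{align*}
where the first term vanishes as $n\to\infty$ because $u_{k_n}\wto\bar u$ against the \emph{fixed} vector $\Ft'(\bar{u})^*(\Ft(\bar{u})-\varphi)$, and the second is bounded by $rM_r\|\Ft(u_{k_n})-\varphi\|\to0$ by the strong convergence of the residuals. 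Hence $\Ft(\bar u)=\varphi$ with no closedness, compactness, or continuity assumptions beyond \eqref{low-ass-derivative} and \eqref{low-ass-tcc}. Replacing your compactness argument by this one repairs the proof; everything else you wrote then goes through.
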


Before proceeding with the proof, we make some observation on these assumptions.
\begin{remark}[On assumptions of Theorem \ref{low-conv-rate}]\,\label{rem-derivative}
\begin{enumerate}[label=(\roman*)]
\item 
A sufficient condition for \eqref{low-ass-derivative} is that $\|f'_u(\theta,u)\|_{X\times\Vc\to\Uc^*}$ is uniformly bounded in $\Bthe\times\Bu$. This is due to the fact that we already have boundedness of the linear operators $\|\frac{d}{dt}\|_{\Vc\to\Uc^*}, \|A\|_{X\to\Uc^*}$ and $\|(\cdot)_{t=0}\|_{\Vc\to\Uc^*}$ guaranteed from the choice of function spaces suggested in Section \ref{dis-setting}. The magnitude condition $M_r<\sqrt{2}$ is easily attained by scaling.

\item
Assumption \eqref{low-ass-tcc} is equivalent to
\[\(\Ft(u)-\Ft(v)-\Ft'(u)(u-v),\Ft(u)-\Ft(v)\)\leq \(1-\frac{M_r^2+\mu_r}{2}\)\|\Ft(u)-\Ft(v)\|^2.\]
This is the form of the weak tangential cone condition introduced in \cite{scherzer95,Kindermann21} with the tangential cone constant $c_{tc}:=\(1-\frac{M_r^2+\mu_r}{2}\)\in(0,1)$. Its strong version (sTC) takes the form
\[\|\Ft(u)-\Ft(v)-\Ft'(u)(u-v)\|\leq \tilde{c}_{tc}\|\Ft(u)-\Ft(v)\| \qquad \tilde{c}_{tc}\in(0,1).\]
Moreover, as the initial condition operator $A$ is linear, (wTC) and (sTC) in fact constrain only the nonlinear model $f$ here; moreover, this constraint is with respect to the state $u$ only, and not to the parameter $\theta$ (see also Remark \ref{dis:TCC}). 

\item
Assumption \eqref{low-ass-coercive} can be relaxed to the weaker norm $\|u-u^*\|^\alpha_\Uc$. In this case, \eqref{low-rate} yields the rate $\|u_k-u^*\|_\Uc=\mathcal{O}\(\frac{1}{k^{1/(2\alpha)}}\)$; this rate is sufficient for the treatment of the upper-level problem in the later sections, as we need only quantify $\epk:=\|S(\thekj)-\Stil(\thekj)\|_\Uc$, the state approxmation in $\Uc$-norm.
\end{enumerate}
\end{remark}

\begin{proof}[Proof of Theorem \ref{low-conv-rate}]
i) Fix $\theta\in\Bthe$ and $\Ft(u^*)=\varphi$. Following the classical approach \cite{HNS95,KalNeuSch08}, we show Fej\'er monotonicity of the error sequence via the boundedness assumption \eqref{low-ass-derivative} on the derivative and the weak tangential cone condition \eqref{low-ass-tcc}:
\begin{equation}\label{low-Fejer}
\begin{split}
\|\ukn-u^*\|^2&-\|\uk-u^*\|^2= 2\(\ukn-\uk,\uk-u^*\)+\|\ukn-\uk\|^2 \\
&=2\(\varphi-\Ft(\uk),\Ft'(\uk)(\uk-u^*)\)+\|\Ft'(\uk)^*(\Ft(\uk)-\varphi)\|^2 \\
&\leq -(M_r^2+\mu_r)\|\Ft(\uk)-\varphi\|^2+M_r^2\|\Ft(\uk)-\varphi\|^2 \\
&= -\mu_r \|\Ft(\uk)-\varphi\|^2. 
\end{split}
\end{equation} 
This estimate shows that for each $\theta$, as long as $u_0$ is inside the ball $\But$, the whole sequence $u_k$ remains in $\But$. Together with the fact $u^*\in\Buh$, it follows that $u_k\in\Bu$ for all $\theta\in\Bthe$. As a consequence, there exists subsequence $\{u_{k_n}\}$ weakly convergent to some $\bar{u}\in\Vc$, and \[u_{k_n}\wto\bar{u}\in \Bu\] as $\Bu$ is closed and convex.

Next, summing the inequality \eqref{low-Fejer} from 0 up to any arbitrary index $K\in\N$  results in
\begin{align}\label{low-sum-F}
\mu_r\sum_{k=0}^K \|\Ft(\uk)-\varphi\|^2&\leq \|u_0-u^*\|^2-\|u_{K+1}-u^*\|^2\leq \|u_0-u^*\|^2 \qquad \forall K\in\N \nonumber\\
\Rightarrow\sum_{k=0}^\infty \|\Ft(\uk)-\varphi\|^2& \leq \frac{1}{\mu_r}\|u_0-u^*\|^2.
\end{align}
Thus, together with the uniform bound \eqref{low-ass-derivative} on the derivative, we have convergence of the image and the gradient step series;
\begin{align*} 
&\Ft(\uk)\to\varphi\qquad\text{as }k\to\infty\text{ and}\\
&\sum_{k=0}^\infty\|u_{k+1}-u_k\|^2=\sum_{k=0}^\infty\|\Ft'(\uk)^*(\Ft(\uk)-\varphi)\|^2\leq \frac{M_R^2}{\mu_R} \|u_0-u^*\|^2.
\end{align*}

Now as $\bar{u}\in\Bu$, applying again the weak tangential cone condition \eqref{low-ass-tcc} then inserting $\varphi$ and using the weak convergence result $u_{k_n}\wto\bar{u}$ and the strong convergence $\Ft(\uk)\to\varphi$, we have
\begin{align*}
&\lim_{n\to\infty}\frac{M_r^2+\mu_r}{2}\|\Ft(\bar{u})-\Ft(u_{k_n})\|^2\leq \lim_{n\to\infty}\(\Ft(\bar{u})-\Ft(u_{k_n}),\Ft'(\bar{u})(\bar{u}-u_{k_n})\)\\
&=\lim_{n\to\infty}\(\Ft'(\bar{u})^*(\Ft(\bar{u})-\varphi-\Ft(u_{k_n})+\varphi),\bar{u}-u_{k_n}\)\\
&=\lim_{n\to\infty}\(\Ft'(\bar{u})^*(\Ft(\bar{u})-\varphi),\bar{u}-u_{k_n}\)+\lim_{n\to\infty}\(\Ft'(\bar{u})^*(-\Ft(u_{k_n})+\varphi),\bar{u}-u_{k_n}\)\\
&=\lim_{n\to\infty} \(\Ft'(\bar{u})^*(-\Ft(u_{k_n})+\varphi),\bar{u}-u_{k_n}\)\leq rM_r\lim_{n\to\infty}\|\Ft(u_{k_n})-\varphi\|\\&=0.
\end{align*}
This implies \[\Ft(\bar{u})=\varphi,\] proving that the weak limit $\bar{u}$ of any weakly convergent subsequence is in fact an exact solution. %\red{(any thing wrong??? Scherzer and Kindermann need week closedness of $F$ or $J'$ here.)}

Next, we deduce weak convergence of the whole sequence $\{u_k\}$ from Fej\'er monotonicity. Similar to \cite[Lemma 9.1]{ClasonValkonen}, let $u^*$, $\bar{u}$ be two weak accumulation points, thus exact solutions, as argued above. Then there exist subsequences $\{u_{k_n}\}$, $\{u_{k_m}\}$ with $u_{k_n}\wto u^*$ and $u_{k_m}\wto\bar{u}$ when $n$, $m\to\infty$. Accordingly,
\begin{align*}
\|u^*-\bar{u}\|^2&=\lim_{n,m\to\infty}(u_{k_n},u^*-\bar{u})-(u_{k_m},u^*-\bar{u})\\
&=\lim_{n,m\to\infty}\frac{1}{2}\(\|u_{k_n}-u^*\|^2-\|u^*\|^2-\|u_{k_n}-\bar{u}\|^2+\|\bar{u}\|^2\)\\&\hspace{1.5cm} -\frac{1}{2}\(\|u_{k_m}-u^*\|^2-\|u^*\|^2-\|u_{k_m}-\bar{u}\|^2+\|\bar{u}\|^2\)\\
&=\lim_{n,m\to\infty} \frac{1}{2}\(\|u_{k_n}-u^*\|^2-\|u_{k_n}-\bar{u}\|^2\)-\frac{1}{2}\(\|u_{k_m}-u^*\|^2-\|u_{k_m}-\bar{u}\|^2\)\\&=\frac{1}{2}(a-b)-\frac{1}{2}(a-b) =0.
\end{align*} 
In the last line, these four limits exist since $\{u_{k_n}\},\{u_{k_m}\}$ are extracted from the iterate sequence $\{u_k\}$, which has monotone decreasing distance to any true solution, as shown in \eqref{low-Fejer}. As these limits coincide, using a sub-subsequence argument, we conclude weak convergence of the full sequence to a solution, that is,
\begin{align*}
u_k\overset{k\to\infty}\wto u^*\quad\text{with}\quad \Ft(u^*)=\varphi.
\end{align*}

ii) For the convergence rate, with coercivity \eqref{low-ass-coercive} (thus $u^*$ is unique) and strong convergence of the image, we have strong convergence of $\uk$ to the exact $u^*$; in particular,
\begin{align*}
\|\uk-u^*\|^\alpha\leq C_{coe}\|\Ft(\uk)-\Ft(u^*)\|=C_{coe}\|\Ft(\uk)-\varphi\|\to0 \quad\text{as }k\to\infty.
\end{align*}
By Fej\'er monotonicity, we obtain the rate 
\begin{align}\label{low-ballx}
\|\uk-u^*\|^{2\alpha} < \frac{1}{k}\sum_{j=0}^k\|u_j-u^*\|^{2\alpha} \leq \frac{C_{coe}^2}{k} \sum_{j=0}^k\|\Ft(u_j)-\varphi\|^2\leq  \frac{C_{coe}^2/\mu_r}{k} \|u_0-u^*\|^2=\mathcal{O}\(\frac{1}{k}\)
\end{align} 
for the iterate sequence; for the image sequence, we similarly have
\begin{align}\label{low-bally}
\|\Ft(u_k)-\varphi\|=\left\|\int_0^1\Ft'(u^*-\lambda(\uk-u^*))\,d\lambda(\uk-u^*)\right\|\leq M_r \|\uk-u^*\| =\mathcal{O}\(\frac{1}{k^{(1/2\alpha)}}\).
\end{align}
\end{proof}

\begin{remark}[Coercivity - PDE theory]\label{rem-coerivity} Coercivity \eqref{low-ass-coercive} is one of the key elements to analyzing unique existence for PDEs \cite{Evans, Roubicek} . This condition is often stated for the nonlinear model $f$; after some derivation, it may then be transformed into semi-coercivty of the PDE model, which is $\Ft$ in our case.  

This, as alluded to in Section \ref{sec:intro}, is the motivating point for our study. We have employed well-posedness of the parameter-to-state map $S$, which is induced from coercivity of the model $\Ft$, to iteratively approximate the state $u$ with convergence guarantee and rate. 

At the end of this paper (Section \ref{sec:verify-coercive}, Proposition \ref{prop-PDEcoercive}, Remark \ref{rem-PDEcoercive-V}), we will estblish coercivity for some classes of nonlinear parabolic PDEs. This demonstrates that \eqref{low-ass-coercive} is indeed  a suitable and verifiable assumption for the lower-level problem. %And $C_{coe}$ holds uniformly for all $\theta\in\Bthe$ via boundedness of $\Bthe$.

%\blue{\eqref{low-ass-coercive} with $\alpha=1$ also holds if $F'(u^*)$ is injective and $u\mapsto F'(u)$ is Lipschitz continuous, or via source condition at all u (not only at $u^0)$) [Kindermann]}
\end{remark}

\begin{remark}[Coercivity - conditional stability]\label{rem-statbility}
Assumption \eqref{low-ass-coercive} means H\"older continuity of the inverse of $\Ft$ and can be extended to superadditive index functions $\phi$, that is,
\begin{equation}
\begin{split}\label{stability}
&\phi:[0,\infty)\to[0, \infty)\text{ superadditive, monotonically increasing, } \lim_{t\to 0}\phi(t)=0\\
&\|u-v\|^2\leq \phi\big(\|\Ft(u)-\Ft(v)\|^2\big) \quad \forall u, v\in\text{a given compact set } B.
\end{split}
\end{equation}
 Indeed, \eqref{low-ballx} then becomes
\begin{align*}
\|\uk-u^*\|^2&< \frac{1}{k}\sum_{j=0}^k\|u_j-u^*\|^{2} \leq \frac{1}{k} \sum_{j=0}^k\phi\left(\|\Ft(\uk)-\varphi\|^2\right)\leq \frac{1}{k} \phi\left(\sum_{j=0}^k\|\Ft(\uk)-\varphi\|^2\right)\\&\leq  \frac{1}{k} \phi\(\frac{1}{\mu_r}\|u_0-u^*\|^2\)=\mathcal{O}\(\frac{1}{k}\)
\end{align*} 
implying the linear rate. The extended version \eqref{stability} can be seen as the conditional stability property of a continuous and injective operator. 
%, provided that $\phi$ is independent of $\theta\in\Bthe$. 

There is also a relation to the variational source condition
\[\ell(u,u^*)\leq \mathcal{R}(u)-\mathcal{R}(u^*)+\phi\(\|\Ft(u)-\Ft(u^*)\|^2\),\] in which the link from the loss functional $\ell$ to the data penalty via the index function $\phi$ is the key to convergence rates in variational regularization \cite{Flemming}.
\end{remark}

Going back to our setting where $\Ft$ describes a PDE, % as explained in Remark \ref{rem-coerivity}, 
we focus on coercivity \eqref{low-ass-coercive} rather than on the extension \eqref{stability}. In fact, coercivity leads us to the following result.

\begin{lemma}[Strong tangential cone condition (sTC)]\label{coerive-alpha=1}
The strong tangential cone condition (see Remark \ref{rem-derivative}-ii)) holds under coercivity \eqref{low-ass-coercive} and the H\"older continuity assumption
\begin{align}\label{low-ass-Lips}
\|\Ft'(u)-\Ft'(v)\|_{\Vc\to\Uc^*\times H}\leq L_{F'}\|u-v\|_\Vc^\beta, \qquad \text{with }
\beta> \alpha-1\geq0
\end{align}
on the derivative, with some $L_{F'}\geq0$, for all $u,v\in\Bu$, provided the ball $\Bu$ satisfies
\begin{align}\label{r-small}
r<\(\frac{1+\beta}{L_{F'}C_{coe}}\)^{1/(1+\beta-\alpha)}.
\end{align}
In this scenario, all the claims in Theorem \ref{low-conv-rate} remain valid.
%These conditions are again assumed uniformly for all $\theta\in\Bthe$.
\end{lemma}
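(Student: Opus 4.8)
The plan is to obtain the strong tangential cone condition (sTC) \emph{directly} from a first-order Taylor expansion of $\Ft$: bound the Taylor remainder by the H\"older estimate \eqref{low-ass-Lips}, and then trade powers of $\|u-v\|_\Vc$ for $\|\Ft(u)-\Ft(v)\|_{\Ucs\times H}$ via coercivity \eqref{low-ass-coercive}. The smallness condition \eqref{r-small} is exactly what forces the resulting constant below $1$; sTC then implies the (wTC) hypothesis \eqref{low-ass-tcc} of Theorem \ref{low-conv-rate}, so all of its conclusions carry over.

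First I would note that, for fixed $\theta\in\Bthe$, $\Ft$ is Fr\'echet differentiable with $\Ft'(u)h=(\dot h+f'_u(\theta,u)h;\,-h|_{t=0})$ (the $A\theta$ term being independent of $u$), so that by convexity of $\Bu$ the exact integral remainder identity $\Ft(u)-\Ft(v)-\Ft'(u)(u-v)=\int_0^1\big(\Ft'(v+s(u-v))-\Ft'(u)\big)(u-v)\,ds$ holds for all $u,v\in\Bu$. Inserting \eqref{low-ass-Lips} pointwise in $s$, with $\|(v+s(u-v))-u\|_\Vc=(1-s)\|u-v\|_\Vc$, and integrating $\int_0^1(1-s)^\beta\,ds=\tfrac1{1+\beta}$, gives
\begin{align*}
\|\Ft(u)-\Ft(v)-\Ft'(u)(u-v)\|_{\Ucs\times H}\le \frac{L_{F'}}{1+\beta}\,\|u-v\|_\Vc^{1+\beta}.
\end{align*}
Since $1+\beta-\alpha>0$ by assumption, I would then split $\|u-v\|_\Vc^{1+\beta}=\|u-v\|_\Vc^{\alpha}\,\|u-v\|_\Vc^{1+\beta-\alpha}$, bound the first factor via coercivity \eqref{low-ass-coercive} by $C_{coe}\|\Ft(u)-\Ft(v)\|_{\Ucs\times H}$, and the surplus factor by $r^{1+\beta-\alpha}$ (the pairs relevant to Theorem \ref{low-conv-rate} lie at distance at most $r$ inside $\Bu$, since iterates stay in $\But$ and the solution lies in $\Buh$). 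This yields the strong tangential cone condition $\|\Ft(u)-\Ft(v)-\Ft'(u)(u-v)\|_{\Ucs\times H}\le \tilde c_{tc}\,\|\Ft(u)-\Ft(v)\|_{\Ucs\times H}$ with $\tilde c_{tc}:=\tfrac{L_{F'}C_{coe}}{1+\beta}\,r^{1+\beta-\alpha}$, and \eqref{r-small} is precisely the inequality $\tilde c_{tc}<1$, so $\tilde c_{tc}\in(0,1)$.

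It then remains to pass from sTC to the form \eqref{low-ass-tcc} actually used. By Cauchy--Schwarz and sTC,
\begin{align*}
\big(\Ft(u)-\Ft(v),\Ft'(u)(u-v)\big)&=\|\Ft(u)-\Ft(v)\|^2-\big(\Ft(u)-\Ft(v),\,\Ft(u)-\Ft(v)-\Ft'(u)(u-v)\big)\\
&\ge(1-\tilde c_{tc})\,\|\Ft(u)-\Ft(v)\|^2,
\end{align*}
so \eqref{low-ass-tcc} holds with $\mu_r:=2(1-\tilde c_{tc})-M_r^2$. Positivity of $\mu_r$ needs $\tilde c_{tc}<1-M_r^2/2$, which is automatic once one uses the same rescaling already invoked for $M_r<\sqrt2$ in Remark \ref{rem-derivative}(i): rescaling $\Ft\mapsto c\Ft$ multiplies $M_r$ by $c$ but leaves the product $L_{F'}C_{coe}$ (hence $\tilde c_{tc}$) unchanged. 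With \eqref{low-ass-derivative} and the just-verified \eqref{low-ass-tcc} in force, Theorem \ref{low-conv-rate} applies verbatim, giving the final assertion of the lemma.

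I expect no genuine obstacle here — the content is the two-line Taylor estimate plus the power-splitting trick — but two care points deserve attention: (a) justifying that $\Ft$ is $C^1$ with a H\"older-continuous derivative acting boundedly $\Vc\to\Ucs\times H$ and that the integral remainder formula is valid in these spaces, which rests on the Nemytskii differentiability of $f$ and the boundedness of $\tfrac{d}{dt}$ and of the initial-trace operator from the function-space setup of Section \ref{sec:setting}; and (b) tracking the constant so that it lands strictly below the threshold $1-M_r^2/2$ required by the Fej\'er-monotonicity step of Theorem \ref{low-conv-rate}, which is where \eqref{r-small} (together with the mild rescaling freedom) is spent.
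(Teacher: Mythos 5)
Your proposal is correct and follows essentially the same route as the paper's proof: the integral Taylor remainder, the H\"older bound $\tfrac{L_{F'}}{1+\beta}\|u-v\|_\Vc^{1+\beta}$, the power split $\|u-v\|_\Vc^{1+\beta}=\|u-v\|_\Vc^{\alpha}\|u-v\|_\Vc^{1+\beta-\alpha}$ combined with coercivity, and \eqref{r-small} forcing $\tilde c_{tc}<1$. The only addition is that you spell out the passage from sTC back to the wTC form \eqref{low-ass-tcc} together with the rescaling argument for positivity of $\mu_r$, which the paper instead delegates to Remark \ref{rem-derivative}(ii); that step is sound.
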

\begin{proof}
\eqref{low-ass-Lips} and coercivity as in \eqref{low-ass-coercive} yield the strong tangential cone condition
\begin{align*}
\|&\Ft(u)-\Ft(u^*)-\Ft'(u)(u-u^*)\|_{\Uc^*\times H}=\left\|\int_0^1\Ft'(u^*+\lambda(u-u^*))-\Ft'(u)\,d\lambda(u-u^*) \right\|\\
&\leq\left\|\int_0^1\Ft'(u^*+\lambda(u-u^*))-\Ft'(u)\,d\lambda\right\|_{\Vc\to \Uc^*\times H}\|u-u^*\|_\Vc \\
&\leq \frac{L_{F'}}{1+\beta}\|u-u^*\|_\Vc^{1+\beta} \\
&\leq \frac{L_{F'}C_{coe}}{1+\beta}\|u-u^*\|_\Vc^{1+\beta-\alpha}\|\Ft(u)-\Ft(u^*)\|_{\Uc^*\times H}\leq \frac{L_{F'}C_{coe}}{1+\beta}r^{1+\beta-\alpha}\|\Ft(u)-\Ft(u^*)\|_{\Uc^*\times H}\\
&=:\tilde{c}_{tc}\|\Ft(u)-\Ft(u^*)\|_{\Uc^*\times H},
\end{align*}
for $u,u^*\in\Bu$. Above, the strict inequality \eqref{r-small}
ensures that the tangential cone constant $\tilde{c}_{tc}$ is less than one.
\end{proof}
\begin{remark}[Convergence is highly feasible]\label{rem:conv-lower}
Lemma \ref{coerive-alpha=1} shows that with coercivity, one only needs H\"older continuity of the derivative as in \eqref{coerive-alpha=1} to confirm convergence of the lower-level iteration. Indeed, the special case of $\beta=1$, that is, that the derivative is Lipschitz continuous, is a natural assumption for convergence of gradient descent in many contexts, e.g. in convex optimization.
\end{remark}

Continuing along these lines, we observe some connections between the weak tangential cone condition and other renowned conditions yielding convergence of the gradient descent method, such as convexity and the Polyak-Lojasiewicz (PL) condition. For consider the cost functional
$ J(u):=\|\Ft(u)-\varphi\|^2 $.
Recall that $J$ is said to satisfy the PL condition \cite{KarimiNutiniSchmidt} if there exists a $\mu>0$ such that for all $u$,
\begin{align*}
\|j'\|_{\Vc^*}^2\geq\mu\(J(u)-\min_u J(u)\) \qquad j'\in\partial J(u).
\end{align*}

\begin{lemma}[Convexity versus wTC versus Polyak-Lojasiewicz condition]\label{cor:TCC-PL}
Assume that the equation $\Ft(u)=\varphi$ is solvable at $u^*\in\Bu$. Let $\Ft$ be differentiable with (rescaling as necessary) $\|\Ft'(u)\|\leq M_r<1, \forall u\in\Bu$.
We have the relation:
\begin{align*}
J\text{ is } convex\quad\Longleftrightarrow\quad &\text{wTC with } M_r^2+\mu_r=1\\ &\qquad\qquad\big\Downarrow\\&\text{wTC standard } \eqref{low-ass-tcc} \quad\overset{\text{ coercivity }\eqref{low-ass-coercive} \text{ with }\alpha=1} \Longrightarrow\quad  J \text{ is PL}
\end{align*}
\end{lemma}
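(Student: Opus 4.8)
The plan is to unpack each of the three implications separately, since the diagram asserts (a) the equivalence between convexity of $J$ and the borderline weak tangential cone condition with $M_r^2+\mu_r=1$, (b) the trivial vertical implication that this borderline (wTC) is a special case of the general (wTC) in \eqref{low-ass-tcc}, and (c) that (wTC) together with coercivity \eqref{low-ass-coercive} at $\alpha=1$ forces the Polyak--Lojasiewicz inequality for $J$. For (b) there is nothing to do: choosing $\mu_r = 1 - M_r^2 > 0$ (which is positive because $M_r<1$ after rescaling) shows the borderline condition is literally an instance of \eqref{low-ass-tcc}. So the real work is (a) and (c).

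For the equivalence in (a), I would first compute $J'(u)$. Since $J(u) = \|\Ft(u)-\varphi\|^2$ with $\Ft$ differentiable, the (Gateaux/Fréchet) derivative is $J'(u) = 2\,\Ft'(u)^*(\Ft(u)-\varphi)$, so that for any $u,v$,
\begin{align*}
\tfrac12\langle J'(u)-J'(v),\,u-v\rangle
&= \(\Ft'(u)^*(\Ft(u)-\varphi) - \Ft'(v)^*(\Ft(v)-\varphi),\, u-v\).
\end{align*}
That's not immediately the wTC expression, so instead I would work with the characterization of convexity of $J$ via nonnegativity of the Bregman-type remainder, $J(v) \ge J(u) + \langle J'(u), v-u\rangle$ for all $u,v\in\Bu$. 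Expanding both sides with $J=\|\Ft(\cdot)-\varphi\|^2$ and simplifying, the $\|\Ft(v)-\varphi\|^2$ and cross terms rearrange to exactly
\[
\(\Ft(u)-\Ft(v)-\Ft'(u)(u-v),\,\Ft(u)-\Ft(v)\) \le \tfrac12\|\Ft(u)-\Ft(v)\|^2 ,
\]
where I use that $\Ft'$ is linear so $\langle \Ft'(u)^*(\Ft(u)-\varphi), v-u\rangle = (\Ft(u)-\varphi, \Ft'(u)(v-u))$. By Remark \ref{rem-derivative}(ii) this inequality is precisely (wTC) with tangential cone constant $c_{tc} = 1 - \tfrac{M_r^2+\mu_r}{2} = \tfrac12$, i.e. $M_r^2+\mu_r=1$. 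Both directions of the equivalence follow from this same algebraic identity, read forwards and backwards over all $u,v\in\Bu$; I should be slightly careful to state that the convexity here is convexity on the convex set $\Bu$.

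For (c), assume (wTC) \eqref{low-ass-tcc} holds with some $\mu_r>0$ and coercivity \eqref{low-ass-coercive} with $\alpha=1$, i.e. $\|u-u^*\|_\Vc \le C_{coe}\|\Ft(u)-\varphi\|$ for all $u\in\Bu$ (note $u^*$ is then the unique solution). Apply (wTC) with $v=u^*$, so $\Ft(v)=\varphi$:
\begin{align*}
(M_r^2+\mu_r)\|\Ft(u)-\varphi\|^2 &\le 2\(\Ft(u)-\varphi,\,\Ft'(u)(u-u^*)\)
= 2\(\Ft'(u)^*(\Ft(u)-\varphi),\,u-u^*\)\\
&= \(J'(u),\,u-u^*\) \le \|J'(u)\|_{\Vc^*}\,\|u-u^*\|_\Vc
\le C_{coe}\,\|J'(u)\|_{\Vc^*}\,\|\Ft(u)-\varphi\|.
\end{align*}
Dividing by $\|\Ft(u)-\varphi\|$ (the case $\Ft(u)=\varphi$ being trivial, as both sides vanish and $u=u^*$) gives $\|J'(u)\|_{\Vc^*} \ge \tfrac{M_r^2+\mu_r}{C_{coe}}\|\Ft(u)-\varphi\|$, hence squaring, $\|J'(u)\|_{\Vc^*}^2 \ge \mu\(J(u)-\min J\)$ with $\mu = (M_r^2+\mu_r)^2/C_{coe}^2$ and $\min_u J = J(u^*) = 0$. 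Since $J$ is differentiable here, $\partial J(u) = \{J'(u)\}$ and the PL inequality holds as stated.

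The main obstacle is purely bookkeeping in step (a): getting the expansion of the convexity inequality for $J=\|\Ft(\cdot)-\varphi\|^2$ to collapse exactly onto the Remark \ref{rem-derivative}(ii) form of (wTC), keeping the inner-product/dual-pairing identifications straight and making sure the factor of $2$ lands so that the cone constant comes out to $\tfrac12$. Everything else is a one-line substitution ($v=u^*$) plus Cauchy--Schwarz and coercivity.
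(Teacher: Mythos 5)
Your parts (b) and (c) are correct and coincide with the paper's proof: (b) is immediate since $\mu_r=1-M_r^2>0$, and your PL argument in (c) --- test the wTC at $v=u^*$, identify $\langle J'(u),u-u^*\rangle=2\,(\Ft'(u)^*(\Ft(u)-\varphi),u-u^*)$, then apply Cauchy--Schwarz and coercivity --- is exactly the paper's computation, with the same constant $\mu=(M_r^2+\mu_r)^2/C_{coe}^2$.

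The gap is in part (a), and it is not ``purely bookkeeping''. Writing $a:=\Ft(u)-\varphi$, $b:=\Ft(v)-\varphi$ and $w:=\Ft'(u)(u-v)$, the first-order convexity inequality $J(v)\ge J(u)+\langle J'(u),v-u\rangle$ expands to $2(a,w)\ge\|a\|^2-\|b\|^2$, whereas the wTC with $M_r^2+\mu_r=1$ reads $2(a-b,w)\ge\|a-b\|^2$. The discrepancy between the two is exactly the inequality $(a-b-w,\,b)\ge 0$, i.e.\ $\bigl(\Ft(u)-\Ft(v)-\Ft'(u)(u-v),\,\Ft(v)-\varphi\bigr)\ge 0$, which has no reason to hold for general $v$; so the claimed ``exact rearrangement'' onto the Remark~\ref{rem-derivative}(ii) form with $c_{tc}=\tfrac12$ fails for general pairs $u,v\in\Bu$, and carrying out your bookkeeping would not close. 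The two inequalities coincide precisely when $b=0$, i.e.\ when $v=u^*$ is a solution --- and that is what the paper does: it tests the subgradient inequality only against $u^*$, so that $\|\Ft(v)-\varphi\|^2$ and the extra cross term vanish and the identification becomes a one-line rewriting. To repair your argument, restrict (a) to the pair $(u,u^*)$, reading ``wTC with $M_r^2+\mu_r=1$'' as the cone condition anchored at the solution (which is also how the paper's chain of equivalences should be understood); the fully general two-point equivalence you assert is simply not true.
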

\begin{proof}
i) As $J$ is Fr\'echet differentiable with
\[J'(u)=2\Ft'(u)^*(\Ft(u)-\varphi),\]
the subdifferential $\partial J(u)$ is a singleton. Furthermore, convexity yields
\begin{align*}
j'\in\partial J(u)&\Leftrightarrow j'(u-u^*)\geq J(u)-J(u^*)\\
&\Leftrightarrow 2\(\Ft'(u)^*(\Ft(u)-\varphi),u-u^*\) \geq \|\Ft(u)-\varphi\|^2-\|\Ft(u^*)-\varphi\|^2\\
&\Leftrightarrow 2\(\Ft(u)-\Ft(u^*),\Ft'(u)(u-u^*)\) \geq \|\Ft(u)-\Ft(u^*)\|^2
\end{align*}
for $\Ft(u^*)=\varphi$ and any $u\in\Vc$. Restricting our consideration to $u\in\Bu$, this is in fact the wTC \eqref{low-ass-tcc} with the fixed constant $(M_r^2+\mu_r)=1$; clearly, this implies the standard wTC.

ii) As $\Ft(u^*)=\varphi$, we have $\min_u J(u)=0$. Thus, if the wTC \eqref{low-ass-tcc} holds and coercivity \eqref{low-ass-coercive} is attained at $\alpha=1$, then
\begin{align*}
\|J'(u)\|_{\Vc^*}&=\sup_{\|v\|_\Vc=1}2\(\Ft'(u)^*(\Ft(u)-\varphi),v\)\geq\frac{2}{\|u-u^*\|}\(\Ft(u)-\Ft(u^*),\Ft'(u)(u-u^*)\)  \\
&\geq \frac{M_r^2+\mu_r}{\|u-u^*\|}\|\Ft(u)-\Ft(u^*)\|^2\geq \frac{M_r^2+\mu_r}{C_{coe}}\|\Ft(u)-\varphi\|\\
&=:\sqrt{\mu J(u)},
\end{align*}
showing that the PL condition is fulfilled.
\end{proof}

Going back to the central point of this section, monotonicity of the error sequence is the main character in Theorem \ref{low-conv-rate}; however, the same question for the image sequence in the nonlinear case, has, to the best of the author's knowledge, yet to be discussed. The following proposition gives sufficient conditions for monotonicity of the residual. % for nonlinear problems. 
% In the context of convex optimization, monotonicity of the image or residual sequence can be attained under Lipschitz derivative \cite{ClasonValkonen}. This inspires us to the consideration in the following.

\begin{proposition}[Monotonic residual]\label{low-monotone-res}
Let the conditions in Theorem \ref{low-conv-rate} be fulfilled, and assume moreover that \eqref{low-ass-coercive} holds with $\alpha=1$ for any\footnote{That is, not only at a solution $v=u^*$.} $u,v\in\Bu$
%\begin{align}\label{low-ass-coerive1}
%C_{coe}\|\Ft(u)-\Ft(u^*)\|\geq \|u-u^*\|
%\end{align}
Assume that $\Ft'(u_k)^*:\Vc\to\Uc^*\times H$ is injective, and moreover
\begin{align}\label{low-ass-mu}
C_{coe}^2\(C_{coe}^2M_r^2-1\)<\mu_r.
\end{align}
Then, the residual sequence $\|\Ft(\uk)-\varphi\|$ decreases monotonically.
\end{proposition}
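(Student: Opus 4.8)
The natural approach is to expand the squared residual difference $\|\Ft(\ukn)-\varphi\|^2 - \|\Ft(\uk)-\varphi\|^2$ and show it is nonpositive under the stated hypotheses. First I would write $\Ft(\ukn)-\varphi = \Ft(\uk)-\varphi + \big(\Ft(\ukn)-\Ft(\uk)\big)$ and, using the mean value theorem on $\Ft$ together with the Landweber update $\ukn-\uk = -\Ft'(\uk)^*(\Ft(\uk)-\varphi)$, represent $\Ft(\ukn)-\Ft(\uk)$ in terms of $-\Ft'(\uk)\Ft'(\uk)^*(\Ft(\uk)-\varphi)$ plus a tangential-cone remainder. Abbreviating $r_k := \Ft(\uk)-\varphi$ and $T_k := \Ft'(\uk)$, the cross term produces $-2(T_kT_k^* r_k, r_k) = -2\|T_k^* r_k\|^2$, and the quadratic term contributes $\|\Ft(\ukn)-\Ft(\uk)\|^2$, which by the derivative bound \eqref{low-ass-derivative} is at most $M_r^2\|\ukn-\uk\|^2 = M_r^2\|T_k^* r_k\|^2$ up to the tangential-cone correction; so the leading behaviour is governed by $(M_r^2 - 2)\|T_k^*r_k\|^2 \le 0$, which already gives the weak inequality, and the point is to control the remainder so the inequality survives.

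The key step is converting the bound into one involving only $\|r_k\|$ and $\|r_{k+1}\|$. Here I would use coercivity \eqref{low-ass-coercive} with $\alpha=1$ in the stronger form assumed (valid for all $u,v\in\Bu$, not merely at the solution): it gives $\|\ukn-\uk\|_\Vc \le C_{coe}\|\Ft(\ukn)-\Ft(\uk)\|$, i.e. $\|\ukn-\uk\| \le C_{coe}\big(\|r_{k+1}\|+\|r_k\|\big)$. Combined with $\|\ukn-\uk\| = \|T_k^* r_k\|$, this controls $\|T_k^* r_k\|$ from below in a way compatible with the upper bound; dually, injectivity of $T_k^* = \Ft'(\uk)^*$ (plus the finite-dimensional-like estimate it yields together with the derivative bound) lets one bound $\|r_k\|$ by a multiple of $\|T_k^* r_k\|$ — concretely, injectivity of $T_k^*$ on the relevant ball, together with coercivity, should give $\|T_k^* r_k\| \ge \frac{1}{C_{coe}}\|\uk-u^*\|$ or a comparable lower bound, which one then turns around via $\|r_k\|\le M_r\|\uk-u^*\|$. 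Feeding these into $(M_r^2-2)\|T_k^*r_k\|^2 + (\text{tangential-cone remainder in }\|r_k\|^2)$ and collecting constants, the condition \eqref{low-ass-mu}, namely $C_{coe}^2(C_{coe}^2M_r^2-1)<\mu_r$, is exactly what forces the net coefficient of $\|r_k\|^2 - \|r_{k+1}\|^2$ (or rather of the residual decrement) to have the right sign.

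I would organize the estimate so that it reads: $\|r_{k+1}\|^2 - \|r_k\|^2 \le -\,c\,\|r_k\|^2 + (\text{nonneg. multiple of }\|r_{k+1}\|\|r_k\|\text{ or }\|r_{k+1}\|^2)$ with $c>0$ under \eqref{low-ass-mu}, and then absorb the mixed/$\|r_{k+1}\|^2$ terms by a standard Young's inequality plus rearrangement to conclude $\|r_{k+1}\| \le \|r_k\|$. Throughout, the fact (from Theorem \ref{low-conv-rate}) that the whole sequence $\{u_k\}$ stays in $\Bu$ is what licenses the use of \eqref{low-ass-derivative}, the wTC \eqref{low-ass-tcc}, the strong (sTC)-type bound from Lemma \ref{coerive-alpha=1}, and the all-pairs coercivity — these are the only ingredients needed.

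\textbf{Main obstacle.} The delicate point is the two-sided comparison between $\|T_k^* r_k\|$, $\|r_k\|$, and $\|\ukn-\uk\|$: the upper bound on the quadratic term wants $\|\ukn-\uk\|$ small, while closing the argument wants $\|T_k^* r_k\|$ bounded below by $\|r_k\|$, and these pull in opposite directions. Reconciling them is precisely where injectivity of $\Ft'(\uk)^*$ and the quantitative constant in \eqref{low-ass-mu} must be used sharply; getting the constants to line up — rather than merely getting a decrease "for $k$ large" — is the crux, and I expect the bookkeeping around which norm ($\|r_k\|$ vs.\ $\|\uk-u^*\|$) to pivot on to be the part requiring the most care.
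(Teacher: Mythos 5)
There is a genuine gap here, and it sits exactly where you locate your ``main obstacle'': the proposal correctly identifies the tension between wanting $\|\ukn-\uk\|=\|\Ft'(\uk)^*(\Ft(\uk)-\varphi)\|$ small (to control the quadratic term) and wanting it bounded below by a multiple of the residual (to close the argument), but it does not supply the mechanism that resolves it. The paper's resolution is the decisive missing idea: the all-pairs coercivity with $\alpha=1$ gives, by differencing, $\|\Ft'(u)h\|\geq \frac{1}{C_{coe}}\|h\|_\Vc$ for all $h$, hence $\Ft'(u)$ has closed range by the open mapping theorem; then closed range of the adjoint together with the \emph{assumed injectivity of $\Ft'(\uk)^*$} yields the quantitative lower bound $\|\Ft'(\uk)^*y\|\geq \frac{1}{C_{coe}}\|y\|$ for all $y\in\Uc^*\times H$. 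Your sketch gestures at ``injectivity together with coercivity should give $\|T_k^*r_k\|\geq \frac{1}{C_{coe}}\|\uk-u^*\|$ or a comparable lower bound,'' but that inequality is not the right one (it compares the wrong quantities and carries the wrong power of $C_{coe}$), and without the closed-range argument one cannot recover the precise constants; since the whole content of the proposition is that the constants line up to give \eqref{low-ass-mu}, ``a comparable lower bound'' is not enough. Concretely, \eqref{low-ass-mu} emerges from the chain $\frac{1}{C_{coe}^2}\|\Ft(\ukn)-\varphi\|^2+\frac{\mu_r}{C_{coe}^4}\|\Ft(\uk)-\varphi\|^2\leq M_r^2\|\Ft(\uk)-\varphi\|^2$, where the adjoint lower bound is used twice (once on $\Ft(\ukn)-\varphi$ and once, combined with coercivity, on the step $\uk-\ukn$); none of this is visible in the proposal.

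A second, independent problem is the decomposition itself. You expand $\|\Ft(\ukn)-\varphi\|^2-\|\Ft(\uk)-\varphi\|^2$ with the cross term $2(\Ft(\uk)-\varphi,\Ft(\ukn)-\Ft(\uk))$ and propose to control the linearization remainder by a ``tangential-cone correction,'' invoking the sTC of Lemma \ref{coerive-alpha=1}. But the proposition only assumes the hypotheses of Theorem \ref{low-conv-rate}, i.e.\ the \emph{weak} tangential cone condition \eqref{low-ass-tcc}; the sTC requires in addition the H\"older continuity \eqref{low-ass-Lips}, which is not assumed here. The wTC only bounds the inner product of the remainder against $\Ft(\uk)-\Ft(\ukn)$, not against $\Ft(\uk)-\varphi$, so your cross term cannot be handled with the available hypotheses. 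The paper avoids this by starting from the wTC applied to the pair $(\uk,\ukn)$ — whose right-hand side equals $2\big(\Ft'(\uk)^*(\Ft(\uk)-\Ft(\ukn)),\Ft'(\uk)^*(\Ft(\uk)-\varphi)\big)$ because of the Landweber update — and then using the identity $2(a,b)=\|a\|^2+\|b\|^2-\|a-b\|^2$ to make the term $-\|\Ft'(\uk)^*(\Ft(\ukn)-\varphi)\|^2$ appear; that term is then pushed below by the adjoint lower bound. If you want to salvage your route, you would either have to add \eqref{low-ass-Lips} to the hypotheses or restructure the cross term so that only the wTC is needed — at which point you essentially arrive at the paper's argument.
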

\begin{proof}
Consider $u\in \inter\Bu$, the interior of $\Bu$, and $v:=u+\epsilon h\in \inter \Bu$ with sufficiently small $\epsilon>0$. Employing the coercivity condition \eqref{low-ass-coercive}, we have
\begin{align*}
\|\Ft'(u)h\|_{\Uc^*\times H}=\lim_{\epsilon\to 0}\frac{\|\Ft(u)-\Ft(u+\epsilon h)\|_{\Uc^*\times H}}{\epsilon}\geq\frac{1}{C_{coe}}\|h\|_{\Vc} \qquad \forall h\in\Vc.
\end{align*}
The open map theorem implies that at any $u\in \inter \Bu$, the bounded linear form $\Ft'(u)\in\Lc(\Vc,\Uc^*\times H)$ has closed range. Equivalently, the  adjoint $\Ft'(u)^*$ also has closed range. This, together with injectivity, gives us that $\Ft'(u_k)^*$ is also bounded below by the same constant
\begin{align}\label{closed-range}
\|\Ft'(u)^*y\|_{\Vc}\geq \frac{1}{C_{coe}}\|y\|_{\Uc^*\times H} \qquad \forall y\in \Uc^*\times H.
\end{align}
Note that this estimate holds uniformly for all $\Ft$ with $\theta\in\Bthe$.

Next, using the weak tangential cone condition \eqref{low-ass-tcc} and the parallelogram law, one gets
\begin{align*}
&(M_r^2+\mu_r)\|\Ft(\ukn)-\Ft(\uk)\|^2\leq 2\(\Ft(\uk)-\Ft(\ukn),\Ft'(\uk)(\uk-\ukn)\)\\
&=2\big(\Ft'(\uk)^*(\Ft(\uk)-\Ft(\ukn)),\Ft'(\uk)^*(\Ft(\uk)-\varphi)\big)\\
%&=2\|\Ft'(\uk)^*(\Ft(\uk)-\varphi)\|^2 + 2\big(\Ft'(\uk)^*(\varphi-\Ft(\ukn)),\Ft'(\uk)^*(\Ft(\uk)-\varphi)\big)\\
%&=2\|\Ft'(\uk)^*(\Ft(\uk)-\varphi)\|^2\\&\quad + \|\Ft'(\uk)^*(\Ft(\uk)-\Ft(\ukn))\|^2 - \|\Ft'(\uk)^*(\Ft(\ukn)-\varphi)\|^2-\|\Ft'(\uk)^*(\Ft(\uk)-\varphi)\|^2 \\
&=\|\Ft'(\uk)^*(\Ft(\uk)-\varphi)\|^2 - \|\Ft'(\uk)^*(\Ft(\ukn)-\varphi)\|^2 + \|\Ft'(\uk)^*(\Ft(\uk)-\Ft(\ukn))\|^2.
\end{align*}
Applying boundedness of the derivative as in \eqref{low-ass-derivative} to the first and to the third term of the right hand side, then moving the third term to the left hand side, we have
\begin{align*}
\|\Ft'(\uk)^*(\Ft(\ukn)-\varphi)\|^2+\mu_r\|\Ft(\ukn)-\Ft(\uk)\|^2 \leq M_r^2\|(\Ft(\uk)-\varphi)\|^2.
\end{align*}
Recall that as long as the initial guess satisfies $u_0\in B_{r/2}(u^*)$, all the iterates $u_k$ remain in the ball. Moreover, $u_k$ belongs to the interior of $B_{r/2}(u^*) $ due to Fe\'jer monotonicity \eqref{low-Fejer}; thus, $u_k\in \inter  B_{r/2}(u^*)\subset \inter \Bu$. This enables us to apply the lower bound \eqref{closed-range} to the first term in the above estimate, with $u=u_k$, $y=\Ft(\ukn)-\varphi$, thus obtaining
\begin{align}\label{est-residual}
\frac{1}{C_{coe}^2}\|\Ft(\ukn)-\varphi\|^2+\mu_r\|\Ft(\ukn)-\Ft(\uk)\|^2 \leq M_r^2\|\Ft(\uk)-\varphi)\|^2.
\end{align}
Here, we notice that the lower bound $1/C_{coe}$ cannot exceed $M_r$, the upper bound for $\|\Ft'(u)\|$; hence, we cannot claim decay of the image sequence yet. Therefore, we further estimate the middle term of \eqref{est-residual}, again invokinging coercivity \eqref{low-ass-coercive} and the lower bound \eqref{closed-range}
\begin{align*}
\mu_r\|\Ft(\ukn)-\Ft(\uk)\|^2&\geq \frac{\mu_r}{C_{coe}^2}\|\uk-\ukn\|^2=\frac{\mu_r}{C_{coe}^2}\|\Ft'(\uk)^*(\Ft(\uk)-\varphi)\|^2\\&\geq \frac{\mu_r}{C_{coe}^4}\|\Ft(\uk)-\varphi\|^2.
\end{align*}
Substituting this into \eqref{est-residual}, we arrive at
\begin{align*}
\|\Ft(\ukn)-\varphi\|^2\leq C_{coe}^2\(M_r^2-\frac{\mu_r}{C_{coe}^4}\)\|\Ft(\uk)-\varphi)\|^2< \|\Ft(\uk)-\varphi)\|^2,
\end{align*} 
where the strict inequality is achieved according to the constraint \eqref{low-ass-mu}. This shows the claimed monotonicity of the residual sequence.

\end{proof}

%\begin{remark}[Closed range of $\Ft'(u_k)$ and its adjoint]
%Closed range of the linear operator $\Ft'(u_k)$ and its adjoint $\Ft'(u_k)^*$ \eqref{closed-range} is a natural property. For inverse problems with nonlinear PDEs, as we have seen in Section \ref{sec:bi-level} the linearized equation and the adjoint equation (c.f. \eqref{low-ass-adjointeq} in Proposition \ref{lem:adjoint}) appear in the iterative procedures, hence their well-posedness needs to be studied at some point. As theses PDEs are linear, one often employs Lax-Milgram theorem, in which closed range of the linear forms is the key step. Here, closed range is obtained from coercivity \eqref{low-ass-coercive}.
%\end{remark}
\begin{remark}[Boundedness from below of $\Ft'(u_k)$ and its adjoint]
Boundedness from below of $\Ft'(u_k)$ and its adjoint $\Ft'(u_k)^*$ is in fact the same as invertibility of $\Ft'(u_k)$, thus of $\Ft'(u_k)^*$.
%It is equivalent to injectivity and closed range; together with surjectivity is in fact equivalent to invertibility of $\Ft'(u_k)$, thus of its adjoint. 
This ties to unique existence analysis of the linearized PDE \eqref{low-sensitive-eq} and the adjoint equation \eqref{low-ass-adjointeq}, which appears in the iterative Algorithm \ref{algorithm}.
\end{remark}

We conclude this section by discussing some links from the lower-level to the upper-level, stepping toward parameter recovery.

\begin{discussion}\,
\begin{itemize}
\item
Convergence rate of the state approximation error $\epk:=\|u_k-u^*\|_\Uc$ in the $\Uc$-norm is sufficient to carry out the analysis of the upper-level (c.f. Remark \ref{rem-derivative}-iii)). In some practical examples, this simplifies verification of the coercivity assumption \eqref{low-ass-coercive}.
\item
Consider the lower-level problem $\Ft(u)=\varphi$, $\theta\in\Bthe$. As long as the exact state $u^*$ and the initialization $u_0$, which might depend on $\theta$, lie in $\Buh$, then all iterates will remain in the ball $\Bu$ (Theorem \ref{low-conv-rate}), which is independent of $\theta$. This indicates that if we ensure that in the upper-level problem, the parameter iterates $\theta_j$ do not escape the ball $\Bthe$, then the lower-level iterates $u_k$ are guaranteed to remain in the ball $\Bu$.
\item
The lower-level problem is noise-free, as $\varphi$ is the residual of the PDE model. We can therefore iterate $u_k$ until the error $\epk$ is acceptably small, then input this approximate state into the upper-level iteration. This gives rise to a lower-level stopping rule $k\leq K$, where the stopping index $K$ depends on $j$-th iterate of the upper-level iteration and the noise level $\delta$. We will determine $K(j,\delta)$ in the next section. 

\item
The state approximation error $\epk$ in practical experiments can be observed via plotting the PDE residual. Indeed, they behave in a similar manner (Theorem \ref{low-conv-rate}, ii) rate), and the PDE residual even decays monotonically (Proposition \ref{low-monotone-res}), echoing Fej\'er monotonicity of the error (Theorem \ref{low-conv-rate}, \eqref{low-Fejer}).
\end{itemize}
\end{discussion}
\medskip

We summarize the assumptions that have been used.
\begin{assumption}[Bi-level algorithm]\label{summary-ass-bi}
For all $\theta$, $\hat{\theta}\in\Bthe$ and for all $u$, $\hat{u}\in\Bu$, the following all hold:
\begin{enumerate}
\item $S'$ is uniformly bounded, with
\[
\|S'(\theta)\|_{X\to\Uc}\leq M_S
\]
\item The solution $z$ to the adjoint PDE depends continuously on the source $h$, that is,
\begin{align*}
\begin{cases}
&-\dot{z}+f'_u(\theta,u)^\star z= h \\
&z(T)=0
\end{cases} \quad\text{implies}\quad \|z\|_\Uc+\|z(0)\|_H\leq \Cfu \|h\|_\Ucs.
\end{align*}
\item The derivatives $f'_\theta$, $f'_u$ satisfy the Lipschitz condition
\begin{align*}
&\|f'_\theta(\theta,u)-f'_\theta(\hat{\theta},\hat{u})\|_{X\to\Ucs}+\|f'_u(\theta,u)-f'_u(\hat{\theta},\hat{u})\|_{\Uc\to\Ucs}\leq \Ldf \(\|\theta-\hat{\theta}\|_X+\|u-\hat{u}\|_\Uc\)
\end{align*}
\end{enumerate}
for some positive constants $M_S,\Cfu,\Ldf$.
\end{assumption}
\smallskip

\begin{assumption}[Lower-level iteration]\label{summary-ass-low}
$\Ft(u):=F(\theta,u)=\varphi$ is solvable at $u^*$. Moreover, for all $\theta\in\Bthe$ and for all $u$, $\hat{u}\in\Bu$, the following all hold:
\begin{enumerate}\setcounter{enumi}{3}
\item $\Ft'$ is bounded, with
\[\|\Ft'(u)\|_{\Vc\to\Ucs\times H}\leq M_r<\sqrt{2},\]
\item Either the weak tangential cone condition
\[2\(\Ft(u)-\Ft(\hat{u}),\Ft'(u)(u-\hat{u})\)_{\Ucs\times H}\geq (M_r^2+\mu_r)\|\Ft(u)-\Ft(\hat{u})\|_{\Ucs\times H}^2,\]
holds, or $\Ft'$ satisfies the H\"older condition (see Lemma \ref{coerive-alpha=1})
\begin{align}
\|\Ft'(u)-\Ft'(v)\|_{\Vc\to\Uc^*\times H}\leq L_{F'}\|u-v\|_\Vc^\beta,\qquad \beta>0.
\end{align}
\item $\Ft$ is coercive, with
\[C_{coe}\|\Ft(u)-\Ft(u^*)\|_{\Ucs\times H}\geq \|u-u^*\|_\Vc^\alpha\quad (\text{or } \geq \|u-u^*\|_\Uc^\alpha),\qquad\alpha\geq1\]
\end{enumerate}
with positive constants $L_{F'}$, $C_{coe}$, $M_r$, $\mu_r$.%\\[1ex]
%Optional: Proposition \ref{low-monotone-res} for monotonic residual.
\end{assumption}

%%%%%%%%%%%%%%%%%%%%%%%%%%%%%%%%%%%%%%%%%%
\section{The upper-level iteration}\label{sec:upper}

In the preceeding, we approximated the PDE solution by the lower-level iteration. The approximate state $u^j_{K(j)} = :\Stil(\thekj)$ and the associated approximate adjoint will be now fed into the upper-level of Algorithm \ref{algorithm} to update the parameter $\thekj$. This section studies convergence of the upper-level iteration under the influence of the noise level $\delta$ of the measurement, state approximation and adjoint approximation errors. In particular, we discuss two types of stopping rule: a posterior choice inspired by the classical result \cite{KalNeuSch08}, and a prior choice inspired by the newer result \cite{Kindermann17}. We put more emphasis on the latter.

Let us recall from \eqref{original-red-forward} that the reduced map $G=L\circ S$ is a composition of the linear, bounded observation $L$ and the nonlinear, differentiable parameter-to-state-map $S$. The forward map $G$ has uniformly bounded derivative, satisfying
\begin{equation}\label{cond-upper-boundedG'}
\begin{split}
\|G'(\theta)\|_{X\to Y}=\|L\circ S'(\theta)\|_{X\to Y}\leq \|S'(\theta)\|_{X\to\Uc}\|L\|_{\Uc\to Y}\leq M_S\|L\|=:M_R
\end{split}
\end{equation}
for all $\theta\in\Bthe$, with $M_S$ as in Assumption \ref{summary-ass-bi}.
\begin{assumption}[Upper-level iteration]\label{summary-ass-up}
\begin{enumerate}\,\setcounter{enumi}{6}
\item Strong tangential cone condition (sTC): for all $\theta$, $\hat{\theta}\in\Bthe$, the relations 
\begin{equation}\label{cond-upper-tcc}
\begin{split}
2\(G(\theta)-G(\hat{\theta}),G'(\theta)(\theta-\hat{\theta})\)_\Yc&\geq(M_R^2+\mu_R)\|G(\theta)-G(\hat{\theta})\|_\Yc^2,\\[1ex]
\|G'(\theta)(\theta-\hat{\theta})\|_\Yc&\leq K_R\|G(\theta)-G(\hat{\theta})\|_\Yc
\end{split}
\end{equation}
\end{enumerate}
hold for some positive constants $M_R$, $\mu_R$, $K_R$ with $(M_R^2+\mu_R)<2$.
\end{assumption}
The first part of \eqref{cond-upper-tcc} is the weak the tangential cone condition, while the second part strengthens the assumption (c.f. Discussion \ref{rem-derivative}, ii)). Assumption \eqref{cond-upper-tcc} can also be written as
\begin{equation}\label{cond-upper-tcc1}
\begin{split}
\|G(\theta)-G(\hat{\theta})-G'(\theta)(\theta-\hat{\theta})\|_\Yc\leq C_{tc}\|G&(\theta)-G(\hat{\theta})\|_\Yc
\end{split}
\end{equation}
for all $\theta$, $\hat{\theta}\in\Bthe$, with $C_{tc}:=1-(M_R^2+\mu_R)/2\in(0,1)$ and $K_R=1+C_{tc}$. Unlike the tangential cone condition w.r.t $u$ of the lower-level problem, the derivative here is taken w.r.t the parameter $\theta$.

Algorithm \ref{algorithm} presents the upper-level iteration 
\begin{align}\label{al-upper}
&\widetilde{\theta}^{\delta,0}\in\Bthe  \nonumber\\
& \thekjn = \thekj -S'(\thekj)^*L^*\(L\Stil(\thekj)-\ydel\) \qquad j\leq \jmax(\delta),
\end{align}
where $\Stil(\thekj)$ is approximated according to the lower-level iteration \eqref{al:approx-S} and $S'(\thekj)^*$ is computed according to the adjoint process \eqref{al:approx-adjoint}.

\subsection{A posterior stopping rule}\label{sec:upper-posterior}
In this section, we analyze %monotonicity of the parameter error sequence under an appropriate choice for 
an a posteriori choice for the stopping index $\jmax$ of the upper-level iteration. The analysis also yields a stopping rule $K(j)$ for the lower-level problem. %Recalling that $\epk$ denotes the state approximation error of step $K(j)$, and as proven in Section \ref{sec:lowiter} this error can be made arbitrarily small $\epk=\mathcal{O}\(\frac{1}{K(j)^{1/(2\alpha)}}\)$ as long as $K(j)$ is large enough, where $\alpha$ is the exponent in the coercivity condition (Assumption \ref{summary-ass-low}).

%Before proceeding, we remind some notation: $\theta^\dagger$ is the exact parameter of \eqref{original-pde}-\eqref{original-measure}. The circumflex $\widetilde{(\cdot)}$ indicates the terms associating with the state approximation, e.g. $\widetilde{\theta},\widetilde{S}$, in Algorithm \ref{algorithm}.  

\begin{proposition}[Boundedness - Fej\'er monotnicity]\label{prop-upper-Fejer} 
Let Assumptions \ref{summary-ass-bi}, \ref{summary-ass-low}, \ref{summary-ass-up} hold, and let $\thekj\in\Bthe$ be given. Then $\thekjn$ in \eqref{al-upper} is a better approximation, i.e. \[\|\thekjn-\thedag\|<\|\thekj-\thedag\|\] 
(in particular, the whole sequence remains in the ball $\Bthe$) if, for some $\gamma(j)\geq 0$, the two following conditions are satisfied:

\begin{enumerate}
\item The lower-level \eqref{al:approx-S} is iterated until step $K(j)$, such that the state approximation error satisfies
\begin{align}\label{cond-lower-stopindex}
\mathcal{O}\(\frac{1}{K(j)^{1/(2\alpha)}}\) = \epk\leq\gamma(j)\|LS(\thekj)-y^\delta\|
\end{align}
with some nonnegative and sufficiently small $\gamma(j)$.

\item In the upper-level, the residual satisfies the relation
\begin{align}\label{upper-posterior-choice}
& \|LS(\thekj)-y^\delta\|>\delta \,\Gamma(j) > 0, \\
\Gamma(j) & := 2\dfrac{1+C_{tc}+\|L\|K_R{\gamma(j)}}{2-(1+\epsilon)M_R^2-2C_{tc}- \|L\|\big( 2K_R{\gamma(j)}+M_R^2\|L\|{\gamma(j)^2} \big)\(1+\frac{1}{\epsilon}\)} \nonumber
\end{align}
for some $\epsilon>0$ %and the denominator of \eqref{upper-posterior-choice}
%\begin{align}\label{cond-constant-relation}
%2-(1+\epsilon)M_R^2-2C_{tc}- \|L\|\big( 2K_R{\gamma(j)}%+M_R^2{\gamma(j)^2} \big)\(1+\frac{1}{\epsilon}\)\quad>0
%\end{align}
with $C_{tc}$ as in \eqref{cond-upper-tcc1} and $M_R$, $\mu_R$, $K_R$ as in \eqref{cond-upper-tcc}.

\end{enumerate}

\end{proposition}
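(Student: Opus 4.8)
The plan is to expand the squared error $\|\thekjn - \thedag\|^2$ using the Landweber update \eqref{al-upper}, exactly as in the lower-level Fej\'er computation \eqref{low-Fejer}, but now carrying along two perturbations: the noise $\delta$ (difference between $y$ and $\ydel$) and the state/adjoint approximation errors quantified in Lemma \ref{lem-outputerror} and Lemma \ref{lem-adjointerror}. Writing $g:=S'(\thekj)^*L^*(L\Stil(\thekj)-\ydel)$ for the update direction, one has
\[
\|\thekjn-\thedag\|^2-\|\thekj-\thedag\|^2 = -2\(L\Stil(\thekj)-\ydel,\ LS'(\thekj)(\thekj-\thedag)\) + \|g\|^2 .
\]
The first step is to replace the \emph{approximate} objects $\Stil(\thekj)$ and $S'(\thekj)^*$ (the latter being the approximate adjoint, cf. Remark \ref{rem:algorithm}) by the \emph{exact} ones $S(\thekj)$, $S'(\thekj)^*$ wherever needed to apply the tangential cone condition \eqref{cond-upper-tcc}, introducing the error terms $\|L\|\epk$ (output error, Lemma \ref{lem-outputerror} with $\theta^j=\thekj$) and $C_{\nabla f,A}\epk$ (adjoint error, Lemma \ref{lem-adjointerror} with $\theta^j=\thekj$, so the $\|\theta^j-\thekj\|$ terms vanish). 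I would also split $L\Stil(\thekj)-\ydel = (LS(\thekj)-\ydel) + L(\Stil(\thekj)-S(\thekj))$ and insert $\mp LS(\thedag)\pm y$ to bring in $G(\thekj)-G(\thedag)$ and $y-\ydel$.

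The second step is the core estimate: bound the inner-product term from below using the sTC \eqref{cond-upper-tcc}. Using the weak part, $2(G(\thekj)-G(\thedag), G'(\thekj)(\thekj-\thedag)) \geq (M_R^2+\mu_R)\|G(\thekj)-G(\thedag)\|^2$, and the strong part to control the cross terms involving $y-\ydel$ and the approximation errors. For $\|g\|^2$, I would use $\|g\| \leq \|S'(\thekj)^*L^*\|\,\|L\Stil(\thekj)-\ydel\| \leq M_S\|L\| \cdot(\text{residual} + \text{errors})$ after the usual split — but more carefully, expand $\|g\|^2$ in terms of $\|G'(\thekj)(\thekj-\thedag)\|^2$-type quantities so that the $K_R$ constant from \eqref{cond-upper-tcc} can be used. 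This is where the Young-type parameter $\epsilon>0$ enters: each cross term of the form (residual)$\times$(error) is split via $2ab \leq \epsilon a^2 + \epsilon^{-1}b^2$, which is precisely why $\epsilon$ and the factors $(1+\epsilon)$, $(1+1/\epsilon)$ appear in the denominator of $\Gamma(j)$. After collecting, the right-hand side should take the shape $-c_1\|G(\thekj)-G(\thedag)\|^2 + c_2\,\|G(\thekj)-G(\thedag)\|\cdot\delta + (\text{lower order})$, and one rewrites everything in terms of the \emph{observed} residual $\|LS(\thekj)-\ydel\|$, using $\epk \leq \gamma(j)\|LS(\thekj)-\ydel\|$ from \eqref{cond-lower-stopindex} to absorb all approximation-error contributions into multiples of the residual — this is what produces the $\|L\|K_R\gamma(j)$ and $M_R^2\|L\|^2\gamma(j)^2$ terms in $\Gamma(j)$, and also lets one relate $\|G(\thekj)-G(\thedag)\|$ to $\|LS(\thekj)-\ydel\|$ up to $\delta$ and $\epk$.

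The final step is to check that the net coefficient of the squared residual is strictly negative, i.e. that the descent quantity is positive, precisely when $\|LS(\thekj)-\ydel\| > \delta\,\Gamma(j)$ with $\Gamma(j)$ as defined in \eqref{upper-posterior-choice}: one solves the resulting quadratic inequality in the residual for the threshold, and the condition $(1+\epsilon)M_R^2 + 2C_{tc} + \|L\|(2K_R\gamma(j)+M_R^2\|L\|\gamma(j)^2)(1+1/\epsilon) < 2$ — which must hold for $\gamma(j)$ sufficiently small and $\epsilon$ suitably chosen, since $M_R^2+2C_{tc}<2$ by $(M_R^2+\mu_R)<2$ and $C_{tc}=1-(M_R^2+\mu_R)/2$ — guarantees the denominator of $\Gamma(j)$ is positive. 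Strict decrease $\|\thekjn-\thedag\|<\|\thekj-\thedag\|$ then gives by induction that the whole sequence stays in $\Bthe$ (so the earlier assumptions remain applicable along the iteration), and hence the lower-level iterates stay in $\Bu$. The main obstacle I anticipate is the bookkeeping in the $\|g\|^2$ expansion: one must be careful to use the approximate adjoint consistently (the update really uses $S'(\thekj)^*$-approximate, not exact), so the adjoint error Lemma \ref{lem-adjointerror} must be invoked both inside the inner-product term and inside $\|g\|^2$, and the $\epsilon$-splitting must be organized so that exactly the stated combination of $K_R$, $M_R$, $\|L\|$, $\gamma(j)$ survives — getting that algebra to match \eqref{upper-posterior-choice} exactly, rather than merely producing \emph{some} threshold, is the delicate part.
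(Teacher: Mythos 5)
Your proposal follows essentially the same route as the paper: expand $\|\thekjn-\thedag\|^2-\|\thekj-\thedag\|^2$ via the update, insert $LS(\thekj)$ to separate the exact residual from the state-approximation error, apply Young's inequality with parameter $\epsilon$ to the squared gradient step, invoke both parts of the tangential cone condition (the $C_{tc}$-form \eqref{cond-upper-tcc1} on the cross term and the $K_R$-bound on $LS'(\thekj)(\thekj-\thedag)$), absorb all $\epk$-contributions into the residual via \eqref{cond-lower-stopindex}, and read off the threshold $\Gamma(j)$ from positivity of the resulting quadratic. Your final denominator condition matches \eqref{upper-posterior-choice} exactly, so the core of the argument is right.

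One point where your plan diverges from the paper's proof, and would in fact lead you away from the stated constant: you propose to invoke the adjoint error Lemma \ref{lem-adjointerror} \emph{inside} this estimate, both in the inner-product term and in $\|g\|^2$. The paper does not do this here. It treats $S'(\thekj)^*$ (the approximate adjoint of Remark \ref{rem:algorithm}) formally as a single bounded operator with $\|S'(\thekj)^*L^*\|\leq M_R$, moves it across the inner product to produce $LS'(\thekj)(\thekj-\thedag)$, and reserves Lemma \ref{lem-adjointerror} entirely for the a priori analysis (Proposition \ref{prop-upper-noisepropagate}). If you carried the adjoint discrepancy through here, extra terms with the constant $C_{\nabla f,A}$ would appear in the threshold, and the resulting bound would not be the $\Gamma(j)$ of \eqref{upper-posterior-choice}, which contains only $C_{tc}$, $K_R$, $M_R$, $\|L\|$, $\gamma(j)$ and $\epsilon$. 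So to reproduce the stated result you should drop that refinement and use only the output-error Lemma \ref{lem-outputerror} (i.e.\ $\|L\Stil(\thekj)-LS(\thekj)\|\leq\|L\|\epk$) together with the uniform bound $M_R$; your instinct about consistency of the approximate adjoint is a fair criticism of the paper's bookkeeping, but it is not the computation that yields this proposition.
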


\begin{proof}
Starting from \eqref{al-upper}, by inserting $LS(\thekjn)$ and applying Young's inequality $(a+b)^2\leq (1+\epsilon) a^2+(1+1/\epsilon)b^2$, $\epsilon>0$, one estimates the error discrepancy between two consecutive iterations as
\begin{align}\label{auxi-esstimate}
&\etildk:=\|\thekjn-\thedag\|^2-\|\thekj-\thedag\|^2\nonumber\\
&=2\( \thekjn-\thekj,\thekj-\thedag \)+\|\thekjn-\thekj\|^2 \nonumber\\
&=-2\( L\Stil(\thekj)-y^\delta,LS'(\thekj)(\thekj-\thedag) \) \nonumber\\
& \qquad +\|S'(\thekj)^*L^*(L\Stil(\thekj)-y^\delta)\|^2 \nonumber\\
&\leq-2\( LS(\thekj)-y^\delta,LS'(\thekj)(\thekj-\thedag) \)-2\( L\Stil(\thekj)-LS(\thekj),LS'(\thekj)(\thekj-\thedag) \) \nonumber\\
&+(1+\epsilon)\|S'(\thekj)^*L^*(LS(\thekj)-y^\delta)\|^2+\(1+\frac{1}{\epsilon}\)\|S'(\thekj)^*L^*(L\Stil(\thekj)-LS(\thekj))\|^2 \nonumber\\
&=: E_1+E_2+E_3+E_4, 
\end{align}
recalling that $S(\theta)$ is the exact evaluation, while $\Stil(\theta)$ is the approximation via the lower-level iteration \eqref{al:approx-S}.

For $E_1$, we insert $y^\delta-LS(\thekj)$. %and the exact data $y$, then apply the strong tangential cone condition \eqref{cond-upper-tcc}. 
For $E_2$, we recall the state approximation error $\epk:=\|S(\thekj)-\Stil(\thekj)\|$, and use the second part of the tangential cone condition \eqref{cond-upper-tcc}. For $E_3$ and $E_4$, we employ boundedness of the derivative from \eqref{cond-upper-boundedG'}, as well as the definition of $\epk$ for $E_4$. Explicitly, one has 
\begin{align*}
E_1&= -2\|LS(\thekj)-y^\delta\|^2-2\(LS(\thekj)-y^\delta, y^\delta-LS(\thekj)+LS'(\thekj)(\thekj-\thedag\),\\[0.5ex]
E_2&\leq 2\|L\|\epk K_R\|LS(\thekj)-LS(\thedag)\|=2\|L\|\epk K_R\|LS(\thekj)-y\|,\\[0.5ex]
E_3&\leq (1+\epsilon)M_R^2\|LS(\thekj)-y^\delta\|^2,\\[0.5ex]
E_4&\leq \(1+\frac{1}{\epsilon}\)M_R^2\|L\|^2 \epk^2.
\end{align*}
Combining these estimates then inserting $y$ and applying tangential cone condition in the equivalent form \eqref{cond-upper-tcc1}, we get
\begin{align*}
&\etildk\leq \big(E_1+E_3\big)+\big(E_2+E_4\big)\\
&\leq -\big(2-(1+\epsilon)M_R^2\big)\|LS(\thekj)-y^\delta\|^2
-2\( LS(\thekj)-y^\delta,y^\delta-LS(\thekj)+LS'(\thekj)(\thekj-\thedag) \)\\
&\hspace{1cm}+\|L\|\epsilon_{K(j)}\Big[ 2K_R\|LS(\thekj)-y\|+\(1+\frac{1}{\epsilon}\)M_R^2\|L\| \epk\Big]\\
&\leq -\big(2-(1+\epsilon)M_R^2\big)\|LS(\thekj)-y^\delta\|^2+ 2\|LS(\thekj)-y^\delta\|\( \delta+C_{tc}(\delta+\|LS(\thekj)-y^\delta\|) \)\\
& \hspace{1cm} +\|L\|\epk\Big[ 2K_R(\delta+\|LS(\thekj)-y^\delta\|) +\(1+\frac{1}{\epsilon}\)M_R^2\|L\| \epk) \Big]\\
&=\|LS(\thekj)-y^\delta\| \Big[ -\Big(2-(1+\epsilon)M_R^2-2C_{tc}\Big)\|LS(\thekj)-y^\delta\| + 2\delta(1+C_{tc})\Big]\\
& \hspace{1cm} +\|L\|\epk\Big[ 2K_R\delta+2K_R\|LS(\thekj)-y^\delta\|) +\(1+\frac{1}{\epsilon}\)M_R^2\|L\| \epk) \Big].
\end{align*}
This estimate reveals that if the state approximation error $\epk$ is sufficiently small in comparison to $\|LS(\thekj)-y^\delta\|$, we obtain negativity of $\etildk$. This motivates us to control the lower-level error $\epk$ by the upper-level residual $\gamma(j)\|LS(\thekj)-y^\delta\|$ as in \eqref{cond-lower-stopindex}. Indeed, \eqref{cond-lower-stopindex} leads us to
\begin{align*}
\etildk\leq &\|LS(\thekj)-y^\delta\|\,\cdot\\& \Bigg[ -\Big(2-(1+\epsilon)M_R^2-2C_{tc}- \|L\|\( 2K_R{\gamma(j)}+M_R^2\|L\|{\gamma(j)^2} \)\big(1+\frac{1}{\epsilon}\big) \Big)\|LS(\thekj)-y^\delta\|\\[1ex]
&\hspace{5cm}+2\delta\big(1+C_{tc}+\|L\|K_R\gamma(j)\big)  \Bigg]\quad <0,
\end{align*}
where negativity is achieved if we stop the upper-level iteration according to the rule \eqref{upper-posterior-choice}. Fej\'er monotonicity follows.
\end{proof}

In view of \eqref{upper-posterior-choice}, we suggest the posterior upper-level stopping rule
\begin{equation}\label{upper-stopindex}
\boxed{\|LS(\widetilde{\theta}^{\delta,\jmax})-y^\delta\|\leq\tau \delta< \|LS(\widetilde{\theta}^{\delta,j})-y^\delta\|, \quad \tau > \Gamma(j), \quad 0\leq j<\jmax %\\[1ex]
%\tau>2\dfrac{1+C_{tc}+\|L\|K_R{\gamma(j)}}{2-(1+\epsilon)M_R^2-2C_{tc}- \|L\|\( 2K_R{\gamma(j)}+M_R^2{\gamma(j)^2} \)\(1+\frac{1}{\epsilon}\)}.
}
\end{equation}
with $\Gamma(j)$ as in \eqref{upper-posterior-choice}. By Fej\'er monotonicity, $\thekjn$ is a better approximation of $\theta^\dagger$ than $\thekj$, as long as the discrepancy \eqref{upper-stopindex} is respected.

\begin{remark}[Stopping rule -- consistency check]
We wish to emphasize that when $S$ is solved exactly or an analytic solution of the PDE \eqref{original-pde} can be derived, then in \eqref{upper-posterior-choice} one can set $K(j)=\infty$, $\gamma(j)= \epsilon=0$ and bypass the lower-level problem. In this event, \eqref{upper-posterior-choice} reduces to  
\[
\|LS(\thekj)-y^\delta\|>2\dfrac{1+C_{tc}}{1-2C_{tc}}\,\delta \qquad \text{for }M_R=1, C_{tc}<\frac{1}{2},
\]
coinciding with the known discrepancy principle in \cite[Section 2.2, (2.12)]{KalNeuSch08}.
\end{remark}

%\red{This is just about Fejer monotonicity. To obtain CONVERGENCE result like [iterative book, Theo 2.4], then in the proof by inserting $LS$ we will have a new term $\sum\|L\Stil-LS\|$, and this leads to a stronger choice of $\epk$ for the series to converge. Then be carefull with evaluate noise-approximation error propagation. Maybe will result in something like prior choice}

\subsection{A priori stopping rule}

At this point, it is natural to ask how one may obtain the exact evaluation $S(\thekj)$ required to compare the residual $\|LS(\thekj)-y^\delta\|$ with $\tau\delta$ as the discrepancy principle \eqref{upper-stopindex} suggests, and how terminate the lower-level iteration w.r.t the upper-level residual \eqref{cond-lower-stopindex}. A possible scenario is that one may have access to the system output $LS:\thekj\mapsto LS(\thekj)$, despite $S(\thekj)$ not being directly accessible. In this event, the proposed posterior stopping rule \eqref{upper-stopindex} can be verified. If this is not the case, an a priori stopping rule $\jmax$ via the noise level $\delta$ is desirable. %In comparison to the classical result \cite{KalNeuSch08}, a prior stopping rule is a new development for Landweber regularization method. Inspired  by \cite{Kindermann17}, we present in this secion a priori stopping rule for the bi-level problem.

\begin{proposition}[Uniform boundedness]\label{propprior-upper-uniformbound}
Let Assumptions \ref{summary-ass-bi}, \ref{summary-ass-low}, \ref{summary-ass-up} hold. All the iterates $\thekj$ with $0\leq j\leq\jmax(\delta)$ remain in the ball $\Bthe$ if, for some $\gamma(j)\geq 0$, the two following conditions are satisfied:

\begin{enumerate}

\item The lower-level problem \eqref{al:approx-S} is iterated until step $K(j)$, such that the state approximation error satisfies
\begin{align}\label{cond-prior-lower-stopindex}
\epk= \mathcal{O}\(\frac{1}{K(j)^{1/(2\alpha)}}\)\leq \delta\gamma(j)
\end{align}

\item In the upper-level, the initialization and the prior stopping index satisfy
\begin{equation}\label{upper-prior-choice}
\begin{split}
&\|\theta^0-\theta^\dagger\|\leq R_0<R,\\[1ex]
&\jmax(\delta)\delta^2\(\frac{K_R^2}{\mu_R}+5M_R^2+ 2\frac{K_R^2}{\mu_R}\|L\|^2\gamma(j)^2 +\frac{7}{2}M_R^2\|L\|^2\gamma(j)^2\)\leq R^2-R_0^2.
\end{split}
\end{equation}

\end{enumerate}

\end{proposition}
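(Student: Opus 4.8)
The plan is to mimic the Fej\'er-type estimate from Proposition \ref{prop-upper-Fejer}, but now \emph{sum} the per-step decrements instead of merely sign-checking one of them. Starting again from \eqref{al-upper}, inserting $LS(\thekjn)$ and applying Young's inequality exactly as in \eqref{auxi-esstimate}, I would arrive at the same decomposition $\etildk\le E_1+E_2+E_3+E_4$. The key structural difference in the a priori setting is that we no longer have a lower bound $\|LS(\thekj)-\ydel\|>\delta\,\Gamma(j)$ at our disposal; instead, the lower-level control is phrased purely in terms of $\delta$ via \eqref{cond-prior-lower-stopindex}, $\epk\le\delta\gamma(j)$. So after bounding $E_1,\dots,E_4$ I would split $\|LS(\thekj)-\ydel\|$ against $\delta$ using Young's inequality wherever a cross term $\delta\cdot\|LS(\thekj)-\ydel\|$ or $\epk\cdot\|LS(\thekj)-\ydel\|$ appears, absorbing the $\|LS(\thekj)-\ydel\|^2$ part into the (negative) leading term coming from $-(2-(1+\epsilon)M_R^2-2C_{tc})\|LS(\thekj)-\ydel\|^2$ and the residual $\|G(\thekj)-\ydel\|^2$ produced by the tangential cone condition, while the leftover is a pure multiple of $\delta^2$.

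Concretely, the second tangential cone inequality in \eqref{cond-upper-tcc} gives $\|G'(\thekj)(\thekj-\thedag)\|\le K_R\|G(\thekj)-G(\thedag)\|\le K_R(\delta+\|LS(\thekj)-\ydel\|)$, and the weak part gives a term $-\mu_R\|G(\thekj)-y\|^2$-type gain if one keeps it; alternatively one uses \eqref{cond-upper-tcc1}. After the dust settles I expect an estimate of the shape
\begin{align*}
\etildk\le -c_0\,\|LS(\thekj)-\ydel\|^2+c_1\,\delta^2,
\end{align*}
with $c_0>0$ guaranteed by $(M_R^2+\mu_R)<2$ together with $C_{tc}<1/2$ and $\gamma(j)$ small, and $c_1=\frac{K_R^2}{\mu_R}+5M_R^2+2\frac{K_R^2}{\mu_R}\|L\|^2\gamma(j)^2+\frac72 M_R^2\|L\|^2\gamma(j)^2$ exactly as written in \eqref{upper-prior-choice} (the precise constants are pinned down by the choice of $\epsilon$ and the Young splittings, which I would carry out but not reproduce here). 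In particular $\etildk\le c_1\delta^2$ for every $j$. Telescoping from $0$ to $j-1$ with $j\le\jmax(\delta)$ then yields
\begin{align*}
\|\thekj-\thedag\|^2\le\|\theta^0-\thedag\|^2+\jmax(\delta)\,c_1\,\delta^2\le R_0^2+(R^2-R_0^2)=R^2,
\end{align*}
using the initialization bound $\|\theta^0-\thedag\|\le R_0$ and the defining inequality for $\jmax(\delta)$ in \eqref{upper-prior-choice}. An induction on $j$ makes this rigorous: assuming $\thekl\in\Bthe$ for all $l\le j$, all the assumptions (boundedness of $S'$, the tangential cone conditions, and the lower-level convergence rate from Theorem \ref{low-conv-rate}, which requires $\thekj\in\Bthe$ to keep the lower-level iterates in $\Bu$) are available to run the estimate at step $j$, hence $\thekjn\in\Bthe$ as well.

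The main obstacle I anticipate is bookkeeping rather than conceptual: choosing $\epsilon$ and the several Young-inequality weights so that the coefficient $c_0$ of $\|LS(\thekj)-\ydel\|^2$ stays nonnegative while the $\delta^2$-coefficient collapses to exactly the constant displayed in \eqref{upper-prior-choice}. One must be careful that the term $E_2$ (which in Proposition \ref{prop-upper-Fejer} was handled using $\|LS(\thekj)-y\|=\|LS(\thekj)-LS(\thedag)\|$ and the forward tangential cone bound) now also needs splitting against $\delta$, and that the $\gamma(j)^2$ contributions from $E_2$ and $E_4$ land with the stated coefficients $2K_R^2/\mu_R$ and $\tfrac72 M_R^2$ after the $\|L\|^2$ factors are accounted for. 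Another subtlety is that $\gamma(j)$ may depend on $j$ through $\|LS(\thekj)-\ydel\|$ was the posterior story; here \eqref{cond-prior-lower-stopindex} ties $\epk$ directly to $\delta$, so $\gamma(j)$ should be read as a fixed (or $j$-uniformly bounded) small constant, and one should state that the inequality in \eqref{upper-prior-choice} is meant with $\gamma:=\sup_j\gamma(j)$, so that the single scalar bound on $\jmax(\delta)$ is legitimate.
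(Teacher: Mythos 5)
Your proposal is correct and follows essentially the same route as the paper: a per-step estimate of the form $\etildk\le -c_0(\text{residual})^2+c_1\delta^2$ obtained from the decomposition \eqref{auxi-esstimate} by Young splittings (the paper inserts $y$ into $E_1$, $E_3$ first, so its negative term is $-\tfrac{\mu_R}{2}\|LS(\thekj)-y\|^2$ in the \emph{exact} residual, with the specific choice $\epsilon=\epsilon'=\sqrt2-1$ producing the constants $5M_R^2$ and $\tfrac72 M_R^2$), followed by telescoping against the a priori bound on $\jmax(\delta)\delta^2$. Your remarks on the implicit induction and on reading $\gamma(j)$ as a $j$-uniform bound in \eqref{upper-prior-choice} are apt.
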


\begin{proof}
Similarly to Proposition \ref{prop-upper-Fejer}, we will evaluate the error $\etildk$ between two consecutive iterations. We begin with the estimate \eqref{auxi-esstimate}, where in the last line, we further insert $y$ into $E_1$ and $E_3$ (that is, the terms containing $y^\delta$). So
\begin{align*}
&\etildk=\|\thekjn-\thedag\|^2-\|\thekj-\thedag\|^2\leq A+B.
\end{align*}
where $A$ has the same form as as $(E_1+E_2+E_3+E_4)$ in \eqref{auxi-esstimate}, with $y$ replacing $y^\delta$ in $E_1$, $E_3$, and $B$ is the residual generated by inserting $y$.

In $A$, we can directly apply the first part of the tangential cone condition \eqref{cond-upper-tcc} to $E_1$; the rest is estimated in a similar manner as in \eqref{auxi-esstimate}. That is,
\begin{align*}
A&=-2\( LS(\thekj)-y,LS'(\thekj)(\thekj-\thedag) \)-2\( L\Stil(\thekj)-LS(\thekj),LS'(\thekj)(\thekj-\thedag) \) \nonumber\\
&+(1+\epsilon)\|S'(\thekj)^*L^*(LS(\thekj)-y)\|^2+\(1+\frac{1}{\epsilon}\)\|S'(\thekj)^*L^*(L\Stil(\thekj)-LS(\thekj))\|^2\\
&\leq-2(M_R^2+\mu_R)\|LS(\thekj)-y\|^2+2\|L\|\epk K_R\|LS(\thekj)-y\|\\
&+(1+\epsilon)(1+\epsilon')M_R^2\|LS(\thekj)-y\|^2+\(1+\frac{1}{\epsilon}\) M_R^2 \|L\|^2\epk^2\\
&=: A_1+A_2+A_3+A_4.
\end{align*}
Noticing that in comparison to $E_3$, $A_3$ has the extra constant $(1+\epsilon')$ generated from splitting the square term $(a+b)^2\leq (1+\epsilon')a^2+(1+\frac{1}{\epsilon'})b^2$, $\epsilon'>0$ when inserting $y$.

In $B$, we use the second part of the tangential cone condition \eqref{cond-upper-tcc}, boundedness of the derivative as in \eqref{cond-upper-boundedG'}, then finally Young's inequality $2ab\leq \mu_R a^2+b^2/\mu_R$ for $a:=\|LS(\thekj)-y\|$, $b:=\delta K_R$. This leads to
\begin{align*}
B&= -2\( y-y^\delta,LS'(\thekj)(\thekj-\thedag) \)+ (1+\epsilon)(1+\frac{1}{\epsilon'})\|S'(\thekj)^*L^*(y-y^\delta)\|^2\\
&\leq 2\delta K_R\|LS(\thekj)-y\|+(1+\epsilon)(1+\frac{1}{\epsilon'})\delta^2 M_R^2\\
&\leq \mu_R\|LS(\thekj)-y\|^2+\delta^2\(\frac{K_R^2}{\mu_R}+(1+\epsilon)(1+\frac{1}{\epsilon'})M_R^2\)\\
&=:B_1+B_2
\end{align*}
We now set $\epsilon=\epsilon'=\sqrt{2}-1$ in both $A$ and $B$. In $A_3$, we thus have the constant $(1+\epsilon)(1+\epsilon')=2$; in $A_4$, we have $(1+\frac{1}{\epsilon})=\sqrt{2}/(\sqrt{2}-1)<7/2$, while in $B_2$, we have $(1+\epsilon)(1+\frac{1}{\epsilon'})<5$.

Next, we bound $\epk$ in $A_2$, $A_4$ by $\delta \gamma(j)$ as the rule \eqref{cond-prior-lower-stopindex} suggests. In $A_2$, we further apply Young's inequality $2ab\leq \mu_Ra^2/2 +2b^2/\mu_R$ for $a:=\|LS(\thekj)-y\|$, $b:=\|L\|\epk K_R$. 

Now summing $A$ and $B$, we see that $(A_3+B_1+\mu_Ra^2/2)$ is absorbed by part of $A_1$, that is
\begin{align}\label{upper-monontone}
\etildk&\leq A +B \leq \Big(A_1+A_3+B_1+\mu_Ra^2/2\Big) + \Big(B_2+2b^2/\mu_R+A_4\Big)\nonumber\\
&\leq -\frac{\mu_R}{2}\|LS(\thekj)-y\|^2+\delta^2\(\frac{K_R^2}{\mu_R}+5M_R^2+ 2\frac{K_R^2}{\mu_R}\|L\|^2\gamma(j)^2 +\frac{7}{2}M_R^2\|L\|^2\gamma(j)^2\).
\end{align}

Next, summing up $\etildk$ for $j=0\ldots\bar{j}$ for any $\bar{j}\leq\jmax(\delta)$ with the upper-stopping index $\jmax$ proposed in \eqref{upper-prior-choice}, we arrive at
\begin{align}\label{upper-suming}
\frac{\mu_R}{2}&\sum_{j=0}^{\bar{j}}\|LS(\thekj)-y\|^2+\|\thekjbar-\theta^\dagger\|^2 \nonumber\\
&\leq \|\theta^0-\theta^\dagger\|^2+\jmax(\delta)\delta^2\(\frac{K_R^2}{\mu_R}+5M_R^2+ 2\frac{K_R^2}{\mu_R}\|L\|^2\gamma(j)^2 +\frac{7}{2}M_R^2\|L\|^2\gamma(j)^2\)\nonumber\\
&\leq R^2.
\end{align}
This shows that all the iterates until $\jmax(\delta)$ remain in the ball
\[\thekj\in \Bthe \quad \forall j\leq\jmax(\delta),\]
and the proof is complete.
\end{proof}

In comparison to Proposition \ref{prop-upper-Fejer}, a prior stopping rule cannot yield Fej\'er monotonicity of the iterate with noisy data. As monotonicity is essential in ensuring the regularizing property of the Landweber iteration, and also due to the bi-level factor, we need to take particular care regarding stability of the upper-level. 

The following Proposition is crucial to derive a more concrete stopping rule, % for the lower-level problem, 
which can ensure controllability of the approximation errors propagating to the upper-level. At this point, the preliminary error analysis in Section \ref{sec:pre-erroranalysis} comes into play. We remind ourselves that the circumflex $\widetilde{(\cdot)}$ denotes the approximation/bi-level scheme; notation without the circumflex refers to the exact/single-level scheme.

\begin{proposition}[Noise propagation-approximation error]\label{prop-upper-noisepropagate}
Let the assumption in Proposition \ref{propprior-upper-uniformbound} hold. Assume moreover that the lower-level precision \eqref{cond-prior-lower-stopindex} satisfies
\begin{align}\label{condupper-lowerstop}
\epk\leq\delta\gamma(j)\leq \frac{\delta}{q^j} \qquad \text{for some }q\geq1.
\end{align}
Then, we achieve the estimate
\begin{align}\label{upper-estimate}
&\|\thekj-\theta^j\| \leq  \delta\,\hat\Gamma(j), \\
& \hat{\Gamma}(j):=M_R\frac{\Big[1+M_R^2+(1+M_S)\mathbb{D} \Big]^j-1}{M_R^2+(1+M_S)\mathbb{D}}
+\frac{\(M_R\|L\|+ \mathbb{D}\)}{q} \frac{ \Big[1+M_R^2+(1+M_S)\mathbb{D} \Big]^j-(1/q)^j}{\Big[1+M_R^2+(1+M_S)\mathbb{D} \Big]-(1/q) } \nonumber
\end{align}
for all $j\leq\jmax(\delta)$, where 
\begin{align*}
\delta\leq\bar{\delta}, \qquad\gamma(j)\leq \bar{\gamma},\qquad
\mathbb{D}:=C_{\nabla f,A}\|L\|(\bar{\delta}+M_R R+\|L\|\bar{\delta}\bar{\gamma}),\qquad C_{\nabla f,A} \text{ as in } \eqref{error-adjoint}.
\end{align*}
\end{proposition}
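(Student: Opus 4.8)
The plan is to track the discrepancy $\|\thekj-\theta^j\|$ between the bi-level iterate and the single-level (exact) iterate by subtracting the two update formulas and estimating term by term, then to close the recursion with a discrete Gr\"onwall-type argument. First I would write the two updates side by side: the exact one $\theta^{j+1}=\theta^j - S'(\theta^j)^*L^*(LS(\theta^j)-\ydel)$ and the approximate one $\thekjn = \thekj -S'(\thekj)^*L^*(L\Stil(\thekj)-\ydel)$, subtract, and insert mixed terms so that the difference splits into (a) a term measuring the change in the adjoint operator $S'(\theta^j)^*-S'(\thekj)^*$ applied to the exact residual, (b) a term $S'(\thekj)^*L^*$ applied to the \emph{output} discrepancy $LS(\theta^j)-L\Stil(\thekj)$, and possibly (c) a term $(S'(\thekj)^*-S'(\theta^j)^*)L^*$ times a bounded residual. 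The two preliminary lemmas do exactly the bookkeeping needed here: Lemma~\ref{lem-outputerror} bounds the output error by $M_S\|L\|\|\theta^j-\thekj\| + \|L\|\epk$, and Lemma~\ref{lem-adjointerror} bounds the adjoint error by $C_{\nabla f,A}\big((1+M_S)\|\theta^j-\thekj\|+\epk\big)$.

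Next I would bound the magnitude of the residuals that multiply these error operators. Since both $\theta^j$ and $\thekj$ stay in $\Bthe$ (by Proposition~\ref{propprior-upper-uniformbound} for the bi-level iterate, and by the classical single-level theory for the exact one), and since $G$ has bounded derivative $M_R=M_S\|L\|$, the residual $\|LS(\thekj)-\ydel\|$ is controlled by $M_R R+\delta$ up to the output error, and similarly $\|LS(\theta^j)-\ydel\|\le M_R R+\delta$. This is where the constant $\mathbb{D}$ is manufactured: collecting $C_{\nabla f,A}$, $\|L\|$, and the residual bound $\bar\delta + M_R R + \|L\|\bar\delta\bar\gamma$ (the last accounting for $\epk\le\delta\gamma(j)\le\bar\delta\bar\gamma$ feeding back into the residual) gives $\mathbb{D}=C_{\nabla f,A}\|L\|(\bar\delta+M_R R+\|L\|\bar\delta\bar\gamma)$. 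Assembling everything, one obtains a recursion of the shape
\begin{align*}
\|\thekjn-\theta^{j+1}\| \leq \big(1+M_R^2+(1+M_S)\mathbb{D}\big)\|\thekj-\theta^j\| + \big(M_R\|L\|+\mathbb{D}\big)\epk + M_R\delta\cdot(\text{something}),
\end{align*}
where the $\epk$ term carries coefficient $M_R\|L\|+\mathbb{D}$ and, using $\epk\le\delta/q^j$ from \eqref{condupper-lowerstop}, becomes a geometric-in-$j$ inhomogeneity. The leftover $M_R\delta$ piece comes from the fact that at the very first adjoint-error estimate the residual itself is only bounded, not small.

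Finally I would solve this linear recursion explicitly. Writing $a:=1+M_R^2+(1+M_S)\mathbb{D}$ and separating the two inhomogeneous sources — a constant-in-$j$ term proportional to $\delta$ and a term proportional to $\delta/q^j$ — the standard formula $\sum_{i=0}^{j-1}a^{j-1-i}$ gives the first fraction $(a^j-1)/(a-1)$ with prefactor $M_R$, and $\sum_{i=0}^{j-1}a^{j-1-i}q^{-i}$ gives the second fraction $(a^j-q^{-j})/(a-q^{-1})$ with prefactor $(M_R\|L\|+\mathbb{D})/q$, matching $\hat\Gamma(j)$ exactly (the extra $1/q$ out front absorbing one shift in the index). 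The main obstacle I anticipate is precisely the careful choice of mixed insertions and of which bound ($M_R R+\delta$ versus the small $\delta\gamma(j)$) to apply to each residual factor, so that the coefficients come out to be exactly $a$, $M_R$, and $M_R\|L\|+\mathbb{D}$ and not something messier; getting the constants to land on the stated $\hat\Gamma(j)$ requires disciplined use of Young's inequality with the already-fixed choice $\epsilon=\sqrt2-1$ and of the two preliminary error lemmas, rather than any deep new idea. A secondary subtlety is justifying that $\theta^j$ (the exact iterate) also remains in $\Bthe$ so that all the locally-uniform bounds apply along both trajectories simultaneously; this should follow from the single-level Fej\'er monotonicity under the same stopping index, but it must be stated.
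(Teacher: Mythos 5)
Your overall strategy coincides with the paper's: subtract the two update formulas, insert the mixed term $S'(\theta^j)^*L^*\big(L\Stil(\thekj)-y^\delta\big)$, bound the output discrepancy via Lemma \ref{lem-outputerror} and the adjoint discrepancy via Lemma \ref{lem-adjointerror}, absorb the residual bound $\|L\|(\delta+M_RR+\|L\|\epsilon_{K(j)})$ into the constant $\mathbb{D}$ using the uniform boundedness from Proposition \ref{propprior-upper-uniformbound}, and solve the resulting linear recursion $T^{j+1}\le aT^j+\delta M_R+\delta\gamma(j)(M_R\|L\|+\mathbb{D})$ by the geometric-sum formula, collapsing the inhomogeneity with $\gamma(j)\le q^{-j}$. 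The recursion coefficients and the two fractions in $\hat\Gamma(j)$ come out exactly as you describe. (Your flag that $\theta^j$ must also stay in $\Bthe$ is well taken; the paper secures this via noise-free Fej\'er monotonicity, invoked in the proof of Theorem \ref{theo:bi-priorstop}. The Young-inequality choice $\epsilon=\sqrt2-1$ belongs to Proposition \ref{propprior-upper-uniformbound}, not here.)

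There is one genuine misstep: you take the comparison sequence to be the single-level iteration driven by the \emph{noisy} data, $\theta^{j+1}=\theta^j-S'(\theta^j)^*L^*(LS(\theta^j)-y^\delta)$, whereas the paper compares against the single-level iteration with \emph{exact} data $y$. This is not cosmetic. First, the standalone $\delta M_R$ inhomogeneity in the recursion — which produces the first fraction $M_R\frac{a^j-1}{a-1}$ of $\hat\Gamma(j)$ — arises precisely from the term $\|S'(\theta^j)^*L^*(y-y^\delta)\|\le M_R\delta$ created by the $y$-versus-$y^\delta$ mismatch; it does not come from ``the residual being only bounded at the adjoint-error estimate'' (that bounded residual is what multiplies the \emph{small} adjoint-operator difference and is absorbed into $\mathbb{D}$, feeding the coefficients of $T^j$ and $\epsilon_{K(j)}$, not a free $\delta$ term). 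With your choice of comparison sequence both updates share the same $y^\delta$, so no such term appears and you would not recover the stated $\hat\Gamma(j)$. Second, and more importantly, the proposition is used in Theorem \ref{theo:bi-priorstop} through the splitting $\thekjstar-\theta^\dagger=(\thekjstar-\theta^{\jmax})+(\theta^{\jmax}-\theta^\dagger)$, where the second piece converges only because $\theta^j$ is the \emph{noise-free} iterate; an estimate against the noisy single-level iterate would not close that argument. Replacing $y^\delta$ by $y$ in your exact update and tracking the resulting $M_R\delta$ term fixes the proof.
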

\begin{proof} 
We begin with comparing the $(j+1)$-iteration of the single-level scheme with exact data, and of the bi-level scheme with noisy data
\begin{align*}
&\theta^{j+1} = \theta^j -S'(\theta^j)^*L^*\(LS(\theta^{j}-y\),\\
&\thekjn = \thekj -S'(\thekj)^*L^*\(L\Stil(\thekj)-\ydel\).
\end{align*}
Inserting $S'(\theta^j)^*L^*(L\Stil(\thekj)-\ydel)$ (first estimate below) and using the analysis of the state approximation error derived in Lemma \ref{lem-outputerror} (second and third estimates) and of the adjoint approximation  error derived Lemma \ref{lem-adjointerror} (second estimate), we deduce
\begin{align*}
&\|\theta^{j+1}-\thekjn\|\\&\leq\|\theta^j-\thekj\|\\
&\quad+\|S'(\theta^j)^*L^*\(LS(\theta^j)-L\Stil(\thekj)\)\|+\|S'(\theta^j)^*L^*\|\delta\\
&\quad+\|\(S'(\theta^j)^*-S'(\thekj)^*\)L^*\(L\Stil(\thekj)-\ydel\)\|\\
&\leq\|\theta^j-\thekj\| \\
&\quad+ M_R\(M_R\|\theta^j-\thekj\|+\|L\|\epk\)+M_R\delta\\
& \quad+ C_{\nabla f,A}\((1+M_S)\|\theta^j-\thekj\|+ \epk\)\|L\|\(\delta+\|L\Stil(\thekj)-LS(\theta^\dagger)\|\)\\
&\leq\|\theta^j-\thekj\| \\&\quad+ M_R\(M_R\|\theta^j-\thekj\|+\|L\|\epk\)+M_R\delta\\
& \quad+ C_{\nabla f,A}\((1+M_S)\|\theta^j-\thekj\|+ \epk\)\|L\|\(\delta+M_R\|\thekj-\theta^\dagger\|+\|L\|\epk\).
\end{align*}
Simplify the notation by denoting 
\begin{align*}
&T^j:=\|\theta^j-\thekj\|,\\
&\|L\|\(\delta+M_R\|\thekj-\theta^\dagger\|+\|L\|\epk\)\leq\|L\|(\delta+M_R R+\|L\|\epk)=:B_{\delta,\epk},
\end{align*}
where in $B_{\delta,\epk}$, we have invoked uniform boundedness of the iterates in the ball $\Bthe$ proved in Proposition \ref{propprior-upper-uniformbound}. We now group the estimate in terms of $T^j, \delta$ and $\epk$, and obtain
\begin{align}\label{upper-induction}
&T^{j+1}\\&\quad\leq T^j \Big[1+M_R^2+C_{\nabla f,A}(1+M_S)B_{\delta,\epk} \Big] + \delta M_R  +\epk\Big[ M_R\|L\|+ C_{\nabla f,A}B_{\delta,\epk} \Big].\nonumber
\end{align}

The estimate \eqref{upper-induction} exposes an important fact: if $\epk$ decays sufficiently fast with respect to $\delta$ %, which is clearly achievable if $K(j)$ is large enough, 
then we can bound the term $T^{j+1}$ from above. Indeed, \eqref{cond-prior-lower-stopindex} suggests running the lower-level iterate until $\epk\leq\delta\gamma(j)$. %We denote
%\begin{align*}
%\delta\leq\bar{\delta}, \qquad\qquad\epk\leq\delta\gamma(j)\leq \bar{\delta}\bar{\gamma}
%\end{align*}
Employing the upper bounds $\bar{\delta}$ and $\bar{\gamma}$ from the statement of the proposition, we have 
\[C_{\nabla f,A}B_{\delta,\epk}\leq C_{\nabla f,A}\|L\|(\bar{\delta}+M_R R+\|L\|\bar{\delta}\bar{\gamma})=:\mathbb{D}.\]
This results in the recursive relation
\begin{align}
T^{j+1}\leq T^j \Big[1+M_R^2+(1+M_S)\mathbb{D} \Big] + \delta M_R +  \delta\gamma(j) \Big[ M_R\|L\|+ \mathbb{D}\Big].
\end{align}

By induction and choosing the same initialization $\widetilde{\theta}^{\delta,0}=\theta^{0}$, i.e. $T^0=0$, we arrive at the important estimate
\begin{align}\label{upper-induction1}
&T^j\leq  \delta\Bigg( M_R\frac{\Big[1+M_R^2+(1+M_S)\mathbb{D} \Big]^j-1}{M_R^2+(1+M_S)\mathbb{D}}\\
&\qquad\qquad\qquad+\( M_R\|L\|+ \mathbb{D}\) \sum_{i=1}^j \Big[1+M_R^2+(1+M_S)\mathbb{D} \Big]^i\gamma(j-i)\Bigg) \qquad\text{for all }j\leq\jmax(\delta).\nonumber
\end{align}
By \eqref{condupper-lowerstop}, $\gamma(j)\leq\frac{1}{q^j}$; we can thus collapse the sum in \eqref{upper-induction1}, and arrive at the key estimate
\begin{align*}
T^j=\|\theta^j-\thekj\| \leq  \delta\hat{\Gamma}(j)
\end{align*}
for all $j\leq\jmax(\delta)$, as required.
\end{proof}

This immediately suggests a stopping rule:
\begin{assumption}[prior stopping rule]
The upper-stopping index $\jmax(\delta)$ satisfies
\begin{equation}\label{propprior-upper-converge}
\begin{split}
& \delta \,\hat{\Gamma}\left(\jmax(\delta)\right) \to 0 \text{ as $\delta\to 0$,}
%& \delta\Bigg( M_R\frac{\Big[1+M_R^2+(1+M_S)\mathbb{D} \Big]^{\jmax(\delta)}-1}{M_R^2+(1+M_S)\mathbb{D}}+\frac{\(M_R\|L\|+ \mathbb{D}\)}{q} \frac{ \Big[1+M_R^2+(1+M_S)\mathbb{D} \Big]^{\jmax(\delta)}-(1/q)^{\jmax(\delta)}}{\Big[1+M_R^2+(1+M_S)\mathbb{D} \Big]-(1/q) } \Bigg)\\
%&\qquad \longrightarrow 0 \qquad\qquad\text{as }\delta\to0.
\end{split}
\end{equation}
with $\hat{\Gamma}$ as in \eqref{upper-estimate}.
\end{assumption}

%\begin{remark}($\delta=0$)
%It is clear that for precise data, we set $\jmax=\infty$ thus $\epsilon_{K(\jmax)}=0$, meaning that the PDE is solved exactly. However, in practice this is not mandatory. In fact, one still can approximate the PDE solution by the lower-level iteration up to a certain high precision, e.g.
%\begin{align*}
%\epk=\mathcal{O}\(\frac{1}{K(j)^{1/(2\alpha)}}\)\leq \frac{1}{q^{1+\beta(j)}}.
%\end{align*}
%With this, going back to the estimate \eqref{upper-induction} and setting $\delta=0$, we have
%\begin{align*}
%\|\theta^j-\widetilde{\theta}^j\|&\leq  \frac{\(M_R\|L\|+ \mathbb{D}\)}{q^{1+\beta(j)}} \frac{ \Big[1+M_R^2+(1+M_S)\mathbb{D} \Big]^j-(1/q)^j}{\Big[1+M_R^2+(1+M_S)\mathbb{D} \Big]-(1/q) } \to 0 \quad{as}\quad j\to\infty
%\end{align*}
%if, e.g.
%\begin{align*}
%\beta(j)\geq j\qquad\text{and}\qquad q>\Big[1+M_R^2+(1+M_S)\mathbb{D} \Big]. 
%\end{align*} 
%This means, for noise free data, one can still use the bi-level algorithm and accept some error in the lower-level problem. We keep in mind the rule: the bigger the iteration of the upper-level problem is, the higher precision of the lower-level problem should be reached.
%\end{remark}

With all the auxiliary results prepared, we now state the main  result.

\begin{theorem}[Convergence of bi-level iteration]\label{theo:bi-priorstop}
Assuming that the following both hold:
\begin{enumerate}[label=(\alph*)]
\item The upper-level stopping index $\jmax(\delta)$ is chosen according to \eqref{upper-prior-choice} and \eqref{propprior-upper-converge}.
\item The lower-level stopping index $K(j)$ is chosen according to \eqref{condupper-lowerstop} at each $j\leq\jmax(\delta)$.
\end{enumerate}
Then, the bi-level iterates $\thekjstar$ in Algorithm \ref{algorithm} converge to a solution of the inverse problem \eqref{original-pde}-\eqref{original-measure} as $\delta\to 0$ .
\end{theorem}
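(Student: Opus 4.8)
The plan is to decompose the final error into a \emph{single-level error} and a \emph{bi-level perturbation}, namely
\[
\|\thekjstar - \thedag\| \;\leq\; \|\thekjstar - \theta^{\jmax(\delta)}\| \;+\; \|\theta^{\jmax(\delta)} - \thedag\|,
\]
where $\theta^{j}$ denotes the \emph{exact} single-level Landweber iterate with noise-free data $y$, started at the same $\theta^0$. The first term is precisely $T^{\jmax(\delta)}$ controlled by Proposition~\ref{prop-upper-noisepropagate}: under the lower-level rule \eqref{condupper-lowerstop} and the prior stopping rule \eqref{propprior-upper-converge} we have $\|\thekjstar-\theta^{\jmax(\delta)}\|\leq\delta\,\hat\Gamma(\jmax(\delta))\to 0$ as $\delta\to0$. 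So the first term vanishes by hypothesis (a)–(b). It remains to show the second term, the exact-data single-level iterate evaluated at the (data-dependent) index $\jmax(\delta)$, converges to a solution of $G(\theta)=y$ as $\delta\to0$.

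For the second term I would invoke the classical regularization theory for Landweber iteration under the tangential cone condition \eqref{cond-upper-tcc}, as in \cite{HNS95,KalNeuSch08,scherzer95}. The key facts to assemble are: (i) for exact data, the iterates $\theta^j$ satisfy Fej\'er monotonicity $\|\theta^{j+1}-\thedag\|\leq\|\theta^j-\thedag\|$ — this is the $\delta=0$, $\epk=0$ specialization of the computation in Proposition~\ref{prop-upper-Fejer} — so $\{\theta^j\}$ stays in $\Bthe$ and is bounded; (ii) $\sum_{j}\|G(\theta^j)-y\|^2<\infty$, hence $G(\theta^j)\to y$, obtained by summing \eqref{upper-monontone} with $\delta=0$; (iii) consequently $\theta^j$ converges (strongly, using the sub-subsequence/Fej\'er argument exactly as in the proof of Theorem~\ref{low-conv-rate} part i), transplanted from the state space to $X$ and from $\Ft$ to $G$) to some $\theta^*\in\Bthe$ with $G(\theta^*)=y$, i.e. a solution of the inverse problem. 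Then, since $\jmax(\delta)\to\infty$ as $\delta\to0$ (which must be checked: if $\jmax(\delta)$ stayed bounded along a subsequence, then by \eqref{upper-prior-choice} and $\delta\to0$ one still has convergence, but the generic case is $\jmax\to\infty$), the subsequence $\theta^{\jmax(\delta)}$ is a subsequence of a convergent sequence and hence $\theta^{\jmax(\delta)}\to\theta^*$. Combining with the first term gives $\thekjstar\to\theta^*$, a solution, as $\delta\to0$.

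The step I expect to be the main obstacle is item (iii): upgrading convergence of the \emph{residuals} $G(\theta^j)\to y$ to convergence of the \emph{iterates} $\theta^j$ themselves. In the exact-data Landweber setting this is standard but still delicate — it uses Fej\'er monotonicity to show every subsequence has a convergent sub-subsequence, identifies all weak/strong accumulation points as solutions (here via the second, strengthened part of the tangential cone condition \eqref{cond-upper-tcc} playing the role that (wTC) played in Theorem~\ref{low-conv-rate}), and then a parallelogram-law argument (mirroring the $\|u^*-\bar u\|^2$ computation in the proof of Theorem~\ref{low-conv-rate}\,i) shows all accumulation points coincide. A secondary subtlety is the interplay between the two limits $\delta\to0$ and $j\to\infty$: one must confirm that the prior rule \eqref{propprior-upper-converge} is compatible with $\jmax(\delta)\to\infty$ and that no ``stopping too early'' pathology blocks convergence; here the fact that $\hat\Gamma(j)$ in \eqref{upper-estimate} grows (geometrically) in $j$ forces $\jmax(\delta)$ to grow only logarithmically in $1/\delta$, which is enough since $\theta^{\jmax(\delta)}$ just needs to be a tail of the convergent exact sequence. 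I would also remark that if $G(\theta)=y$ has a unique solution in $\Bthe$ the limit is unambiguously $\thedag$; otherwise it is the solution selected by the Landweber dynamics from $\theta^0$.
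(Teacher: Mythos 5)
Your proposal is correct and follows essentially the same route as the paper's proof: the same decomposition of $\thekjstar-\thedag$ into the bi-level perturbation (controlled by Proposition~\ref{prop-upper-noisepropagate} together with \eqref{propprior-upper-converge}) and the exact-data single-level error, with the latter handled by specializing \eqref{upper-monontone} to $\delta=0$, $\gamma(j)=0$ to get Fej\'er monotonicity and residual summability, and then transplanting the convergence argument of Theorem~\ref{low-conv-rate}\,i) (with strong convergence via the Cauchy-sequence argument of \cite{KalNeuSch08}). Your explicit flagging of the need for $\jmax(\delta)\to\infty$ and of possible non-uniqueness of the limit are reasonable refinements that the paper leaves implicit, but they do not change the argument.
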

\begin{proof}
We begin by claiming that for exact data $y$, the whole sequence $\theta^j$ run with the exact/single-level scheme converges in the weak sense to a solution of the inverse problem \eqref{original-pde}-\eqref{original-measure}. Indeed, looking at the proof of Proposition \ref{propprior-upper-uniformbound}, in particular the expression \eqref{upper-monontone}, by setting $\delta=0$, $\gamma(j)=0$ we have 
\[e_j^{\delta=0}=\|\theta^{j+1}-\thedag\|^2-\|\theta^j-\thedag\|^2<0 \qquad j\in\N\]
showing Fej\'er monotonicity. \eqref{upper-suming} with $\bar{j}=\infty$, $G=L\circ S$ becomes
\begin{align*}
\frac{\mu_R}{2}&\sum_{j=0}^\infty\|G(\theta^j)-y\|^2\leq R^2.
\end{align*}
These two properties are exactly the same as \eqref{low-Fejer} and \eqref{low-sum-F} in Theorem \ref{low-conv-rate}, the key steps to prove convergence of the iteration, with $\theta$ in place of $u$, and $G$ in place of $\Ft$. Therefore, following the line of the proof of Theorem \ref{low-conv-rate}, part i), noting that the tangential cone condition (even the strong version) is also assumed here, we can assert that the whole iterate sequence of the single-level algorithm with noise-free data converges to a solution to the inverse problem problem, i.e.
\begin{align*}
\theta^j\overset{j\to\infty}{\rightharpoonup} \theta^\dagger \qquad\text{with}\qquad G(\theta^\dagger)=y.
\end{align*}
Moreover, one can claim strong convergence. This is done in \cite[Theorem 2.4]{KalNeuSch08} (which we do not repeat here) by showing that $e_j^{\delta=0}$ in fact forms a Cauchy sequence; thus,
\begin{align}\label{noisefree-conv}
\theta^j\overset{j\to\infty}{\to} \theta^\dagger \qquad\text{with}\qquad G(\theta^\dagger)=y.
\end{align}

This, together with Proposition \ref{prop-upper-noisepropagate} and the stopping rule \eqref{propprior-upper-converge}, allows us to conclude convergence of the sequence $\thekjsub$ via the decomposition
\begin{align*}
\thekjsub-\theta^\dagger = \big(\thekjsub - \theta^{\jmax(\delta_n)}\big) + \big(\theta^{\jmax(\delta_n)}-\theta^\dagger\big) \, \overset{n\to\infty}{\to} \, 0, 
\end{align*}
where convergence of the first term follows from Proposition \ref{prop-upper-noisepropagate} with the rule \eqref{propprior-upper-converge}, and convergence of the second term is  \eqref{noisefree-conv}. The proof ends here.

\end{proof}

%\red{Corollary for source problem}
%%%%%%%%%%%%%%%%%%%%%%%%%%%%%%%%%%%%%%%%%%
\section{Discussion on assumptions and application}\label{sec:discus}
\subsection{Classes of problems with coercivity}\label{sec:verify-coercive}
As presented in  Remark \ref{rem-coerivity} of Section \ref{sec:lowiter}, coercivity \eqref{low-rate} is the essential property that we extract from well-posedness of $S$ to achieve the convergence rate for the lower-level iterations. %This property ties to unique existence for nonlinear PDEs and plays a crucial role there. 
In what follows, we present two classes of PDEs where coercivity can be established: Lipschitz nonlineartiy and monotone-type nonlinearity \eqref{app-3}.% in the suitable function space setting (see Section \ref{sec:setting}).

Let us consider the PDE \eqref{low-IP}, where the unknowns  $\theta:=(a,c,\phi,u_0)\in X$ consist of the diffusion coefficient $a$, potential $c$, source term $\phi$ and initial state $u_0$. These unknown parameters are embedded in the PDE model via
\begin{align}\label{app-1}
\Ft(u):\Vc\mapsto\Uc^*\times H\qquad \Ft(u)=
\begin{pmatrix}
\dot{u}-\nabla\cdot(a\nabla u)+ c u + \Phi(u)- \phi\\
u_0-u|_{t=0}
\end{pmatrix}
\end{align}
in the Hilbert space framework (c.f. Section \ref{sec:setting})
\begin{equation}\label{app-2}
\begin{split}
&U=H^1_0(\Omega), \quad\qquad H=L^2(\Omega), \qquad U^*=H^{-1}(\Omega),\nonumber\\[0.5ex]
&\Vc = L^2(0,T;U)\cap H^1(0,T;U^*), \qquad\Uc^* = L^2(0,T;H^{-1}(\Omega),\\[0.5ex]
&X=(X_a,X_c,X_\phi,X_0),\\
&X_a=\{a\in H^2(\Omega):a\geq\underline{a}>0\},\qquad  X_c=X_0=L^2(\Omega),\qquad X_\phi=H^{-1}(\Omega)
\end{split}
\end{equation}
with bounded Lipschitz domain $\Omega\subset\R^3$.

In \eqref{app-1}, the nonlinearity $\Phi:\Vc\to\Uc^*$ is a Nemytskii operator induced by $\Phi:U\to U^*$ with (by abuse of notation) $[\Phi(u)](t):=\Phi(u(t))$. We consider
\begin{align}\label{app-3}
\Phi \begin{cases}
 \text{either (a) Lipschitz continuous:} &\|\Phi(u)-\Phi(v)\|_{U^*}\leq L_\Phi\|u-v\|_H,\\[1ex]
\text{or (b) of \emph{monotone-type}:} &\langle \Phi(u)-\Phi(v),u-v\rangle_{U^*,U} +M_\Phi\|u-v\|_H^2\geq 0
\end{cases}
\end{align}
for all $u$, $v\in U$ with $L_\Phi$, $M_\Phi\geq0$. Clearly, case (b) covers the class of standard monotonic functions.

\begin{proposition}[Coercivity verification]\label{prop-PDEcoercive}
The classes of PDEs with Lipschitz or monotone-type nonlinearities \eqref{app-1}-\eqref{app-3} satisfy the coercivity property listed in Assumption \ref{summary-ass-low} with $\alpha=1$ and in $\Uc$-norm. That is, the relation
\begin{align}\label{lipschitz-stability}
C_{coe}\|\Ft(u)-\Ft(v)\|_{\Uc^*\times H}\geq \|u-v\|_{\Uc} \qquad \forall u,\,v\in\Vc
\end{align}
holds uniformly for all $\theta:=(a,c,\phi,u_0)\in\Bthe$ with a positive constant $C_{coe}$. 
\end{proposition}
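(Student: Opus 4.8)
The plan is a standard parabolic energy estimate for the difference $w:=u-v\in\Vc$, closed off by Gr\"onwall's lemma. Since $\phi$ and $u_0$ do not depend on the state, they cancel in $\Ft(u)-\Ft(v)$, leaving
\[
\Ft(u)-\Ft(v)=\big(\,\dot w-\nabla\cdot(a\nabla w)+c\,w+\Phi(u)-\Phi(v)\,;\,-w|_{t=0}\,\big)=:(g;-w(0)),
\]
so that $\|\Ft(u)-\Ft(v)\|_{\Uc^*\times H}^2=\|g\|_{\Uc^*}^2+\|w(0)\|_H^2$; here $g\in\Uc^*$ because $a\in H^2(\Omega)\embed L^\infty(\Omega)$ in $\R^3$, $c\in L^2(\Omega)$ yields $c\,w(t)\in H^{-1}(\Omega)$, and $\Phi$ has the mapping property read off from \eqref{app-3}. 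Since $w\in\Vc\embed C([0,T];H)$ the trace $w(0)$ is well defined and $t\mapsto\|w(t)\|_H^2$ is absolutely continuous with $\frac{d}{dt}\|w(t)\|_H^2=2\langle\dot w(t),w(t)\rangle_{U^*,U}$. Testing the identity $g(t)=\dot w(t)-\nabla\cdot(a\nabla w(t))+c\,w(t)+\Phi(u(t))-\Phi(v(t))$ in $U^*$ against $w(t)\in U=H^1_0(\Omega)$ for a.e.\ $t$, and using $a\geq\underline a>0$, I would obtain
\[
\frac12\frac{d}{dt}\|w(t)\|_H^2+\underline a\,\|\nabla w(t)\|_{L^2}^2\;\leq\;\big|(c\,w(t),w(t))_{L^2}\big|+R_\Phi(t)+\|g(t)\|_{U^*}\|w(t)\|_U,
\]
where $R_\Phi(t)=|\langle\Phi(u(t))-\Phi(v(t)),w(t)\rangle|$ in case (a) and, directly from the monotone-type inequality, $R_\Phi(t)\leq M_\Phi\|w(t)\|_H^2$ in case (b).

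The next step is to absorb every gradient contribution on the right into $\underline a\|\nabla w\|_{L^2}^2$. For the potential term, because $c$ lies only in $L^2(\Omega)$ with no smallness, this needs the Gagliardo--Nirenberg inequality $\|w\|_{L^4(\Omega)}\leq C_{GN}\|\nabla w\|_{L^2}^{3/4}\|w\|_{L^2}^{1/4}$ (valid for $w\in H^1_0(\Omega)$, $\Omega\subset\R^3$) together with Young's inequality (exponents $4/3$ and $4$), giving $\big|(c\,w,w)_{L^2}\big|\leq\|c\|_{L^2}\|w\|_{L^4}^2\leq\frac{\underline a}{4}\|\nabla w\|_{L^2}^2+c_1\|w\|_H^2$. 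Using Poincar\'e in the form $\|w\|_U\leq c_P\|\nabla w\|_{L^2}$ and Young once more, in case (a) one gets $R_\Phi\leq L_\Phi\|w\|_H\|w\|_U\leq\frac{\underline a}{4}\|\nabla w\|_{L^2}^2+c_2\|w\|_H^2$, while in case (b) nothing has to be absorbed; similarly $\|g\|_{U^*}\|w\|_U\leq\frac{\underline a}{4}\|\nabla w\|_{L^2}^2+c_3\|g\|_{U^*}^2$. Moving the three $\frac{\underline a}{4}$-pieces to the left leaves
\[
\frac12\frac{d}{dt}\|w(t)\|_H^2+\frac{\underline a}{4}\|\nabla w(t)\|_{L^2}^2\;\leq\;C_1\|w(t)\|_H^2+C_2\|g(t)\|_{U^*}^2 .
\]

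Integrating on $(0,t)$, discarding the nonnegative gradient term and applying Gr\"onwall's lemma gives $\sup_{[0,T]}\|w(t)\|_H^2\leq e^{2C_1T}\big(\|w(0)\|_H^2+2C_2\|g\|_{\Uc^*}^2\big)$; reinserting this in the integrated inequality at $t=T$ bounds $\int_0^T\|\nabla w(t)\|_{L^2}^2\,dt$ by $C_3\big(\|w(0)\|_H^2+\|g\|_{\Uc^*}^2\big)$, and Poincar\'e then yields
\[
\|w\|_\Uc^2=\int_0^T\|w(t)\|_U^2\,dt\leq c_P^2\int_0^T\|\nabla w(t)\|_{L^2}^2\,dt\leq C_{coe}^2\big(\|w(0)\|_H^2+\|g\|_{\Uc^*}^2\big)=C_{coe}^2\,\|\Ft(u)-\Ft(v)\|_{\Uc^*\times H}^2,
\]
which is \eqref{lipschitz-stability}. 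For the uniformity over $\Bthe$, I would note that $\underline a$ is fixed in the definition of $X_a$, that $\theta\in\Bthe$ forces $\|c\|_{L^2}\leq\|c^\dagger\|_{L^2}+R$, and that $\phi,u_0$ never enter $\Ft(u)-\Ft(v)$; hence $C_1,C_2,C_3$ and $C_{coe}$ depend only on $\underline a$, $\|c^\dagger\|_{L^2}+R$, $L_\Phi$ (resp.\ $M_\Phi$), $T$ and the embedding/Poincar\'e constants of $\Omega$.

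I expect the treatment of the potential term $(c\,w,w)_{L^2}$ to be the main obstacle: since $c$ is merely an $L^2$-coefficient without smallness, it cannot be dominated by the ellipticity of $a$ through a crude Sobolev bound, and it is precisely the Gagliardo--Nirenberg interpolation in $\R^3$ --- peeling off a small multiple of $\|\nabla w\|_{L^2}^2$ plus a lower-order $\|w\|_H^2$ --- that makes the absorption work. The remainder is a routine parabolic energy/Gr\"onwall argument, and the monotone-type case (b) is strictly easier, the nonlinear contribution being sign-definite up to the harmless correction $M_\Phi\|w\|_H^2$.
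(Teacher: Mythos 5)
Your proof is correct and follows essentially the same route as the paper: test the difference equation with $w=u-v$, control the $L^2$-potential term by interpolation (your Gagliardo--Nirenberg step is the same estimate the paper obtains via H\"older and the embedding $H^1(\Omega)\embed L^6(\Omega)$ followed by Young with exponents $4/3$ and $4$), absorb all gradient contributions into the ellipticity of $a$, and close with Gr\"onwall, with the initial-trace term $\|w(0)\|_H$ supplied by the second component of $\Ft(u)-\Ft(v)$. The uniformity argument over $\Bthe$ also matches the paper's.
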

The property \eqref{lipschitz-stability} is also referred to as \emph{Lipschitz stability}.
\begin{proof}
We begin by testing $\Ft(u)-\Ft(v)$ with $(u-v; 0)\in\Vc\times H$. Setting $\|u\|_{H_0^1}:=\|\nabla u\|_{L^2}$ due to the Gagliardo–Nirenberg–Sobolev inequality and recalling $a\geq \underline{a}>0$, we write
\begin{align}\label{energy}
&\langle \Ft(u)-\Ft(v),\(u-v; 0\) \rangle_{U^*\times H,U\times H} \nonumber\\
&=\langle \dot{u}-\dot{v}-\nabla\cdot(a \nabla(u-v))+ c (u-v) + \Phi(u)-\Phi(v),u-v\rangle_{U^*,U}
\\
&\geq\frac{1}{2}\frac{d}{dt}\|u-v\|^2_{L^2} + \underline{a}\|u-v\|_{H^1}^2 + \langle c(u-v),u-v\rangle_{U^*,U}+\langle \Phi(u)-\Phi(v),u-v\rangle_{U^*,U}\nonumber
\end{align}
Estimating the $c$-term by using H\"older's inequaltiy with $p=q=2$, then by Young's inequality $xy\leq x^p/p+y^q/q, p=4/3,q=4$, we get
\begin{align*}
&|\langle c(u-v),u-v\rangle_{U^*,\,U}|\\
&\qquad\leq  \|c\|_{L^2}\|(u-v)^{3/2}(u-v)^{1/2}\|_{L^2}\leq (C_{H^1\to L^6}^\Omega)^{3/2}\|c\|_{L^2}\|u-v\|^{3/2}_{H^1}\|u-v\|^{1/2}_{L^2} \\
&\qquad\leq \frac{3}{4}\(\underline{a}^{3/4}\|u-v\|^{3/2}_{H^1}\)^{4/3}+\frac{1}{4} \(\frac{(C_{H^1\to L^6}^\Omega)^{3/2}}{\underline{a}^{3/4} }\|c\|_{L^2}\|u-v\|^{1/2}\)^4\\
&\qquad=\frac{3\underline{a}}{4}\|u-v\|^2_{H^1}+\frac{(C_{H^1\to L^6}^\Omega)^6}{4\underline{a}^3}\|c\|^4_{L^2}\|u-v\|^2_{L^2},
\end{align*}
where $C_{H^1\to L^6}^\Omega$ indicates the application of the continuous embedding $H^1(\Omega)\embed L^6(\Omega), \Omega\subset\R^3$.
For the nonlinear term $\Phi$ in \eqref{app-3}, one observes
\begin{align*}
\langle\Phi(u)-\Phi(v),u-v\rangle
\begin{cases}
\leq \frac{\underline{a}}{8}\|u-v\|^2_{H^1}+\frac{8(L_\Phi)^2}{\underline{a}}\|u-v\|^2_{L^2} \qquad &\text{case (a)}\\
\geq -M_\Phi\|u-v\|_{L^2}^2 &\text{case (b)}
\end{cases}
\end{align*}
where for the case (a), we have used Young's inequality $xy\leq x^2/\epsilon+\epsilon y^2, \epsilon=\underline{a}/8$. Combining the above estimates of the nonlinearity $\Phi$, the $c$-term to \eqref{energy}, we have that, for all $t\in[0,T]$
\begin{align}\label{energy1}
&\frac{1}{2}\frac{d}{dt}\|u(t)-v(t)\|^2_{L^2} + \frac{\underline{a}}{8}\|u-v\|_{H^1}^2 \leq \langle \Ft(u)-\Ft(v),\(u-v; 0\)\rangle_{U^*\times H,U\times H} +C_{\Phi,R}\|u-v\|^2_{L^2} \nonumber\\
&\frac{d}{dt}\(\frac{1}{2}\|u-v\|^2_{L^2}\) + (\frac{\underline{a}}{8}-\epsilon)\|u(s)-v(s)\|_{H^1}^2\leq \frac{1}{\epsilon}\|\Ft(u)-\Ft(v)\|_{H^{-1}}^2+C_{\Phi,R}\|u-v\|^2_{L^2},
\end{align}
where in the second line, we have applied one more time Young's inequality $ab\leq \epsilon a^2+b^2/\epsilon, \epsilon<\underline{a}/8$ and grouped the L$^2$-terms with the nonnegative constant \[C_{\Phi,R}:=(C_{H^1\to L^6}^\Omega)^6\|c\|^4_{L^2}/(4\underline{a}^3)+8(L_\Phi)^2/\underline{a}+M_\Phi.\]

In the second step, taking the integral of \eqref{energy1} over the time interval $(0,t)$, $t\leq T$ leads to
\begin{align}\label{gronwall}
&\(\frac{\underline{a}}{8}-\epsilon\)\|u-v\|_{L^2(0,t;H^1(\Omega))}^2+\frac{1}{2}\|u(t)-v(t)\|^2_{L^2} \nonumber\\
&\qquad\leq  \frac{1}{\epsilon}\|\Ft(u)-\Ft(v)\|_{L^2(0,t;H^{-1}(\Omega))}^2+\frac{1}{2}\|(u-v)|_{t=0}\|^2_{L^2}+C_{\Phi,R}\int_0^t\|u(s)-v(s)\|^2_{L^2}\,ds  \nonumber\\
&\qquad\leq  \(\frac{1}{\epsilon}+\frac{1}{2}\)\|\Ft(u)-\Ft(v)\|_{\Uc^*\times H}^2+C_{\Phi,R}\int_0^t\|u(s)-v(s)\|^2_{L^2}\,ds.
\end{align}
Now, we apply Gr\"onwall's inequality in the form \[x(t)\leq \Pi+C_{\Phi,R}\int_0^t x(s)\,ds\quad\Rightarrow\quad x(t)\leq \Pi + C_{\Phi,R}\int_0^t\Pi \exp^{C_{\Phi,R}(t-s)}\,ds \quad\forall t\in[0,T]\] 
for \eqref{gronwall}, where $x(t):=\frac{1}{2}\|u(t)-v(t)\|^2_{L^2}$ is the second term on the left hand side and $\Pi:=\(\frac{1}{\epsilon}+\frac{1}{2}\)\|\Ft(u)-\Ft(v)\|_{\Uc^*\times H}^2$. We then get
\begin{align*}
\|u-v\|_{C(0,T;L^2(\Omega))}^2\leq  \(\frac{2}{\epsilon}+1+2C_{\Phi,R}T\exp^{2C_{\Phi,R}T}\)\|\Ft(u)-\Ft(v)\|_{\Uc^*\times H}^2.
\end{align*}
In \eqref{gronwall}, one can moreover bound the term $(\underline{a}/8-\epsilon)\|u-v\|_{L^2(0,T;H^1(\Omega))}$ on the left hand side by the entire right hand side, thus eventually by $\|\Ft(u)-\Ft(v)\|_{\Uc*\times H}$. Altogether, we obtain
\begin{align}\label{verify-coercivity}
\|u-v\|_{C(L^2)\cap\, \Uc}\leq  \widetilde{C}_{\Phi,R}\|\Ft(u)-\Ft(v)\|_{\Uc^*\times H}
\end{align}
with some $\widetilde{C}_{\Phi,R}>0$. Importantly, as the parameter $\theta$ lies in the ball $\Bthe$ with finite radius $R$, the estimate \eqref{verify-coercivity} is locally uniform. If only $(\varphi,u_0)$ are the unknown parameters, the radius $R$ is allowed to be arbitrarily large, as ${C}_{\Phi,R}$, thus $\widetilde{C}_{\Phi,R}$, depends only on $(c,a)$. This shows Lipschitz stability of $u\mapsto\Ft(u)$, equivalently the desired coercivity.
\end{proof}

\begin{remark}[coercivity with $\Vc$-norm]\label{rem-PDEcoercive-V}
By evaluating \[\dot{u}-\dot{v}=\Delta (u-v)- c (u-v) - \Phi(u)+\Phi(v)\] in $L^2(0,T;H^{-1}(\Omega))$-norm, one can from the preceding result immediately deduce the stronger estimate
\begin{align*}
\|u-v\|_{\Vc}\leq  \widetilde{C}_{\Phi,R}\|\Ft(u)-\Ft(v)\|_{\Uc^*\times H}
\end{align*}
for Lipschitz nonlinearity $\Phi$, case (a) in \eqref{app-3}. However, as pointed out in Remark \eqref{rem-derivative}-iii), a $\Uc$-norm  coercivity is sufficient for the bi-level algorithm.
\end{remark}

\begin{remark}[Tangential cone condition]\label{dis:TCC} In view of Corollary \ref{coerive-alpha=1}, the tangential cone condition for the lower-level problem (in $u$) is clearly satisfied for $\Phi(u)=0$. % as $\Ft$ concerns $u$ as its variable, the parameter $\theta$ is fixed. 
When $\Phi(u)$ is as in \eqref{app-3}, as coercivity/Lipschitz stability property is already establish in Proposition \ref{prop-PDEcoercive}, one only needs to assume Lipschitz continuity of its derivative to conclude the tangential cone condition. Thus, convergence of the lower-level iteration is highly achievable.

For the upper-level problem, verification of the tangential condition (in $\theta$) is another technical question. We refer to \cite{TCC21} for a study of such condition in several linear and nonlinear parabolic benchmark problems, to \cite{HoffmanWaldNguyen:2021} for abstract linear elliptic problems, to \cite{HubmerScherzer:TCC18} for quantitative elastography, to \cite{Nakamura:MRE21} for magnetic resonance elastography, to \cite{Kindermann21} for the electrical impedance tomography, to \cite{Rieder:TCC21} for full wave from inversion, %where the later two studies were carried out in semi discrete settings with partial observation data, 
and recently, to \cite{holler22learning_parameter_id}, \cite{ScherzerHofmannNashed}  in the context of machine learning.
\end{remark}

\subsection{Application to magnetic particle imaging}\label{sec:mpi}
This section is dedicated to discussion regarding a real-life application:  magnetic particle imaging (MPI). %, where the proposed bi-level approach has been successfully used. 
The material of the following discussion is extracted from the two papers \cite{KNSW, NguyenWald:2022}. The work in \cite{KNSW} puts forward the mathematical analysis of the parameter identification problem in MPI; subsequently, \cite{NguyenWald:2022} proceeds with Landweber, Landweber-Kaczmarz algorithms and numerical experiments. Moreover, an ad hoc bi-level algorithm, in which lower-level iteration is used as an approximation method for the nonlinear PDE \eqref{mpi-llg}, was successfully implemented without full analysis.

\paragraph{State of the art.}
MPI is a novel medical imaging technique invented by B. Gleich and J. Weizenecker in 2005 \cite{tktb12}. The underlying concept is observing the magnetic behavior of iron oxide nanoparticle tracers, which are injected into patients and respond to an external oscillating magnetic field (with a field-free-point). One can thus visualize the particles' concentration, yielding quantitative imaging of blood vessels. MPI offers favorable features: no harmful radiation, high resolution ($<1$ mm) and fast acquisition ($< 0.1$ s). %As the particles are distributed along the blood stream, this yields an image of the patients' blood flow.  A detailed explanation of MPI physics can be found in the seminal book \cite{tktb12}

%In the first step, magnetic nanoparticles (1-100 nm diameter) are injected into the blood vessel. Next, a strong space-dependent \emph{gradient/selection field} with a \emph{field-free point} (FFP) is applied and thus saturating all particles outside the FFP. Then, FFP is rapidly driven along the scanned object in a predetermined trajectory by a time-dependent \emph{excitation/drive field}. This process causes the particles passing through the FFP to change their magnetization. Subjecting to the Faraday's law of induction, this temporal magnetization change induces an measurable electric voltage in the receive coils of the scanner. . \blue{maybe skip this paragraph}

\paragraph{Parameter identification in MPI.}
%In MPI, % particularly in the modeling phase, 
%an important task is to find the particles' magnetization $\mathbf{m}$, which can realistically reflect the interaction between particles and the delay when aligning to an external field (\emph{relaxation effect}). 
The external magnetic field induces the temporal change $\partial\mathbf{m}/\partial t$ of the particles, which by Faraday's law induces a measurable electric voltage %$v$%. More precisely, the measured voltage $v_{m\ell}$ in the coil $\ell$-th, $\ell=1\ldots L$ generated by different known particle concentrations $c_m, m=1\ldots M$ (e.g. particle concentration at voxel $m$-th in calibration) is described by
\begin{equation}\label{mpi-obs}
 v(t) = \int_0^T \int_{\Omega} -\mu_0 \widetilde{a}_l(t-\tau) c(x) \, \mathbf{p}^{\mathrm{R}}(x) \cdot \frac{\partial}{\partial \tau} \mathbf{m}(x,\tau) \, \mathrm{d} x \, \mathrm{d}=: \mathcal{K}\mv.
\end{equation}
Here, $\mu_0$ is magnetic permeability in vacuum, $\widetilde{a}_l$ is a band-pass filter, $c$ is the known particle concentration and $\mathbf{p}^{\mathrm{R}}$ is the 3D receive coil sensitivity.

 Inspired by \cite{tlg04, llel92}, we use the Landau-Lifshitz-Gilbert (LLG) equation to model the evolution of the magnetization moment vector $\mv$ in a microscopic scale
\begin{align}\label{mpi-llg}
\alpha_1 m_{\mathrm{S}}^2 \frac{d}{dt}\mathbf{m} - \alpha_2 \mathbf{m} \times \frac{d}{dt} \mathbf{m}-m_{\mathrm{S}}^2\Delta \mathbf{m} &= \lvert \nabla \mathbf{m} \rvert^2 \mathbf{m} + m_{\mathrm{S}}^2\mathbf{h} - 
\langle \mathbf{m}, \mathbf{h} \rangle \mathbf{m} \ && \text{in} \ \Omega \times [0,T], \nonumber\\
 0 &= \partial_{\nu} \mathbf{m}  && \text{on} \ \partial\Omega \times [0,T], \\
 \mathbf{m}_0 &= \mathbf{m}(t=0), \ \lvert \mathbf{m}_0 \rvert = m_{\mathrm{S}} && \text{in} \ \Omega\nonumber
\end{align}
with the relaxation parameter $\alpha\in\R^2$, external magnetic field $\mathbf{h}$ and the saturation magnetization constant %of the core material 
$m_S$. %Regarding notation, $\mv_t$ indicates the time derivative of the vector field $\mv$; $\langle \cdot,\cdot\rangle, \times$ are respectively the inner product and cross product. In this vein, we refer to \cite[Section 2]{KNSW} for a detailed explanation of these physical terms well as derivation of different equivalent forms of the LLG equation. 
We highlight that as \eqref{mpi-llg} includes \emph{the relaxation effect} (via $\alpha$ and the cross product) and the \emph{particle interaction} (via $\Delta \mathbf{m}$ and $|\nabla \mathbf{m} \rvert^2 \mathbf{m}$), our approach is  realistic to a greater extent in comparison to simplified models, e.g. the Langevin function.

Combining \eqref{mpi-obs} and \eqref{mpi-llg}, we investigate the inverse problem of identifying $\alpha$ along side the magnetization $\mv$
\begin{align}\label{mpi-ip}
\text{Find }  \alpha=\highlight{(\alpha_1,\alpha_2)}:\quad \mathcal{K}\mv(\alpha)=v\qquad\text{s.t. }\quad \mv(\alpha) \text{ solves LLG }\eqref{mpi-llg}.
\end{align}
The LLG equation \eqref{mpi-llg} is highly nonlinear, with vectorial solution $\mv$; numerical solution of  \eqref{mpi-llg} is a greatly challenging task \cite{alougesKritsikisSteinerToussaint,BanasEtAl,BartelsProhl1, cimrak} (see also \cite[Remark 1]{NguyenWald:2022} for a survey). This initiated our idea of a bi-level approach, in which the lower-level problem is used to approximate $\mv$.% At that time, a theoretical convergence result, which integrates the approximated problem in the lower-level, was not investigated.

We invite the reader to review \cite{KNSW, NguyenWald:2022} for various numerical results. \highlight{To illustrate, the following Figure  shows the state approximation $\m(t,x)$ in the lower-level \cite[Fig.~9]{NguyenWald:2022} and the reconstruction as well as the internal operation of the bi-level scheme  \cite[Fig.~14]{NguyenWald:2022}. The reconstruction with Landweber step size $\mu=1$ ran 250 upper-level iterations and a total of 11095 lower-level iterations over 90,000 seconds. Note that this can be sped up by using the larger step size $\mu=10$. The final state error is below 1\%, and the parameter error is below 2\% \cite[Table 13]{NguyenWald:2022}.} In essence, the proposed bi-level scheme demonstrates its robustness through several test cases with experimental parameters, true physical parameters, noise free and noisy data. \highlight{We are currently carrying out an extended numerical comparison of the three approaches -- reduced, all-at-once and bi-level.}

\begin{figure}[!htb] 
\centering
\includegraphics[width=0.99\textwidth,keepaspectratio]{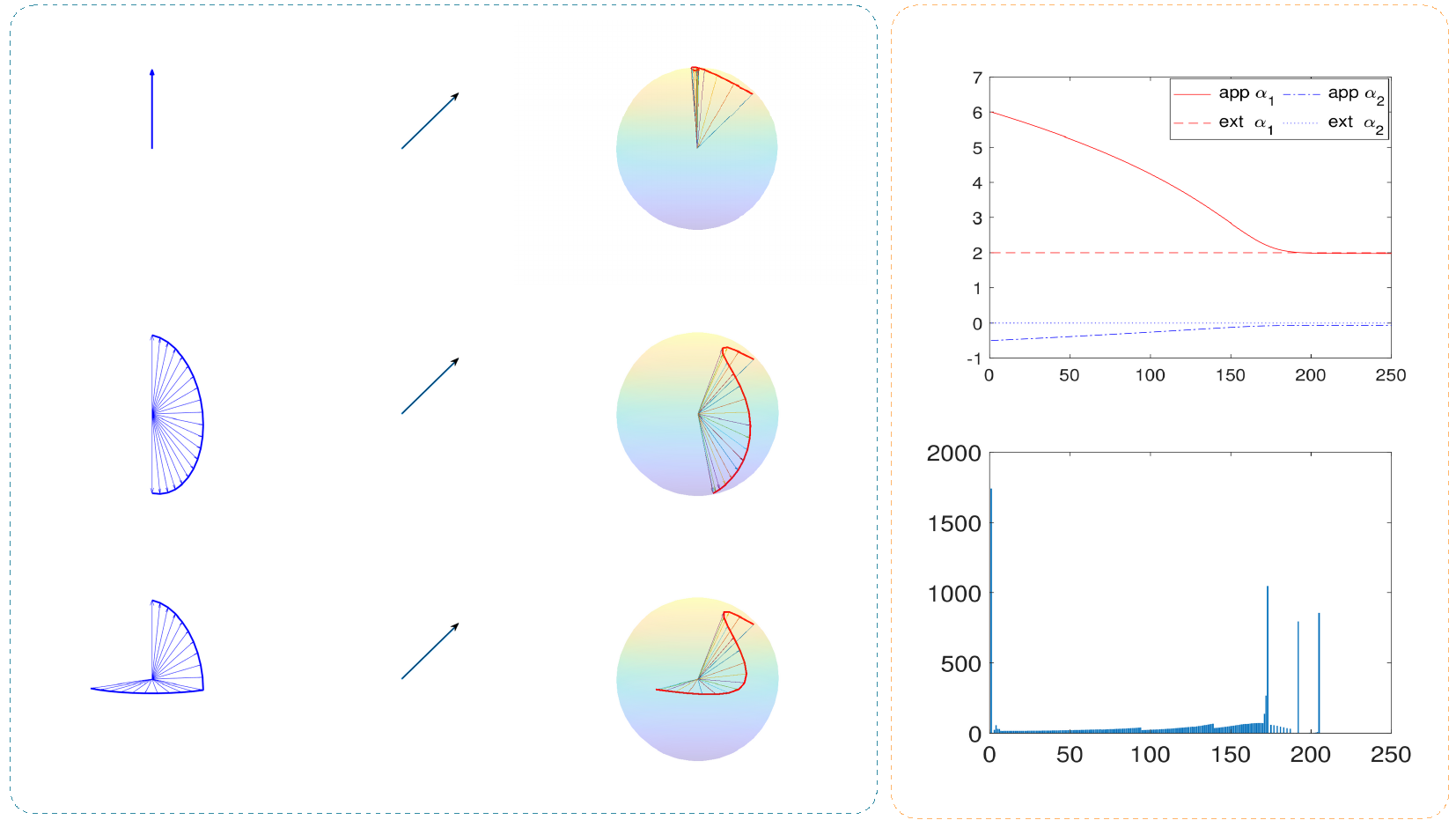}
\caption{\highlight{Left: Lower-level, applied field $\h(t)$ (left), initial state $\m_0$ (middle),  approximate trajectory $\m(t)$ (right). Top right: Bi-level convergence via reconstruction. Bottom right: Number of iterations for bi-level. 250 total upper iterations (x-axis), corresponding number of lower-iterations (y-axis). These figures are taken from \cite[Figs.~9, 14]{NguyenWald:2022}.}}
\end{figure}

\section{Conclusion}\label{sec:conclude}

In this work, we have proposed a bi-level Landweber framework with convergence guarantee for coefficient identification in ill-posed inverse problems. The method is tailored, but not limited, to nonlinear time-dependent PDE models. In addition, the regularization study in the upper-level can be combined with other inexact PDE solvers like reduced order models, principal component analysis etc. to obtain an end-to-end error analysis.

The author intends to extend the study in the following directions:
\begin{itemize}
\item The lower-level could further include a model inexactness  term $\delta_{PDE}$. This reflects the case, for instance, when the PDE is not modeled in ideal conditions, or when a simplified version is employed. In this situation, the PDE itself contains a quantity of inexactness, and the problem becomes ill-posed in both levels.
\item In Algorithm \ref{algorithm}, the lower-level iteration is initiated at some $u_0$ near $u^\dagger$, and the lower-level iterations between different upper-iterates do not communicate with each other. Arguing that a small update in the parameter $\theta$ causes a corresponding small update in the PDE solution $u$, one can initialize the lower-level iteration of step $j+1$ at the last iteration of step $j$, i.e. $u_0^{j+1}:=u^j_{K(j)}$, with the aim of lowering the stopping index $K(j+1)$. Inspired by the all-at-once approach, the author also wishes to investigate the situation where one lower-level iteration is sufficient, i.e. $K(j+1)=K(j)+1$, yielding alternating updates between $u$ and $\theta$.
\end{itemize}

%\item TCC with emplying more properties of PDE like TCC paper

Replacing the Landweber method by its accelerated version \cite{Neubauer17,HubmerRamlau17} or by another iterative method in either or both levels is also a promising research direction.

\paragraph{Acknowledgments.} The author thanks Barbara Kaltenbacher for many valuable comments, and Christian Aarset for thoroughly proofreading the manuscript. The author wishes to thank the reviewers for their insightful suggestions, leading to an improvement of the manuscript. The author acknowledges support from the DFG through Grant 432680300 - SFB 1456 (C04).

\bibliography{lit}{}
\bibliographystyle{plain} 
\end{document}